\setlist{topsep=.5ex, itemsep=.5ex, parsep=0ex, partopsep=0ex}
\newcommand{\CC}{\mathbb{C}}
\newcommand{\ZZ}{\mathbb{Z}}
\newcommand{\RR}{\mathbb{R}}
\newcommand{\FF}{\mathbb{F}}
\newcommand{\calC}{\mathcal{C}}
\newcommand{\calH}{\mathcal{H}}
\newcommand{\boldz}{\mathbf{z}}
\newcommand{\Gr}{\mathrm{Gr}}
\newcommand{\Hom}{\mathrm{Hom}}
\newcommand{\ev}{\mathrm{ev}}
\newcommand{\vspan}{\mathrm{span}}
\newtheorem{lemma}{Lemma}[section]
\newtheorem{theorem}[lemma]{Theorem}
\newtheorem{proposition}[lemma]{Proposition}
\newtheorem{corollary}[lemma]{Corollary}
\newtheorem*{theorem*}{Theorem}
\theoremstyle{definition}
\newtheorem{example}[lemma]{Example}
\newtheorem{definition}[lemma]{Definition}
\newtheorem{remark}[lemma]{Remark}
\newtheorem{convention}[lemma]{Convention}
\numberwithin{equation}{section}
\numberwithin{figure}{section}
\numberwithin{table}{section}
\definecolor{DarkBlue}{rgb}{0, 0.1, 0.55}
\definecolor{DarkRed}{rgb}{0.45, 0, 0}
\newcommand{\defn}[1]{\textbf{\textit{#1}}}
\title{Rigid matroid categories}
\author{Kevin Purbhoo}
\newcommand{\mystar}{\mathchoice
{\mathbin{{\raisebox{.3ex}{\scalebox{.65}{$\bigstar$}}}}}
{\mathbin{{\raisebox{.3ex}{\scalebox{.65}{$\bigstar$}}}}}
{\text{\raisebox{.4ex}{\scalebox{.7}{$\bigstar$}}}}
{\text{\raisebox{.4ex}{\scalebox{.7}{$\bigstar$}}}}}
\newcommand{\mysquare}{\mathchoice
{\mathbin{{\raisebox{.3ex}{\scalebox{.65}{$\blacksquare$}}}}}
{\mathbin{{\raisebox{.3ex}{\scalebox{.65}{$\blacksquare$}}}}}
{\text{\raisebox{.4ex}{\scalebox{.7}{$\blacksquare$}}}}
{\text{\raisebox{.4ex}{\scalebox{.7}{$\blacksquare$}}}}}
\newcommand{\mystarsuperscript}{\text{\scalebox{.7}{$\bigstar$}}}
\newcommand{\rel}[1]{\mapsto_{#1}}
\newcommand{\pow}[1]{2^{#1}}
\newcommand{\symdif}{\mathbin{\triangle}}
\newcommand{\barotimes}{\mathbin{\overline \otimes}}
\newcommand{\Rel}{\mathrm{Rel}}
\newcommand{\Exch}{\mathrm{Exch}}
\newcommand{\rank}{\mathop{\mathrm{rank}}}
\newcommand{\M}{\mathbb{M}}
\newcommand{\range}{\mathrm{range}}
\newcommand{\Ob}{\mathrm{Ob}}
\newcommand{\Mor}{\mathrm{Mor}}
\newcommand{\Type}{\mathrm{Type}}
\newcommand{\matroid}{{\mathbf{Mtd}^\circ}}
\newcommand{\matroidS}{{\mathbf{Mtd}^\circ_S}}
\newcommand{\matroidT}{{\mathbf{Mtd}^\circ_T}}
\newcommand{\matroidSS}{{\mathbf{Mtd}^\circ_{S\otimes S}}}
\newcommand{\Mempty}{{\mathrm{Mtd}^\circ_{\emptyset}}}
\newcommand{\Mpointed}{{\mathbf{Mtd}^\circ_{\{z_0\}}}}
\newcommand{\Bmatroid}{{\mathbf{Mtd}^\bullet}}
\newcommand{\BmatroidS}{{\mathbf{Mtd}^\bullet_S}}
\newcommand{\BmatroidT}{{\mathbf{Mtd}^\bullet_T}}
\newcommand{\Rmatroid}{{R\text{-}\mathbf{Mtd}^{\mystarsuperscript}}}
\newcommand{\Zxymatroid}{{\ZZ[x,y]\text{-}\mathbf{Mtd}^{\mystarsuperscript}}}
\newcommand{\positroid}{{\mathbf{Pos}^\circ}}
\newcommand{\Bpositroid}{{\mathbf{Pos}^\bullet}}
\newcommand{\Rpositroid}{{R\text{-}\mathbf{Pos}^{\mystarsuperscript}}}
\newcommand{\Smatroid}{{\mathbf{SMtd}^\circ}}
\newcommand{\BSmatroid}{{\mathbf{SMtd}^\bullet}}
\newcommand{\RSmatroid}{{R\text{-}\mathbf{SMtd}^{\mystarsuperscript}}}
\newcommand{\ZxySmatroid}{{\ZZ[x,y]\text{-}\mathbf{SMtd}^{\mystarsuperscript}}}
\newcommand{\MConv}{{\mathbf{MConv}^\circ}}
\newcommand{\BMConv}{{\mathbf{MConv}^\bullet}}
\newcommand{\RMConv}{{R\text{-}\mathbf{MConv}^{\mystarsuperscript}}}
\newcommand{\gammoid}{{\mathbf{Gam}^\circ}}
\newcommand{\Bgammoid}{{\mathbf{Gam}^\bullet}}
\newcommand{\Rgammoid}{{R\text{-}\mathbf{Gam}^{\mystarsuperscript}}}
\newcommand{\TNN}{{\mathbf{Mat}^+}}
\newcommand{\HSP}{{\mathbf{HSP}}}
\newcommand{\RMod}{{R\text{-}\mathbf{Mod}}}
\newcommand{\Set}{\mathbf{Set}}
\newcommand{\Vect}{\mathbf{Vect}}
\newcommand{\Alg}{\mathbf{Alg}}
\newcommand{\DGraph}{\mathbf{DGraph}}
\newcommand{\EDGraph}{\mathbf{EDGraph^*}}
\newcommand{\BDGraph}{\mathbf{BDGraph}}
\newcommand{\PDGraph}{\mathbf{PDGraph}}
\newcommand{\PBDGraph}{\mathbf{PBDGraph}}
\newcommand{\FSet}{\mathbf{FSet}}
\newcommand{\TVar}{\mathbf{TVar}}
\newcommand{\Path}{\mathrm{Path}}
\newcommand{\EPath}{\mathrm{EPath}}
\newcommand{\Supp}{\mathrm{Supp}}
\newcommand{\Minors}{\mathrm{Minors}}
\newcommand{\Mat}{\mathrm{Mat}}
\newcommand{\Trans}{\mathrm{Trans}}
\newcommand{\BPath}{\mathrm{BPath}}
\newcommand{\Skel}{\mathrm{Skel}}
\newcommand{\CMA}{\CC^\mathrm{MA}}
\newcommand{\type}[2]{{\langle#1:#2\rangle}}
\newcommand{\totaltype}[2]{{|#1:#2|}}
\newcommand{\lifttype}[2]{{[#1:#2]}}
\newcommand{\hdel}{{\setminus}\!{\setminus}}
\newcommand{\hcon}{/\!/}
\newcommand{\del}{{\setminus}}
\newcommand{\con}{/}
\newcommand{\tla}{{\widetilde \lambda}}
\newcommand{\tnu}{{\widetilde \nu}}
\newcommand{\extalg}{\wedge^\bullet}
\newcommand{\monoidalunit}{\mathbbm{1}}
\newcommand{\varX}{\mathcal{X}}
\newcommand{\varY}{\mathcal{Y}}
\newcommand{\tr}{\mathrm{tr}}
\newcommand{\elementary}[2]{{\epsilon^{#1,#2}}}
\newcommand{\uniform}[2]{{\xi^{#1}_{#2}}}
\newcommand{\partialid}[2]{{\theta^{#1}_{#2}}}
\newcommand{\covergt}{{\eta^\dagger}}
\newcommand{\coverlt}{\eta}
\newcommand{\covercomp}[1]{{\eta^{#1}}}
\begin{document}

\maketitle

%%%%%%%%%%%%%%%%%%%%%%%%%%%%%%%%%%%%%%%%%%%%%%%%%%%%%

\begin{abstract}
We consider three forms of composition of matroids, each of which
extends the category of bimatroids to a rigid monoidal category.
Many well-known constructions are functorial or
defined by morphisms in these categories.  Motivating examples
include: deletion and contraction, 2-sum, series and parallel connections,
the Tutte polynomial, gammoids, positroids, matroids representable
over an infinite field, M-convex sets, and matroids associated to
stable polynomials.
\end{abstract}

%%%%%%%%%%%%%%%%%%%%%%%%%%%%%%%%%%%%%%%%%%%%%%%%%%%%%
%%%%%%%%%%%%%%%%%%%%%%%%%%%%%%%%%%%%%%%%%%%%%%%%%%%%%
%%%%%%%%%%%%%%%%%%%%%%%%%%%%%%%%%%%%%%%%%%%%%%%%%%%%%

\section{Introduction}
\label{sec:intro}

\subsection{Matroids}
\label{sec:intro1}
A matroid is a discrete structure which attempts to axiomatize and
generalize the main combinatorial properties of linearly independent
sets in a vector space.
There are many equivalent ways think about matroids.  They
can be defined axiomatically in terms of their independent sets, 
bases, circuits, flats, rank function, or polytope.
For our purposes, it is most useful to regard a matroid as
being a set of bases.  We begin by recalling the definition.

We will use the following notation throughout.  If $E$ is a finite
set, then $\pow{E}$ denotes the set of all subsets of $E$.  
For $X \in \pow{E}$, $x \in E$, we write 
$X - x := X \setminus \{x\}$ if $x \in X$,
$X + x := X \cup \{x\}$ if $x \notin X$.
%, and $X \pm x := X \symdif \{x\}$
%in either case.
We denote the complement by $\overline X := E \setminus X$.

\begin{definition}
Let $E$ be a finite set, and let  $\alpha \subseteq \pow{E}$ be a 
collection of subsets of $E$.
We say that $\alpha$ 
satisfies the \defn{exchange axiom} if the following condition holds:
For all $X,X' \in \alpha$ and all $x \in X \setminus X'$, there exists 
$x' \in X' \setminus X$ such that $X - x + x' \in \alpha$.
\end{definition}

\begin{definition}
A \defn{matroid} is a pair $(E,\alpha)$ such that:
\begin{enumerate}[(1)]
\item $E$ is a finite set
\item $\alpha \subseteq \pow{E}$ is a collection of subsets of $E$
\item $\alpha$ satisfies the exchange axiom
\item $\alpha \neq \emptyset$.
\end{enumerate}
The elements of $E$ are called the \defn{points} of the matroid $(E,\alpha)$
and the elements of $\alpha$ are called the \defn{bases}.  
We also say that $\alpha$ is a matroid on $E$.
\end{definition}

The prototypical example of a matroid is to take $E$ to be 
a spanning set of vectors in a vector space $V$, 
and $\alpha$ to be the collection of all 
subsets of $E$ that are bases for $V$ (in the sense of linear algebra).  
However, most matroids
are not actually of this form \cite{Nelson}.

For any matroid $(E, \alpha)$, all bases $X \in \alpha$ have the same
size, which is called the \defn{rank} of $\alpha$.  
As the definition indicates, $\alpha = \emptyset$ is normally
not considered to be a matroid on $E$, 
since it is both combinatorially uninteresting (it has no bases), 
and pathological (it does not have a well-defined rank).
However, for our purposes it will be 
necessary to consider it,
and we shall refer to it as the \defn{zero matroid} on $E$.
The zero matroid
should not be confused with the $0$-uniform matroid on $E$,
$\alpha = \{\emptyset\}$, which is the unique matroid on $E$ of rank $0$.

\subsection{Composition}
We study composition of matroids, which is most naturally
discussed in the language of categories.  
We assume familiarity with categories, and
refer the reader to \cite{MacLane} or \cite{nlab} for most definitions.
The idea of forming a category in which the morphisms are matroids
was considered by Kung \cite{Kung}, who introduced the 
\emph{category of bimatroids}.  
Independently, bimatroids were introduced by 
Schrijver \cite{Schrijver}, who called them \emph{linking systems}.
A bimatroid is the discrete 
structure one naturally obtains by axiomatizing 
the combinatorial properties of the non-vanishing minors of a matrix.
This turns out to be equivalent to the data of a matroid equipped with 
a distinguished basis.  
Here, we do not wish to require our matroids to have a distinguished basis, 
and so we consider some slightly more general constructions.

We define three main 
categories: $\matroid$, $\Bmatroid$, $\Rmatroid(x,y)$.  
In each case the objects of the category are finite sets.  
The morphisms of
$\matroid$ and $\Bmatroid$ are matroids, where the points have been
partitioned into two sets: a domain, and a codomain.   
The morphisms of
$\Rmatroid(x,y)$ are formal $R$-linear combinations
of such matroids.  (Here $R$ is an arbitrary commutative ring,
semiring, or monoid,
and $x,y \in R$ are elements.)
Our main theorems assert that these are indeed categories: in each
case, the class of morphisms is closed under composition, and
composition is associative.
Moreover, they are rigid monoidal categories:
informally, \emph{monoidal} means that we have notion 
of ``tensor products'' of matroids, and \emph{rigid} means
there is a canonical way (using morphisms in the category)
to move points from the domain to the codomain, and vice-versa.
In contrast, the category of bimatroids is monoidal but not rigid,
and this poses limitations on the types of constructions and
arguments that are possible with bimatroids.

In each of these categories, composition of morphisms is
defined by an associative composition rule for matroids.  
The composition operations are denoted by $\circ$ in $\matroid$, 
by $\bullet$ in $\Bmatroid$, and by $\mystar$ in
$\Rmatroid(x,y)$.  For bimatroids, there is no difference:
all three operations $\circ$, $\bullet$ and $\mystar$ all
coincide with bimatroid multiplication (when restricted to bimatroids).
However, in general they are not the same: $\circ$ is defined in 
Section~\ref{sec:relations}, as a specialization of
composition of relations, whereas $\bullet$ and $\mystar$ are more 
subtle operations, defined in Sections~\ref{sec:lax} and \ref{sec:R},
respectively.  The relationship between them can be summarized
as follows.  Given composable matroids $\lambda, \mu$, we consider 
a formal power series in two variables:
\[
    \lambda \mysquare \mu := \sum_{k, l \geq 0} 
     (\lambda \bullet_{k,l} \mu) \, x^ky^l
\,,
\]
where $\bullet_{k,l}$ is an operation defined in Section~\ref{sec:lax}.
Then $\lambda \circ \mu$ is the degree-zero term of 
$\lambda \mysquare \mu$, 
while $\lambda \bullet \mu$ is the leading (lowest degree) coefficient, and
$\lambda \mystar \mu$ is the leading term.  Note, however, that $\mysquare$ 
is not the composition operation of any category, so this description
does not help to prove anything.
Nevertheless, we see that $\circ$ should be regarded as a degenerate 
form of $\bullet$,
while $\mystar$ is weighted form of $\bullet$, which interpolates 
between $\bullet$ and $\circ$.  Another analogy is that $\circ$ is
to bases of a matroid, as $\bullet$ is to independent sets, while $\mystar$
keeps track of the difference.
Both $\matroid$ and $\Bmatroid$ are 
special cases of the construction $\Rmatroid(x,y)$, 
for suitably chosen $R,x,y$.

The category $\matroid$ is what we get if we extend the category
of bimatroids to a rigid category, in the most na\"ive way.
It is likely that this idea has been considered (and possibly rejected) 
many times before.  We contend, nevertheless, that this category is 
interesting and well-motivated.  
In particular, Section~\ref{sec:functors} discusses several examples
%Examples~\ref{ex:extalg}, 
%\ref{ex:rigidpath}, \ref{ex:bic}, \ref{ex:positroid} 
%and~\ref{ex:stable}
of functorial constructions
that are not bimatroidal, and illustrate some of the advantages of
having a rigid category.
However, in $\matroid$, we are forced to include zero matroids 
as morphisms in the category, since the $\circ$-composition 
of two non-zero matroids may nevertheless be zero.
$\Bmatroid$, on the other hand, is a (less obvious) extension of the
category of bimatroids which 
also has rigid monoidal structure, and
morphisms are precisely the non-zero matroids.
The fact that $\Bmatroid$ is rigid, allows us to obtain a 
structure theorem on $\bullet$-composition and its relationship to
$\circ$-composition.
Furthermore, the fundamental deletion and contraction operations 
on matroids are realized as morphisms in the category $\Bmatroid$.
As such, 
an example of a morphism in $\Rmatroid(x,y)$ is the Tutte polynomial.

We describe variations on all of these categories for 
various special classes
and generalizations of matroids, including: 
\begin{itemize}
\item matroids representable over an infinite field 
   ($\matroid(\FF), \Bmatroid(\FF), \Rmatroid(\FF; x,y)$);
\item gammoids
   ($\gammoid, \Bgammoid, \Rgammoid(x,y)$);
\item positroids
   ($\positroid, \Bpositroid, \Rpositroid(x,y)$);
%\item $G$-equivariant matroids
   %($\Gequiv, \BGequiv, \RGequiv{x,y}$);
\item symmetric matroids
   ($\Smatroid, \BSmatroid, \RSmatroid(x,y)$);
\item and M-convex sets
   ($\MConv, \BMConv, \RMConv(x,y)$).
\end{itemize}

\subsection{Outline}
In Section~\ref{sec:relations}, we begin
by reformulating the definition of a matroid as a relation
on subsets,  and we list several 
examples that play a key role in the subsequent discussion.
Since relations are composable, 
this reformulation implicitly defines a notion of composition of
matroids, which we use to define the category $\matroid$.
We verify that this category is well-defined, and discuss its
basic properties.  We give several examples of
functors to the category $\matroid$ in Section~\ref{sec:functors}.
These connect with a variety
of topics, including representable matroids, gammoids, positroids,
and stable polynomials.
In Section~\ref{sec:hom} we discuss the covariant hom-functor
on $\matroid$, and other categories and constructions obtained from it.
We consider some examples in Section~\ref{sec:del}; in particular
we see that deletion and contraction are not quite the same as
any of these constructions, though they are closely related.
We also introduce categories of symmetric matroids and M-convex sets.

The category $\Bmatroid$
is defined in Section~\ref{sec:lax}, and is meant to fix the
problem with deletion and contraction.
It is not immediately obvious that the construction gives anything
reasonable.
Here, we state one of our main results, Theorem~\ref{thm:bullet}, 
which asserts that 
$\Bmatroid$ is a category, and furthermore there is a numerical
invariant of $\bullet$-compositions, called the \emph{type}, 
which is well-defined and well-behaved with respect to associativity.
Consequently, $\Bmatroid$ has many of the same properties as $\matroid$.
In Section~\ref{sec:structure},
we state and prove Theorem~\ref{thm:structure}, a structure theorem which
more precisely describes the relationship between $\bullet$ and $\circ$.
As an application, we show that $\bullet$ restricts to 
a well-defined operation on positroids.
The more general category $\Rmatroid$, which interpolates between
$\matroid$ and $\Bmatroid$, is defined in Section~\ref{sec:R},
and we discuss its connection to the Tutte polynomial.  

The proof of Theorem~\ref{thm:bullet} comprises the final 
two sections of the paper.
In Section~\ref{sec:dominant}, we show
that compositions in the category $\Bmatroid$ do indeed have a 
well-defined type; we do this by reducing the problem to a special 
class of morphisms, called \emph{dominant morphisms}, and studying 
their properties.  We deduce that the morphisms of
$\Bmatroid$ are closed under $\bullet$-composition.
Finally, the associativity of $\bullet$ and the properties of
type under association are proved in Section~\ref{sec:assoc}.

It is not necessary to read this paper in linear order.  
Dependencies between sections are shown in the diagram below.  
Solid arrows indicate that the main discussion in the later section
depends on the main discussion in the earlier section.
Dashed arrows mean that some of the examples in the later section
refer back to examples from the earlier section (these 
may be skipped or skimmed without losing the main thread).

\begin{center}
{ \hypersetup{hidelinks}
\begin{tikzpicture}
\tikzset{vertex/.style = {shape=rectangle, draw, fill={Red!10}}}
\tikzset{edge/.style = -{Latex[length=1.2ex]}}
\node[vertex] (S1) at  (0,1.5) {\S\ref{sec:intro1}};
\node[vertex] (S2) at  (2,1.5) {\S\ref{sec:relations}};
\node[vertex] (S3) at  (4,3) {\S\ref{sec:functors}};
\node[vertex] (S4) at  (3,0) {\S\ref{sec:hom}};
\node[vertex] (S5) at  (5,0) {\S\ref{sec:del}};
\node[vertex] (S61) at  (6,1.5) {\S\ref{sec:lax1}};
\node[vertex] (S62) at  (7,0) {\S\ref{sec:lax2}};
\node[vertex] (S7) at  (7,3) {\S\ref{sec:structure}};
\node[vertex] (S8) at  (10,0) {\S\ref{sec:R}};
\node[vertex] (S9) at  (8.5,2) {\S\ref{sec:dominant}};
\node[vertex] (S10) at  (8.5,1) {\S\ref{sec:assoc}};
\draw[edge] (S1) to (S2);
\draw[edge] (S2) to (S3);
\draw[edge] (S2) to (S4);
\draw[edge] (S4) to (S5);
\draw[edge] (S2) to (S61);
\draw[edge,dashed] (S5) to (S61);
\draw[edge] (S5) to (S62);
\draw[edge,dashed] (S3) to (S61);
\draw[edge, dashed, transform canvas ={xshift=-2.5}] (S61) to (S7);
\draw[edge, transform canvas = {xshift=2.5}] (S61) to (S7);
\draw[edge,dashed] (S3) to (S7);
\draw[edge] (S5) to (S62);
\draw[edge] (S61) to (S62);
\draw[edge] (S62) to (S8);
\draw[edge] (S61) to (S9);
\draw[edge] (S61) to (S10);
\end{tikzpicture}
}
\end{center}

\paragraph*{Acknowledgements.}
I thank Jim Geelen, Allen Knutson, Jake Levinson,
Peter Nelson and David Wagner for helpful 
conversations and feedback.
This research was supported by NSERC Discovery Grant RGPIN-04741-2018.

%%%%%%%%%%%%%%%%%%%%%%%%%%%%%%%%%%%%%%%%%%%%%%%%%%%%%
%%%%%%%%%%%%%%%%%%%%%%%%%%%%%%%%%%%%%%%%%%%%%%%%%%%%%
%%%%%%%%%%%%%%%%%%%%%%%%%%%%%%%%%%%%%%%%%%%%%%%%%%%%%
\section{Matroids as relations}
\label{sec:relations}

\begin{convention}
We use \defn{diagrammatic order} for all compositions.
In any category $\calC$, if $A,B,C \in \Ob_\calC$ are objects, 
and $\phi \in \Mor_\calC(A,B)$, $\psi \in \Mor_\calC(B,C)$ 
are morphisms, then the composition of $\phi$ and
$\psi$ is denoted $\phi \circ \psi \in \Mor_\calC(A,C)$.
If $\phi: A \to B$ and $\psi : B \to C$ are functions, 
$\phi \circ \psi : A \to C$
denotes the function $x \mapsto \psi(\phi(x))$.
\end{convention}

We will be working extensively with sets 
that are disjoint unions of other sets.
If $S_1, \dots, S_n$ are finite sets, 
the disjoint union is formally
$S = S_1 \sqcup \dots \sqcup S_n := \bigcup_{i=1}^n S_i \times \{i\}$.
However, for ease of notation, we implicitly identify 
$S_i$ and $S_i \times \{i\}$, e.g. writing
$x \in S$ when we mean $(x,i) \in S$ for some $i$.
We also identify 
$\pow{S} \equiv \pow{S_1} \times \dots \times \pow{S_n}$.
Specifically, for $X \in \pow{S}$, we identify
$X \equiv (X_{S_1}, \dots, X_{S_n}) \equiv X_{S_1} \sqcup \dots \sqcup X_{S_n}$
where $X_{S_i} := X \cap S_i$, and 
$\overline X \equiv (\overline X_{S_1}, \dots, \overline X_{S_n}) \equiv
\overline X_{S_1} \sqcup \dots \sqcup \overline X_{S_n}$
where $\overline X_{S_i} := S_i \setminus X$.

\begin{definition}
Let $\Rel(S_1, S_2) = \pow{S_1 \times S_2}$ 
denote the set of all (binary) relations on 
$S_1 \times S_2$.
If $\lambda \in \Rel(S_1, S_2)$, we use the notation
$x_1 \rel\lambda x_2$ to mean $x_2 \in S_1$ is related to $x_2 \in S_2$
under $\lambda$, i.e. $(x_1, x_2) \in \lambda$.
\begin{itemize}
\item
$\Rel(S_1,S_2)$ is a partially ordered set: for
$\lambda, \lambda' \in \Rel(S_1, S_2)$, 
we write $\lambda \leq \lambda'$ if $x_1 \rel\lambda x_2$ implies
$x_1 \rel{\lambda'} x_2$.
\item
The \defn{adjoint relation} $\lambda^\dagger \in \Rel(S_2, S_1)$ is 
characterized by $x_2 \rel{\lambda^\dagger} x_1$ if and only if $x_1 \rel\lambda x_2$.
\item
The \defn{range} of $\lambda$ is the set
  $\range(\lambda) := 
   \{x_2 \in S_2 \mid \text{$\exists x_1 \in S_1$: }x_1 \rel\lambda x_2\}$,
and the \defn{corange} of $\lambda$ is $\range(\lambda^\dagger)$.
\item
If $\lambda \in \Rel(S_1, S_2)$, $\nu \in \Rel(S_3,S_4)$, the 
\defn{product relation} 
$\lambda \times \nu \in \Rel(S_1 \times S_3, S_2 \times S_4)$ is 
defined by $(x_1, x_3) \rel{\lambda \times \nu} (x_2,x_4)$
if and only if $x_1 \rel\lambda x_2$ and $x_3 \rel\nu x_4$.
\item
For $\lambda \in \Rel(S_1, S_2)$, $\mu \in \Rel(S_2,S_3)$
the \defn{composition} 
$\lambda \circ \mu \in \Rel(S_1, S_3)$ is the relation
defined by $x_1 \rel{\lambda \circ \mu} x_3$ if and only
if there exists $x_2 \in S_2$ such that $x_1 \rel\lambda x_2$ and
$x_2 \rel\mu x_3$.
\end{itemize}
\end{definition}

\begin{definition}
Let $A,B$ be finite sets. 
Let $\lambda \in \Rel(\pow{A} ,\pow{B})$.  Given
$X \rel \lambda Y$, $u,v \in A \sqcup B$, we say that $u$ and $v$
are \defn{exchangeable} in $X \rel \lambda Y$,
if either $u = v$, or
one of the following is true:
\begin{multicols}{2}
\begin{itemize}
\item  $u \in X$, $v \in Y$, and $X-u \rel \lambda Y-v$
\item  $u \in \overline X$, $v \in \overline Y$, and $X+u \rel \lambda Y+v$
\item  $u \in Y$, $v \in X$, and $X-v \rel \lambda Y-u$ 
\item  $u \in \overline Y$, $v \in \overline X$, and $X+v \rel \lambda Y+u$
\item  $u \in X$, $v \in \overline X$, and $X-u+v \rel \lambda Y$
\item  $u \in \overline X$, $v \in X$, and $X+u-v \rel \lambda Y$
\item  $u \in Y$, $v \in \overline Y$, and $X \rel \lambda Y-u+v$
\item  $u \in \overline Y$, $v \in  Y$, and $X \rel \lambda Y+u-v$.
\end{itemize}
\end{multicols}
\end{definition}

The definition of exchangeability is symmetric in $u$ and $v$,
in $X$ and $Y$, and in ``$+$'' and ``$-$''.
Note the pattern in the list of cases above.  In all cases, the
first two conditions indicate which of the four sets 
$X, Y, \overline X, \overline Y$ contain $u$ and $v$.  The
third condition is of the form $X' \rel \lambda Y'$ where $X', Y'$
are obtained by adding or deleting $u$ and $v$ to or from $X$ or $Y$,
as appropriate, as determined by the first two conditions.  
The list consists of all combinations for
which we have $|Y'|-|X'| = |Y| - |X|$.

\begin{definition}
A relation $\lambda \in \Rel(\pow{A}, \pow{B})$ 
satisfies the \defn{exchange axiom} if for
all $X \rel \lambda Y$,  $X' \rel\lambda Y'$, 
and $u \in \overline X \sqcup Y$ there exists 
$v \in \smash{\overline X}' \sqcup Y'$ such that $u$ and $v$ are exchangeable 
in $X \rel \lambda Y$.
(Equivalently, for all $X \rel \lambda Y$, $X' \rel\lambda Y'$, 
and $u \in X \sqcup \overline Y$ there exists 
$v \in X' \sqcup \smash{\overline Y}'$ such that $u$ and $v$ are exchangeable 
in $X \rel \lambda Y$.)
Let $\Exch(\pow{A}, \pow{B}) \subseteq \Rel(\pow{A}, \pow{B})$ denote
the subset of all relations on $\pow{A} \times \pow{B}$
which satisfy the exchange axiom.
\end{definition}

The exchange axiom for matroids and for relations are of course
closely related.  
Let $\M(E)$ denote the set of all matroids on $E$, 
including the zero matroid.
For a relation $\lambda \in \Rel(\pow A, \pow B)$,
let 
\[
 \alpha_\lambda := 
   \{\overline X \sqcup Y \mid X \rel \lambda Y\} 
\,.
\]
Unpacking the definitions, we find that
$\lambda \in \Exch(\pow{A}, \pow{B})$ if and only if
 $\alpha_\lambda \in \M(A \sqcup B)$.
Thus, the correspondence $\lambda \leftrightarrow \alpha_\lambda$
defines a bijection 
$\M(A \sqcup B) \leftrightarrow \Exch(\pow{A}, \pow{B})$,
and $\alpha_\lambda$ is called the \defn{associated matroid} of $\lambda$.
We define the \defn{degree} of $\lambda$ to be 
$\deg(\lambda) := \rank(\alpha_\lambda) - |A|$.  Equivalently, for 
all $X \rel \lambda Y$, we have $|Y| - |X| = \deg(\lambda)$.

In particular for any finite set $E$, we canonically identify 
the set of matroids
$\M(E)$  with the set of relations $\Exch(\pow{\emptyset}, \pow{E})$.
With this identification, the rank of a matroid is equal to the degree 
of the corresponding relation.

\begin{example}
The \defn{zero relation} 
$0_{AB} \in \Rel(\pow{A} , \pow{B})$ is the relation
for which $X \not \rel{0_{AB}} Y$ for all $X \in \pow{A}$, $Y \in \pow{B}$.
Vacuously, we have $0_{AB} \in \Exch(\pow{A}, \pow{B})$.  
The associated matroid of $0_{AB}$ is
the zero matroid on $A \sqcup B$, and hence $\deg(0_{AB})$ is undefined.
\end{example}

\begin{example}
The \defn{identity relation} 
$1_A \in \Rel(\pow{A} , \pow{A})$ is the relation
for which $X \rel{1_A} Y$ if and only if $X=Y$.
It is straightforward to verify that $1_A \in \Exch(\pow{A}, \pow{A})$.
For any finite set $A$, we have $\deg(1_A) = 0$,
\end{example}

\begin{example}
For $P \subseteq A$, $Q \subseteq B$, the \defn{elementary relation}
$\elementary{P}{Q} \in \Rel(\pow{A}, \pow{B})$ is the relation
such that $X \rel{\elementary{P}{Q}} Y$ if and only if $X=P$ and $Y = Q$.
Then $\elementary{P}{Q} \in \Exch(\pow{A}, \pow{B})$, and
$\deg(\elementary{P}{Q}) = |Q| - |P|$.
\end{example}

\begin{example}
\label{ex:covering}
For a finite set $A$, the \defn{covering relation} 
$\coverlt \in \Rel(\pow{A}, \pow{A})$ is the
relation such that $X \rel{\coverlt} Y$ if and only if $X \subseteq Y$,
$|X| = |Y|-1$.  Then $\coverlt \in \Exch(\pow{A}, \pow{A})$, and
$\deg(\coverlt) = 1$. 
\end{example}

\begin{example}
\label{ex:partialidentity}
Given disjoint subsets $P,Q \subseteq A$,
the \defn{partial identity relation} 
$\partialid{P,Q}{A} \in \Rel(\pow A, \pow A)$ is the relation
such that $X \rel{\partialid{P,Q}{A}} Y$ if and only if 
$P = Y \setminus X$ and $Q = X \setminus Y$.
We have $\partialid{P,Q}{A} \in \Exch(\pow A, \pow A)$, and
$\deg(\partialid{P,Q}{A}) = |P|-|Q|$.
\end{example}

\begin{example}
For any $k \in \ZZ$, the degree-$k$ \defn{uniform relation}
$\uniform{k}{AB} \in \Rel(\pow{A}, \pow{B})$ is the relation defined by
$X \rel{\uniform{k}{AB}} Y$ if and only if $|Y| - |X| = k$.
We have $\uniform{k}{AB} \in \Exch(\pow{A}, \pow{B})$.
If $-|A| \leq k \leq |B|$ then
$\uniform{k}{AB} \neq 0_{AB}$ and $\deg(\uniform{k}{AB}) = k$.
The associated matroid of $\uniform{k}{AB}$ is the uniform matroid
of rank $|A|+k$ on $A \sqcup B$.
\end{example}

\begin{example}
If $\lambda \in \Exch(\pow{A} , \pow{B})$ and 
$\mu \in \Exch(\pow{C},\pow{D})$, then the product $\lambda \times \mu$ 
also satisfies the exchange axiom:
$\lambda \times \mu \in \Exch(\pow{A \sqcup C}, \pow{B \sqcup D})$,
and $\deg(\lambda \times \mu) = \deg(\lambda) + \deg(\mu)$.
Here we are implicitly using the identifications
$\pow{A} \times \pow{C} = \pow{A \sqcup C}$ and
$\pow{B} \times \pow{D} = \pow{B \sqcup D}$.
\end{example}

\begin{example}
For any matroid $(E, \alpha)$, 
$\overline{\alpha} := \{\overline X \mid X \in \alpha\}$ is
also a matroid on $E$, called the \defn{dual matroid} of $\alpha$.
Since $\alpha_{\lambda^\dagger} = \overline{\alpha_\lambda}$,
we have that $\lambda \in \Exch(\pow{A}, \pow{B})$
if and only if $\lambda^\dagger \in \Exch(\pow{B}, \pow{A})$,
and $\deg(\lambda^\dagger) = - \deg(\lambda)$.
\end{example}

\begin{example}
Similarly, for $\lambda \in \Exch(\pow{A}, \pow B)$, let
$\overline{\lambda}$ be the relation defined by 
$X \rel {\overline \lambda} Y$ if and only if 
$\overline{X} \rel \lambda \overline{Y}$.  
Since $\alpha_{\overline\lambda} = \alpha_{\lambda^\dagger}$,
we have $\overline{\lambda} \in \Exch(\pow{A}, \pow{B})$,
and $\deg(\overline \lambda) = |B| - |A| -\deg(\lambda)$.
\end{example}

\begin{example}
A \defn{bimatroid} 
is a relation $\lambda \in \Exch(\pow A, \pow B)$
such that $\emptyset \rel\lambda \emptyset$.  Every bimatroid
is non-zero, and has degree $0$.  
Bimatroids have the following special property:
if $\lambda \in \Exch(\pow A, \pow B)$ 
is a bimatroid, then $\range(\lambda)$ is the collection of
independent sets of a matroid on $B$, and $\range(\lambda^\dagger)$
is the collection of independent sets of a matroid on $A$.
The associated matroid $\alpha_\lambda$ has a distinguished basis,
namely $A \in \alpha_\lambda$, and a bimatroid is equivalent
to the data of a matroid equipped with a distinguished basis.
\end{example}

The following theorem is proved in \cite{Kung, Schrijver} for bimatroids.

\begin{theorem}
\label{thm:exchcomposition}
For finite sets $A,B,C$, if
$\lambda \in \Exch(\pow A, \pow B)$ and $\mu \in \Exch(\pow B, \pow C)$ then
$\lambda \circ \mu \in \Exch(\pow A,\pow C)$.
\end{theorem}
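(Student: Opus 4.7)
My plan is to reformulate everything in terms of the associated matroids. As noted in the paper, $\lambda \in \Exch(\pow A, \pow B)$ if and only if $\alpha_\lambda$ is a matroid on $A \sqcup B$ (possibly the zero matroid), and the relation exchange axiom coincides with the matroid basis exchange axiom for $\alpha_\lambda$. Hence it suffices to show that $\alpha_{\lambda \circ \mu}$ is a matroid (possibly zero) on $A \sqcup C$. Unwinding the definition of $\circ$,
\[
\alpha_{\lambda \circ \mu} = \{\overline X \sqcup Z \mid \text{there exists } Y \subseteq B \text{ with } \overline X \sqcup Y \in \alpha_\lambda \text{ and } \overline Y \sqcup Z \in \alpha_\mu\}.
\]
Every element of this set has the common cardinality $\rank(\alpha_\lambda) + \rank(\alpha_\mu) - |B|$, so the common-cardinality precondition for a matroid is automatic. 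If the set is empty, we take the zero matroid on $A \sqcup C$; otherwise we must verify basis exchange.

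Given two bases $\gamma = \overline X \sqcup Z$ and $\gamma' = \overline{X'} \sqcup Z'$ of $\alpha_{\lambda \circ \mu}$ with a point $u \in \gamma$, the duality $(\lambda \circ \mu)^\dagger = \mu^\dagger \circ \lambda^\dagger$ (which swaps the roles of $A$ and $C$) allows us to assume $u \in A$. If $u \in \gamma'$ we set $v = u$, so we further assume $u \in \overline X \cap X'$. Among all pairs of intermediate witnesses $(Y, Y')$ for $(\gamma, \gamma')$ I would choose one minimizing $|Y \symdif Y'|$. Applying basis exchange in $\alpha_\lambda$ to the bases $\overline X \sqcup Y$ and $\overline{X'} \sqcup Y'$ with the element $u$, I obtain $w \in (\overline{X'} \sqcup Y') \setminus (\overline X \sqcup Y)$ with $(\overline X \sqcup Y) - u + w \in \alpha_\lambda$.

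The proof then splits on $w$. If $w \in A$, then $w \in X \setminus X'$ and $(X + u - w) \rel\lambda Y \rel\mu Z$, so $v := w$ witnesses the exchange in $\alpha_{\lambda \circ \mu}$. If $w \in Y' \setminus Y \subseteq B$, I apply basis exchange in $\alpha_\mu$ to $\overline Y \sqcup Z$ and $\overline{Y'} \sqcup Z'$ with $w$, yielding $w' \in (\overline{Y'} \sqcup Z') \setminus (\overline Y \sqcup Z)$ with $(\overline Y \sqcup Z) - w + w' \in \alpha_\mu$. If $w' \in C$, then $(X + u) \rel\lambda (Y + w) \rel\mu (Z + w')$ yields the exchange via $v := w'$.

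The main obstacle is the residual sub-case $w' \in Y \setminus Y' \subseteq B$. Here I have $(Y + w - w') \rel\mu Z$; setting $\widetilde Y := Y + w - w'$ yields $|\widetilde Y \symdif Y'| = |Y \symdif Y'| - 2$, so if I can also show $X \rel\lambda \widetilde Y$ the minimality of $|Y \symdif Y'|$ is contradicted. Establishing $X \rel\lambda \widetilde Y$ is the delicate point: a single application of basis exchange in $\alpha_\lambda$ with $w'$ does not necessarily pair it with $w$. One resolution is to invoke the symmetric (or strong) basis exchange axiom for matroids so as to force the pair $(w, w')$ to exchange simultaneously in both $\alpha_\lambda$ and $\alpha_\mu$; another is to induct on $|Y \symdif Y'|$, reducing to the base case $Y = Y'$ in which basis exchange in $\alpha_\lambda$ must produce an element of $A$, so that the first case applies. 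This technical heart of the argument is essentially the one carried out by Kung and Schrijver for bimatroids, adapted to exchange relations that need not have a distinguished basis and may degenerate to the zero matroid.
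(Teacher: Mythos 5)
Your reduction to the associated matroids, the choice of a witness pair $(Y,Y')$ minimizing $|Y \symdif Y'|$, and the two easy cases ($w \in A$, and $w \in Y'\setminus Y$ followed by $w' \in C$) are all sound, but the proof is not complete: the residual sub-case $w' \in Y\setminus Y'$ is the entire technical content of the theorem, and neither of your proposed fixes closes it. Symmetric (strong) basis exchange is a statement about two bases of a \emph{single} matroid; here the obstruction is to coordinate the swap of the pair $(w,w')$ simultaneously in the two different matroids $\alpha_\lambda$ and $\alpha_\mu$, which interact only through the shared ground set $B$, and no exchange axiom guarantees that the same pair works in both. The suggested induction on $|Y \symdif Y'|$ also does not reduce: in the residual sub-case you know $\widetilde Y = Y+w-w'$ satisfies $\widetilde Y \rel\mu Z$, but you have no information that $X \rel\lambda \widetilde Y$, so $(\widetilde Y, Y')$ need not be a witness pair; hence no smaller instance is produced and no contradiction with minimality arises. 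The base case $Y=Y'$ is fine, but the reduction step is exactly the unproven claim, and appealing to Kung/Schrijver does not help, since their results are for bimatroids and the adaptation to arbitrary (possibly zero) exchange relations is what is to be proved here.

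The paper's proof shows what is actually needed: it does not try to complete the exchange in a single pass ``one step in $\lambda$, one step in $\mu$.'' Instead it constructs sequences $(X_n,Y_n,Z_n,u_n)$ in which each application of the relation exchange axiom is spread over two consecutive steps (so the invariant maintained is strictly weaker than having both $X_n \rel\lambda Y_n$ and $Y_n \rel\mu Z_n$ at every stage), the exchange partner is always sought in $\overline{X'} \sqcup Y'$ or $\overline{Y'} \sqcup Z'$, and the active element may travel through $B$ many times; termination comes from the fact that the middle steps strictly decrease $|Y_n \symdif Y'|$. To repair your argument you would need an iterative (augmenting-path style) construction of this kind in place of the single $\lambda$-then-$\mu$ exchange. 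A separate, minor point: the adjoint $(\lambda\circ\mu)^\dagger = \mu^\dagger\circ\lambda^\dagger$ has associated matroid $\overline{\alpha_{\lambda\circ\mu}}$, the dual, so your ``WLOG $u \in A$'' step either needs the (standard but not free) equivalence of the exchange axiom for a family and for its complement family, or is cleaner via $\overline{\mu}^{\dagger}\circ\overline{\lambda}^{\dagger}$, whose associated matroid is literally $\alpha_{\lambda\circ\mu}$ with domain and codomain interchanged.
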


\begin{proof}
Suppose $X \rel{\lambda \circ \mu} Z$, and $X' \rel{\lambda \circ \mu} Z'$,
and let $u \in \overline X \cap Z$.  We must show that there exists
$v \in \overline {X'} \cap Z'$, exchangeable with $u$ in
$X \rel{\lambda \circ \mu} Z$.  By definition of composition
there exist $Y, Y' \subseteq B$ such that
\[
  X \rel\lambda Y \rel\mu Z \qquad
   \text{and}\qquad
  X' \rel\lambda Y' \rel\mu  Z'
\]
We now construct sequences $(X_n)_{n \geq 0}$,
$(Y_n)_{n \geq 0}$, $(Z_n)_{n \geq 0}$, $(u_n)_{n \geq 0}$ as follows.
Let $X_0 = X$, $Y_0 = Y$ $Z_0 = Z$, $u_0 = u$.  Recursively define
$X_{n+1}, Y_{n+1}, Z_{n+1}, u_{n+1}$ using the exchange axiom
repeatedly, according to the following rules.
\begin{enumerate}
\item[(1)]  If $u_n \in X_n$ let $u_{n+1} = u_n$,
$X_{n+1} = X_n-u_n$, $Y_{n+1} = Y_n$, $Z_{n+1} = Z_n$.

\item[($\overline 1$)]  If $u_n \in \overline{X}_n$, 
find $u_{n+1} \in \smash{\overline X}' \sqcup Y'$ 
exchangeable for $u_n$ in $X_n \rel\lambda Y_n$.  Let 
$X_{n+1} = X_n+u_n$, $Y_{n+1} = Y_n$, $Z_{n+1} = Z_n$.

\item[(2)]  If $u_n \in Y_n$,
find $u_{n+1}
\in \smash{\overline X}' \sqcup Y'$ exchangeable for $u_n$ in 
 $X_n \rel\lambda Y_n$.  Let 
$X_{n+1} = X_n$, $Y_{n+1} = Y_n-u_n$, $Z_{n+1} = Z_n$.

\item[($\overline{2}$)]  If $u_n \in \overline{Y}_n$, 
find
$u_{n+1} \in \smash{\overline Y}' \sqcup Z'$ 
exchangeable for $u_n$ in $Y_n \rel\mu Z_n$.  Let 
$X_{n+1} = X_n$, $Y_{n+1} = Y_n+u_n$, $Z_{n+1} = Z_n$.

\item[(3)]  If $u_n \in Z_n$,
find $u_{n+1}
\in \smash{\overline Y}' \sqcup Z'$ exchangeable for $u_n$ in 
 $Y_n \rel\mu Z_n$.  Let 
$X_{n+1} = X_n$, $Y_{n+1} = Y_n$, $Z_{n+1} = Z_n-u_n$.

\item[($\overline{3}$)]  If $u_n \in \overline{Z}_n$ let $u_{n+1} = u_n$,
$X_{n+1} = X_n$, $Y_{n+1} = Y_n$, $Z_{n+1} = Z_n+u_n$.
\end{enumerate}
Note that each exchange is completed over two consecutive steps.  
Thus, we do not have $X_n \rel\lambda Y_n$ and 
$Y_n \rel \mu Z_n$ for all $n$, but the former holds in cases 
(1), ($\overline 1$), (2), (3),
and the latter holds in cases
($\overline 1$), ($\overline 2$), (3), $(\overline 3)$, so
it is always possible to find $u_{n+1}$.

At $n=0$, we begin in case ($\overline{1}$) or (3), and we can only
return to these cases by first reaching case (1) or ($\overline{3}$).
In cases (2) and ($\overline 2$) we have
$|Y_{n+1} \symdif Y'| < |Y_n \symdif Y'|$, so these cases can only
occur finitely many times.
Thus for some $n$, we must reach case (1) or ($\overline 3$), at which 
point $v = u_{n+1}$ has the desired properties.  
\end{proof}

In light of Theorem~\ref{thm:exchcomposition}, we can define a
(locally small)
category in which the morphisms are relations which satisfy the
exchange axiom.

\begin{definition}
We define a category $\matroid$, in which the
objects, morphisms, and composition rule are defined as follows. 
$\Ob_\matroid$ is the class of
finite sets. $\Mor_\matroid$ is the class of relations which
satisfy the exchange axiom.  More precisely, for finite sets 
$A,B \in \Ob_\matroid$, $\Mor_\matroid(A,B) = \Exch(\pow{A}, \pow{B})$.
Composition of morphisms is defined to be composition of relations.
We also define a functor 
$\otimes : \matroid \times \matroid \to \matroid$.
For objects $A,B \in \Ob_\matroid$, 
$A \otimes B := A \sqcup B$;  for morphisms 
$\lambda, \mu \in \Mor_\matroid$,
$\lambda \otimes \mu := \lambda \times \mu$.
\end{definition}

\begin{remark}
\label{rmk:properties}
We list a few elementary properties of the category $\matroid$.

\begin{enumerate}
\item The identity relations $1_A \in \Mor_\matroid(A,A)$ are 
identity morphisms, and the zero relations $0_{AB} \in \Mor_\matroid(A,B)$ 
are zero morphisms.

\item $(\matroid, \otimes)$ is a symmetric monoidal category.  
The monoidal unit object is $\emptyset \in \Ob_\matroid$.

\item $\matroid$ is a $\dagger$-category.  That is, 
$\lambda \mapsto \lambda^\dagger$
defines a involutive contravariant functor from $\matroid$ to itself.

\item Similarly $\lambda \mapsto \overline{\lambda}$ defines an
involutive covariant functor from $\matroid$ to itself.

\item A morphism $\lambda \in \Mor_\matroid(A,B)$ is an isomorphism
if and only if there exists a bijection $\phi : A \to B$ such that
$X \rel\lambda Y$ if and only if $Y = \phi(X)$.  If $\lambda$
is an isomorphism then $\lambda^\dagger$ is its inverse.

\item $(\matroid, \otimes)$ is a rigid category.  The precise definition
of a rigid category $(\calC, \otimes)$ 
is less strict than what we state below, 
but for our purposes this means:
\begin{enumerate}[(a)]
\item $(\calC, \otimes)$ is a monoidal category, with monoidal
unit object $\monoidalunit_\calC \in \Ob_\calC$.
\item For every object $A \in \Ob_\calC$, 
there is a dual object $A^\dagger \in \Ob_\calC$
such that $(A^\dagger)^\dagger = A$.
\item For every object $A \in \Ob_\calC$, 
we have evaluation and coevaluation morphisms 
$\ev_A \in \Mor_\calC(A \otimes A^\dagger, \monoidalunit_\calC)$, and
$\ev_A^\dagger \in \Mor_\calC(\monoidalunit_\calC, A \otimes A^\dagger)$ 
satisfying
\begin{equation}
\label{eq:rigidity}
   (1_A \otimes \ev_{A^\dagger}^\dagger) \circ (\ev_A \otimes 1_A) = 
   (\ev_A^\dagger \otimes 1_A) \circ (1_A \otimes \ev_{A^\dagger})  = 1_A
\,.
\end{equation}
\end{enumerate}

In the case of $\calC = \matroid$,
each object $A = A^\dagger$ is its own dual; the \defn{evaluation morphism}
$\ev_A \in \Mor_\matroid(A \otimes A, \emptyset)$ is
the relation defined by $(X,Y) \rel{\ev_A} \emptyset$ if and only if 
$X = \overline Y$; the \defn{coevaluation morphism} $\ev_A^\dagger$ is the
adjoint of $\ev_A$.
\end{enumerate}
\end{remark}

\begin{remark}
Given a morphism $\lambda \in \Mor_\matroid(A,B)$, 
evaluation and coevaluation morphisms are used to 
move points between the domain and the codomain of $\lambda$.  
Suppose $S \subseteq A$, and let $A' = A \setminus S$.  Then
\[
\lambda' = (\ev_S^\dagger \otimes 1_{A'}) \circ (1_S \otimes \lambda) 
\in \Mor_\matroid(A',S \otimes B)
\]
is a morphism for which the points of
$S$ have been transferred from the domain to the codomain.
Similarly if $T \subseteq B$, $B'' = B \setminus T$, then
\[
\lambda'' = (1_B \otimes \lambda)  \circ (\ev_T \otimes 1_{B''})
\in \Mor_\matroid(T \otimes A,B'')
\]
is a morphism for which the points of $T$ have been
transferred from the domain to the codomain.
By \eqref{eq:rigidity}, these constructions are mutually inverse.
The three morphisms $\lambda, \lambda', \lambda''$ are all related
by the fact that they have the same associated matroid:
$\alpha_\lambda = \alpha_{\lambda'} = \alpha_{\lambda''} \in \M(A \sqcup B)$.
\end{remark}

\begin{remark}
In most references on matroids, the operation $\otimes$ is
thought of as a ``direct sum'' of matroids, and is traditionally 
denoted by the symbol ``$\oplus$''.  However,
in our discussion, 
it makes more sense to think of it as a kind of tensor product.
For example, consider the unit object $\emptyset$.  
For every finite set $E$, $\Mor_\matroid(\emptyset,E) = \M(E)$ 
contains a zero morphism and at least one non-zero morphism 
(several if $|E| > 0$);
in this respect, $\emptyset$ behaves like a $1$-dimensional vector
space under $\otimes$, not like a $0$-dimensional vector space
under $\oplus$.
Formulas such as \eqref{eq:rigidity} look bizarre 
if $\otimes$ is replaced by $\oplus$.  
\end{remark}

%\begin{example}
%We give a first example of a variant on the category $\matroid$.
%Let $G$ be a group, and suppose $A, B$ are finite $G$-sets 
%(i.e. finite sets equipped with a left $G$-action).
%We have an induced $G$-action on $\Mor_\matroid(A,B)$: for
%$g \in G$, $\lambda \in \Mor_\matroid(A,B)$,  
%$g\lambda \in \Mor_\matroid(A,B)$
%is defined by $X \rel\lambda Y$ if and only if $g X \rel{g \lambda} gY$.
%A morphism $\lambda \in \Mor_\matroid(A,B)$ is 
%\defn{$G$-equivariant} if for every $g \lambda = \lambda$ for all 
%$g \in G$.
%Since the class of $G$-equivariant morphisms is closed under composition,
%we can form a category
%$\Gequiv$ in which $\Ob_\Gequiv$ is the class of finite $G$-sets
%and for $A, B \in \Ob_\Gequiv$, 
%$\Mor_\Gequiv(A,B)$ is the set of all $G$-equivariant morphisms
%in $\Mor_\matroid(A,B)$.  Since evaluation and coevaluation morphisms
%are $G$-equivariant, $(\Gequiv, \otimes)$ is a rigid category.
%\end{example}

%%%%%%%%%%%%%%%%%%%%%%%%%%%%%%%%%%%%%%%%%%%%%%%%%%%%%
%%%%%%%%%%%%%%%%%%%%%%%%%%%%%%%%%%%%%%%%%%%%%%%%%%%%%
%%%%%%%%%%%%%%%%%%%%%%%%%%%%%%%%%%%%%%%%%%%%%%%%%%%%%
\section{Functorial constructions}
\label{sec:functors}

Many well-known matroid constructions are functorial.
We give some examples.

\begin{example}
For a function $\phi : A \to B$, a \defn{partial transversal} to $\phi$ 
is a pair $(X,Y)$ with $X \subseteq A$, $Y \subseteq B$, such that
$\phi$ restricts to a bijection $X \to Y$.
Let $\FSet$ be the category of finite sets (with functions as morphisms).
We have an injective functor $\Trans : \FSet \to \matroid$, 
defined as follows.  For objects $A \in \FSet$,
$\Trans(A) = A$.  For morphisms $\phi \in \Mor_\FSet(A,B)$, 
$\Trans(\phi)$ is the relation in which $X \rel{\Trans(\phi)} Y$
if and only if $(X,Y)$ is a partial transversal to $\phi$.
\end{example}

\begin{example}
\label{ex:matrices}
Let $\FF$ be a field.  For finite sets $A,B$ let $\Mat_{A \times B}(\FF)$
be the $|A| \times |B|$ dimensional affine space of matrices over $\FF$,
with rows indexed by the set $A$, and columns indexed by the set $B$.
For a matrix $M \in \Mat_{A \times B}(\FF)$, let 
$\lambda_M \in \Rel(\pow A, \pow B)$ be 
the relation $X \rel{\lambda_M} Y$ if and only if 
the minor of $M$ with row set $X$ and column set $Y$ is non-zero.
Then $\lambda_M \in \Mor_\matroid(A,B)$.  Here $\lambda_M$ is a bimatroid
and we say that $M$ \defn{represents} $\lambda_M$.
(Note that when $A$ and $B$ are not ordered sets, minors are only
defined up to sign, but $\lambda_M$ is nevertheless well-defined.)

Unfortunately, $M \mapsto \lambda_M$ 
does not give a functor from matrices to $\matroid$.
This is clear from the fact that 
there exist invertible matrices $M$ such that $\lambda_M$ is
not an isomorphism.
To rectify this problem, we can consider subvarieties instead
of individual matrices.
A torus-invariant subvariety of $\Mat_{A \times B}(\FF)$ is a
subvariety (i.e. closed, reduced, irreducible $\FF$-subscheme) 
which is invariant under both left and right 
multiplication by diagonal matrices.

Let $\TVar(\FF)$ be the category in which objects are finite sets,
and the morphisms $\Mor_{\TVar(\FF)}(A,B)$ are the torus-invariant
subvarieties of $\Mat_{A \times B}(\FF)$.  Composition of morphisms
is defined by matrix multiplication.  That is, if 
$\varX \in \Mor_{\TVar(\FF)}(A,B)$ and $\varY \in \Mor_{\TVar(\FF)}(B,C)$ then
$\varX \circ \varY$ is the image of $\varX \times \varY$ under the matrix matrix
multiplication map 
$\Mat_{A \times B}(\FF) \times \Mat_{B \times C}(\FF) \to
\Mat_{A \times C}(\FF)$.
The variety of diagonal matrices in $\Mat_{A \times A}(\FF)$ is 
the identity morphism.

We have a functor $\Minors : \TVar(\FF) \to \matroid$, defined as
follows.  For objects $A \in \TVar(\FF)$, $\Minors(A) = A$.
For a morphism $\varX \in \Mor_{\TVar(\FF)}(A,B)$, 
$\Minors(\varX) = \lambda_{M_0}$
where $M_0$ is the generic point of $\varX$.  Note that each non-zero minor 
of $M_0$ has a distinct torus-weight,
which is why the functor $\Minors$ respects composition.
The image of $\Minors$ is the class of bimatroids which
are representable over the algebraic closure of $\FF$.
\end{example}

\begin{example}
\label{ex:extalg}
We extend Example~\ref{ex:matrices}.
Recall that the exterior algebra is a functor
$\extalg : \Vect(\FF) \to \Alg(\FF)$ from 
vector spaces over $\FF$ to associative algebras over $\FF$.
Let $V, W$ be finite dimensional vector spaces over $\FF$.
Given $\varphi \in \extalg(V^*) \cong (\extalg V)^*$,
and $\omega \in \extalg(V \oplus W) \cong \extalg V \otimes_\FF \extalg W$,
we define an $\FF$-linear map 
$L_{\varphi, \omega}  \in \Hom_\FF(\extalg V, \extalg W)$,
\[
L_{\varphi, \omega}(\varpi) = (\varphi \otimes_\FF I_{\extalg W} )
   \big(\varpi \wedge \omega\big)
\,.
\]
An $\FF$-linear map $L \in \Hom_\FF(\extalg V, \extalg W)$ is \defn{pure} of
degree $s-t$ if 
$L = L_{f_1 \wedge \dots \wedge f_s, w_1 \wedge \dots \wedge w_t}$ 
for some vectors
$f_1, \dots, f_s \in V^*$ and $w_1, \dots, w_t \in V \oplus W$.
For example, if $T : V \to W$ is any linear map then the
associated $\FF$-algebra homomorphism 
$\extalg(T) : \extalg V \to \extalg W$ is 
pure of degree $0$.
It is not hard to show that the composition of two pure maps is again pure.

For finite sets $A,B$, let
$\mathbb{G}_k(A,B) \subset \Hom_\FF\big(\extalg (\FF^A), \extalg (\FF^B)\big)$ 
denote the variety of pure linear maps of degree $k$.  
$\mathbb{G}_k(A,B)$ is an embedding of the affine cone over the Grassmannian 
$\Gr_{k+|A|,|A|+|B|}(\FF)$ inside $2^{|A|+|B|}$-dimensional affine space.
Let $\{v_x \mid x \in A\}$ denote the standard basis for $\FF^A$,
and for $X = \{x_1, \dots, x_s\}$ let 
$v_X = v_{x_1} \wedge \dots \wedge v_{x_s}$, 
which is only well-defined up to sign.  For $L \in \mathbb{G}_k(A,B)$ let
$\lambda_L \in \Rel(\pow A, \pow B)$ be the relation in which
$X \rel{\lambda_L} Y$ if and only the coefficient of $v_Y$ in
$L(v_X)$ is non-zero.    Then $\lambda_L \in \Mor_\matroid(A,B)$,
and $\deg(\lambda_L) = k$, and we say that $L$ 
\defn{represents} $\lambda_L$.  
For a matrix $M \in \Mat_{A \times B}(\FF)$,
if we regard $M$ as a linear map $M : \FF^A \to \FF^B$, then 
$\lambda_M = \lambda_{\extalg(M)}$.
But therefore, as in Example~\ref{ex:matrices}, 
this construction is not functorial.

To fix this, we define a category
$\TVar^*(\FF)$, where the objects are finite sets and $\Mor_{\TVar^*(\FF)}(A,B)$
is the set of torus-invariant subvarieties of 
$\bigcup_{k \in \ZZ} \mathbb{G}_k(A, B)$.
For $\varX \in \Mor_{\TVar^*(\FF)}(A,B)$, $\varY \in \Mor_{\TVar^*(\FF)}(B,C)$,
composition is defined to be
$\varX \circ \varY := \{K \circ L \mid K \in \varX,\,L \in \varY\}$.
We regard $\TVar(\FF)$ as a subcategory of $\TVar^*(\FF)$, by identifying
$\varX \in \Mor_{\TVar(\FF)}(A,B)$ with 
$\extalg(\varX) := \{\extalg(M) \mid M \in \varX\} \in \Mor_{\TVar^*(\FF)}(A,B)$.
We can now extend the functor 
$\Minors: \TVar(\FF) \to \matroid$ to a functor
$\Minors^* : \TVar^*(\FF) \to \matroid$.
For objects $\Minors^*(A) = A$; for morphisms 
$\varX \in \Mor_{\TVar^*(\FF)}(A,B)$, $\Minors^*(\varX) = \lambda_{L_0}$, where $L_0$
is the generic point of $\varX$.  
The image of $\Minors^*$ is the class
of morphisms $\lambda \in \Mor_\matroid$ that are representable over
the algebraic closure of $\FF$.
\end{example}

\begin{example}
\label{ex:representable}
If $\FF$ is an infinite field, we have a subcategory of
$\TVar^*(\FF)$ where we take only the morphisms $\varX$ which have
an $\FF$-valued point $L$ such that $\lambda_L = \lambda_{L_0}$.
The image of $\Minors^*$ restricted to this subcategory is the class of
$\lambda \in \Mor_\matroid$ that are representable over $\FF$.
Therefore, when $\FF$ is infinite,
the morphisms of $\matroid$ that are representable over $\FF$
form a subcategory of $\matroid$.  
We denote this subcategory $\matroid(\FF)$.  It is not hard to
see that $(\matroid(\FF), \otimes)$ is a rigid monoidal category.

For finite fields, the preceding statements are false.  
For example, suppose $M \in \Mat_{\{1,2,3\} \times \{1,2,3,4\}}(\FF_2)$ has
pairwise linearly independent columns.
Then $\uniform{2}{\emptyset,\{1,2,3\}}$ and $\lambda_M$
are representable over $\FF_2$ but 
$\uniform{2}{\emptyset,\{1,2,3,4\}} = \uniform{2}{\emptyset,\{1,2,3\}} \circ \lambda_M$
is not.
\end{example}

\begin{example}
\label{ex:path}
Let $\DGraph$ be the following category of directed graphs.
The objects $\Ob_\DGraph$ are finite sets.  For $A, B \in \Ob_\DGraph$,
the morphisms $\Mor_\DGraph(A,B)$ are directed graphs $G$ in
which the elements of $A$ are half-edges pointing toward a vertex
of $G$ (sources), and the elements of $B$ are half-edges pointing away 
from a vertex of $G$ (sinks).  
We also allow a source in $A$ to join directly to a
sink in $B$ without a vertex in between, forming an 
\defn{isolated arrow}.
% this is necessary for the category to have identity morphisms
Loops and parallel edges are allowed,
however, the elements of $A \sqcup B$ must be the only half-edges.
An example of such a graph is shown in Figure~\ref{fig:dgraph}.
For $G \in \Mor_\DGraph(A,B)$, $H \in \Mor_\DGraph(B,C)$,
$G \circ H = G \sqcup_B H$ 
is the directed graph obtained taking the
disjoint union of the two graphs, and then gluing the half-edges
$B \subseteq V(G)$ to the half-edges $B \subseteq V(H)$ so that
each matching pair of directed half-edges becomes 
a complete directed edge. (Isolated arrows are simply ``absorbed'' into the 
half-edge that they match with.) An example of composition is shown in 
Figure~\ref{fig:graphcomposition}.
The identity morphism on $A$ consists of $|A|$ isolated arrows
joining the half-edges of the domain to their counterparts in
the codomain.
$\DGraph$ is a monoidal category, with disjoint union as the
monoidal operation.

\begin{figure}
\begin{center}
\begin{tikzpicture}
\tikzset{vertex/.style = {shape=circle,draw,fill={Blue!20}, inner sep=.6ex}}
\tikzset{edge/.style = -{Latex[length=1.2ex]}}
\node[vertex] (a) at  (0,0) {};
\node[vertex] (b) at  (0,1.5) {};
\node[vertex] (c) at  (1.5,0) {};
\node[vertex] (d) at  (1.5,1.5) {};
\node[vertex] (e) at  (3,0) {};
\node[vertex] (f) at  (3,1.5) {};
\node[vertex] (g) at  (1.5,3) {};
\node[vertex] (h) at  (3,3) {};

\draw[edge] (a) to (c);
\draw[edge] (d) to (b);
\draw[edge] (a) to (d);
\draw[edge] (c) to (d);
\draw[edge] (e) to (c);
\draw[edge] (d) to (e);
\draw[edge] (e) to (f);
\draw[edge] (b) to [bend right=22] (g);
\draw[edge] (b) to [bend left=22] (g);
\draw[edge] (g) to (d);
\draw[edge] (g) to (h);
\draw[edge] (f) to (h);

\draw[edge] (d)  to[bend left=22] (f);
\draw[edge] (f) to[bend left=22] (d);

\node (1) at (-1,2) {$1$};
\node (2) at (-1,1) {$2$};
\node (3) at (-1,0) {$3$};
\node (4) at (-1,-1) {$4$};
\node (5) at (4,3) {$5$};
\node (6) at (4,2) {$6$};
\node (7) at (4,1) {$7$};
\node (8) at (4,0) {$8$};
\node (9) at (4,-1) {$9$};
\draw[edge] (1) to (b);
\draw[edge] (2) to (b);
\draw[edge] (3) to (a);
\draw[edge] (h) to (5);
\draw[edge] (f) to (6);
\draw[edge] (f) to (7);
\draw[edge] (e) to (8);
\draw[edge] (4) to (9);
\end{tikzpicture}
\caption{A directed graph $G \in \Mor_\DGraph(\{1,2,3,4\},\{5,6,7,8,9\})$.
$G$ includes the isolated arrow $4 \to 9$.}
\label{fig:dgraph}
\end{center}
\end{figure}
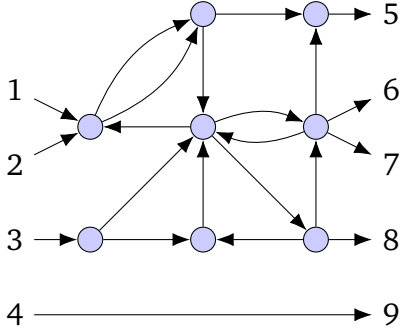

\begin{figure}
\begin{center}
\begin{tikzpicture}
\tikzset{vertex/.style = {shape=circle,draw,fill={Blue!20}, inner sep=.6ex}}
\tikzset{edge/.style = -{Latex[length=1.2ex]}}
\node[vertex] (a) at  (1,0) {};
\node[vertex] (b) at  (1,2) {};
\node[vertex] (c) at  (2,0) {};
\node[vertex] (d) at  (2,2) {};

\draw[edge] (a) to (b);
\draw[edge] (c) to (a);
\draw[edge] (b) to (d);
\draw[edge] (d) to (c);

\node (1) at (0,2) {$1$};
\node (2) at (0,0) {$2$};
\node (3) at (3,2.5) {$3$};
\node (4) at (3,1.5) {$4$};
\node (5) at (3,0) {$5$};
\draw[edge] (1) to (b);
\draw[edge] (2) to (a);
\draw[edge] (d) to (3);
\draw[edge] (d) to (4);
\draw[edge] (c) to (5);
\node at (1.5,-.6) {$G$};
\end{tikzpicture}
\qquad\quad
\begin{tikzpicture}
\tikzset{vertex/.style = {shape=circle,draw,fill={Blue!20}, inner sep=.6ex}}
\tikzset{edge/.style = -{Latex[length=1.2ex]}}
\node[vertex] (e) at  (4,0) {};
\node[vertex] (f) at  (4,1.5) {};

\draw[edge] (e) to[bend left] (f);
\draw[edge] (f) to[bend left] (e);

\node (3) at (3,2.5) {$3$};
\node (4) at (3,1.5) {$4$};
\node (5) at (3,0) {$5$};
\node (6) at (5,2.5) {$6$};
\node (7) at (5,0) {$7$};
\draw[edge] (4) to (f);
\draw[edge] (5) to (e);
\draw[edge] (3) to (6);
\draw[edge] (e) to (7);
\node at (4,-.6) {$H$};
\end{tikzpicture}
\qquad\quad
\begin{tikzpicture}
\tikzset{vertex/.style = {shape=circle,draw,fill={Blue!20}, inner sep=.6ex}}
\tikzset{edge/.style = -{Latex[length=1.2ex]}}
\node[vertex] (a) at  (1,0) {};
\node[vertex] (b) at  (1,2) {};
\node[vertex] (c) at  (2,0) {};
\node[vertex] (d) at  (2,2) {};
\node[vertex] (e) at  (3,0) {};
\node[vertex] (f) at  (3,1.5) {};

\draw[edge] (a) to (b);
\draw[edge] (c) to (a);
\draw[edge] (b) to (d);
\draw[edge] (d) to (c);
\draw[edge] (d) to[bend right] (f);
\draw[edge] (c) to (e);
\draw[edge] (e) to[bend left] (f);
\draw[edge] (f) to[bend left] (e);

\node (1) at (0,2) {$1$};
\node (2) at (0,0) {$2$};
\node (6) at (4,2.5) {$6$};
\node (7) at (4,0) {$7$};
\draw[edge] (1) to (b);
\draw[edge] (2) to (a);
\draw[edge] (d) to [bend left=20] (6);
\draw[edge] (e) to (7);
\node at (2,-.6) {$G \circ H$};
\end{tikzpicture}
\caption{Composition in $\DGraph$.}
\label{fig:graphcomposition}
\end{center}
\end{figure}
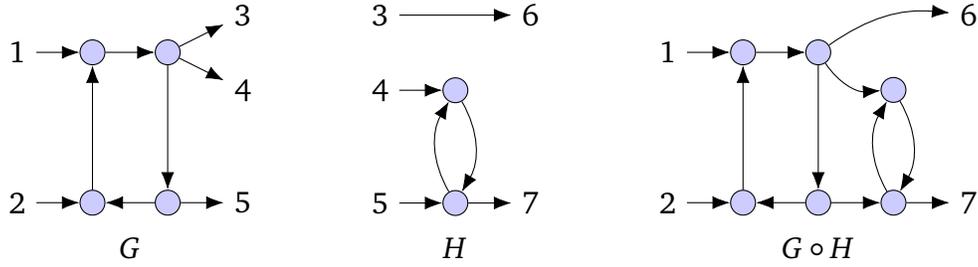

We have a functor $\Path : \DGraph \to \matroid$, defined as follows.  
For objects $A \in \Ob_\DGraph$, $\Path(A) = A$.  
For morphisms $G \in \Mor_\DGraph(A,B)$, 
$\Path(G)$ is the relation in which $X \rel{\Path(G)} Y$
if and only if $|X| =|Y| = k$ for some $k$, and there exists
a tuple $(P_1, \dots, P_k)$ of vertex-disjoint directed paths such 
that $P_i$ joins a half-edge in $X$ to a half-edge in $Y$.
$\Path(G)$ is a bimatroid, as shown in \cite{Schrijver}.
This generalizes the linkage theorem \cite{Perfect, Pym}, which
states that the range of $\Path(G)$ is the collection
of independent sets of matroid on $B$.  
The matroids obtained in this way are called \defn{gammoids}.

Alternatively, the fact that $\Path(G) \in \Mor_\matroid(A,B)$
can be seen algebraically from Talaska's generalization \cite{Talaska} 
of the Lindstr\"om--Gessel--Viennot lemma.
% \cite{GV, Lind}.  
Yet another proof can be obtained by 
considering the construction in the next example.
\end{example}

\begin{example}
\label{ex:rigidpath}
We can extend $\DGraph$ to a rigid category $\DGraph^*$.  
The objects of $\DGraph^*$
are signed finite sets, 
which we represent as ordered pairs of finite sets $A = (A^+,A^-)$. 
The morphisms are
$\Mor_{\DGraph^*}(A,B) := \Mor_\DGraph(A^+ \sqcup B^-, B^+ \sqcup A^-)$.
For $G \in \Mor_{\DGraph^*}(A,B)$, the same digraph viewed element
of $\Mor_\DGraph(A^+ \sqcup B^-, B^+ \sqcup A^-)$ is denoted
$\vec G$, and is called the \defn{forward digraph}.
We compose morphisms $G \circ H$ according to the same rule as 
$\DGraph$: $G \circ H = G \sqcup_B H$ --- 
the difference is that now the half-edges 
of $B^-$ will be sources of $G$ and instead of sinks of $H$, rather
than the other way around.  (Note that this is not the same as composing
the forward digraphs --- in fact $\vec G \circ \vec H$ is not
necessarily defined.) In addition, if a set of isolated arrows 
in $G$ joins up with a set of isolated arrows in $H$ such that they
form a closed loop with no vertices, we delete this loop.
The monoidal structure of $\DGraph^*$ 
is again defined by disjoint union; for objects
this means $A \otimes B := (A^+ \sqcup B^+, A^- \sqcup B^-)$.
For an object $A = (A^+, A^-)$, let $A^\dagger := (A^-, A^+)$.
The evaluation and coevaluation morphisms
$\ev_A \in \Mor_{\DGraph^*}(A \otimes A^\dagger, \emptyset)$ and 
$\ev_A^\dagger \in \Mor_{\DGraph^*}(\emptyset, A \otimes A^\dagger)$
have the same forward digraph as the identity morphism on $A$;
these satisfy~\eqref{eq:rigidity}.
$\DGraph$ is identified with 
the full subcategory of $\DGraph^*$, where the objects
are the pairs $(A^+,\emptyset)$.

The functor $\Path$ extends to a functor
$\Path^* : \DGraph^* \to \matroid$.
\begin{enumerate}[(a)]
\item For objects $A = (A^+,A^-)$,
$\Path^*(A) = A^+ \sqcup A^- \in \Ob_\matroid$.
\item 
For morphisms $G \in \Mor_{\DGraph^*}(A,B)$,
$\Path^*(G) \in \Mor_\matroid(A^+ \sqcup A^-, B^+ \sqcup B^-)$
is the unique morphism with the same associated matroid as 
$\Path(\vec G)$.
\end{enumerate}
$\Path^*$ is a rigid functor.  That is, it respects $\otimes$ 
and sends (co)evaluation morphisms to (co)evaluation morphisms.

A rigid functor on $\DGraph^*$ is characterized by its evaluation on 
digraphs with a single vertex, no loops, and an arbitrary number of
half-edges.
Thus, $\Path : \DGraph \to \matroid$ is the unique functor
with the following two properties:
\begin{itemize}
\item If $G \in \Mor_{\DGraph}(A,B)$ has a single vertex, then 
$\Path(G) = \uniform{0}{A,V(G)} \circ \uniform{0}{V(G),B}$.
\item $\Path$ extends to a rigid functor 
$\DGraph^* \to \matroid$ via rules (a) and (b) above.
\end{itemize}

There are many variations on this theme.  The first property
above can be replaced by any (appropriately symmetric) rule
for digraphs with a single vertex.  For example,
there is another functor $\EPath : \DGraph \to \matroid$,
in which we consider tuples of edge-disjoint paths instead of
tuples of vertex-disjoint paths.  This functor is characterized
by $\EPath(G) = \uniform{0}{A,B}$ if $G \in \Mor_\DGraph(A,B)$
has a single vertex.

It is tempting to try to define a ``rigid category of gammoids'' 
as the image 
of the functor $\Path^*$.  However, this does not make sense,
because $\Path^*$ is not one-to-one on objects.
In the next example, we resolve this issue.
\end{example}

\begin{example}
\label{ex:bic}
We now modify the construction in Examples \ref{ex:path} 
and \ref{ex:rigidpath}.  Consider digraphs which are
morphisms in $\DGraph$, but in addition every vertex is assigned a colour, 
either white or black.  The colouring is not required to be proper.
Let $\BDGraph$ denote the category with these bicoloured directed
graphs as morphisms.  An example is shown in Figure~\ref{fig:bdgraph} (left).
Like $\DGraph$, $\BDGraph$ can be extended to a rigid category.
Hence we have a unique functor 
$\BPath: \BDGraph \to \matroid$ characterized (as in 
Example~\ref{ex:rigidpath})
by the following evaluations for digraphs with a single vertex:
if $G \in \Mor_\BDGraph(A,B)$ has a single vertex $v \in V(G)$, then
\begin{equation}
\label{eq:bic}
   \BPath(G) = \begin{cases}
    \uniform{1-|A|}{AB} &\quad \text{if $v$ is coloured white} \\
    \uniform{|B|-1}{AB} &\quad \text{if $v$ is coloured black}.
\end{cases}
\end{equation}
\begin{figure}
\begin{center}
\begin{tikzpicture}
\tikzset{white/.style = {shape=circle,draw, inner sep=.6ex}}
\tikzset{black/.style = {shape=circle,draw,fill={Black}, inner sep=.6ex}}
\tikzset{edge/.style = -{Latex[length=1.2ex]}}
% vertices
\node[black] (a) at  (0,0) {};
\node[white] (b) at  (0,2) {};
\node[black] (c) at  (0,4) {};
\node[black] (d) at  (2,0) {};
\node[white] (e) at  (2,2) {};
\node[black] (f) at  (2,4) {};
\node[black] (g) at  (4,0) {};
\node[black] (h) at  (4,2) {};
\node[white] (i) at  (4,4) {};
%edges
\draw[edge] (a) to (b);
\draw[edge] (c) to (b);
\draw[edge] (a) to (d);
\draw[edge] (f) to (c);
\draw[edge] (c) to [bend left] (i);
\draw[edge] (f) to (b);
\draw[edge] (e) to (a);
\draw[edge] (f) to (e);
\draw[edge] (e) to (d);
\draw[edge] (i) to (f);
\draw[edge] (h) to (e);
\draw[edge] (g) to (h);
\draw[edge] (g) to (d);
\draw[edge] (e) to (g);
\draw[edge] (h) to (i);
%half-edges
\node (1) at (-1,4) {$1$};
\node (2) at (-1,2.5) {$2$};
\node (3) at (-1,1.5) {$3$};
\node (4) at (5,4) {$4$};
\node (5) at (5,2.5) {$5$};
\node (6) at (5,1.5) {$6$};
\node (7) at (5,0) {$7$};
\draw[edge] (1) to (c);
\draw[edge] (2) to (b);
\draw[edge] (3) to (b);
\draw[edge] (i) to (4);
\draw[edge] (h) to (5);
\draw[edge] (h) to (6);
\draw[edge] (g) to (7);
\end{tikzpicture}
\qquad
\qquad
\begin{tikzpicture}
\tikzset{white/.style = {shape=circle,draw, inner sep=.6ex}}
\tikzset{black/.style = {shape=circle,draw,fill={Black}, inner sep=.6ex}}
\tikzset{edge/.style = -{Latex[length=1.2ex]}}
% vertices
\node[black] (a) at  (0,0) {};
\node[white] (b) at  (0,2) {};
\node[black] (c) at  (0,4) {};
\node[black] (d) at  (2,0) {};
\node[white] (e) at  (2,2) {};
\node[black] (f) at  (2,4) {};
\node[black] (g) at  (4,0) {};
\node[black] (h) at  (4,2) {};
\node[white] (i) at  (4,4) {};
%edges
\draw[edge] (b) to (a);
\draw[edge] (b) to (c);
\draw[edge] (d) to (a);
\draw[edge] (d) to (a);
\draw[edge] (f) to (c);
\draw[edge] (c) to [bend left] (i);
\draw[edge] (b) to (f);
\draw[edge] (a) to (e);
\draw[edge] (e) to (f);
\draw[edge] (e) to (d);
\draw[edge] (i) to (f);
\draw[edge] (e) to (h);
\draw[edge] (h) to (g);
\draw[edge] (g) to (d);
\draw[edge] (e) to (g);
\draw[edge] (i) to (h);
%half-edges
\node (1) at (-1,4) {$1$};
\node (2) at (-1,2.5) {$2$};
\node (3) at (-1,1.5) {$3$};
\node (4) at (5,4) {$4$};
\node (5) at (5,2.5) {$5$};
\node (6) at (5,1.5) {$6$};
\node (7) at (5,0) {$7$};
\draw[edge] (1) to (c);
\draw[edge] (b) to (2);
\draw[edge] (3) to (b);
\draw[edge] (i) to (4);
\draw[edge] (5) to (h);
\draw[edge] (6) to (h);
\draw[edge] (7) to (g);
\end{tikzpicture}
\caption{A bicoloured directed graph 
$G \in \Mor_\BDGraph(\{1,2,3\},\{4,5,6,7\})$ (left).  The orientations
of the edges and half-edges of $G$ can be changed to give 
a perfect orientation (right).}
\label{fig:bdgraph}
\end{center}
\end{figure}
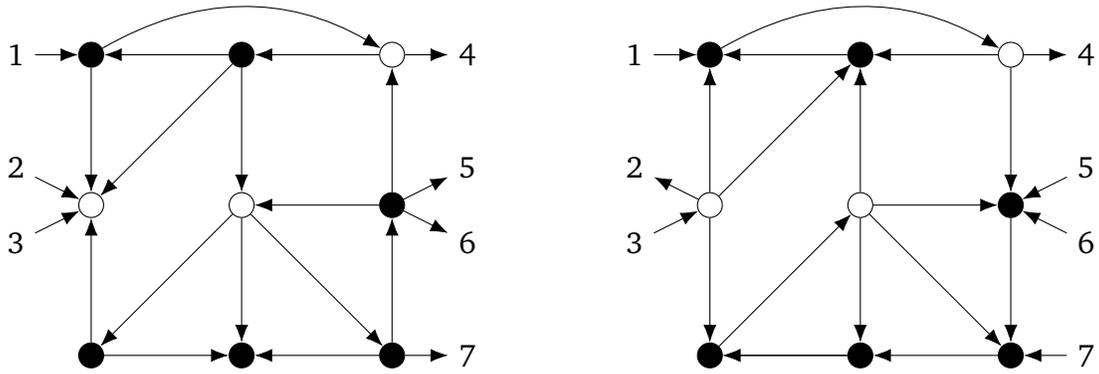

We note four facts about this functor.  
\begin{enumerate}[(a)]
\item Using the rigid
structure of $\matroid$, we can see that changing the orientations
of the (full) edges of $G$ does not change $\BPath(G)$.  Furthermore,
the associated matroid $\alpha_{\BPath(G)}$ does not even depend
on the orientations of the half-edges.
\item
$G$ is said to be \defn{perfectly oriented} \cite{Postnikov-networks}
if every white vertex of $G$ has a unique incoming edge or half-edge,
and every black vertex of $G$ has a unique outgoing edge or half-edge
(see Figure \ref{fig:bdgraph} (right)).
If $G$ is perfectly oriented then $\BPath(G) = \Path(G)$. 
\item If $G$ is not perfectly oriented, and there is way to change the 
orientations of the edges and half-edges of $G$ so that it 
becomes perfectly oriented, then $\BPath(G) = 0_{AB}$.
\item 
For any uncoloured digraph $G \in \Mor_\DGraph(A,B)$, one can construct 
a perfectly oriented bicoloured digraph 
$G^{\bullet\circ} \in \Mor_\BDGraph(A,B)$ such 
that $\Path(G) = \BPath(G^{\bullet\circ})$.
To form $G^{\bullet\circ}$ we replace each vertex of $G$ by a black and white
pair, as shown in Figure~\ref{fig:convert}.
\end{enumerate}
\begin{figure}
\begin{center}
\begin{tikzpicture}
\tikzset{vertex/.style = {shape=circle,draw,fill={Blue!20}, inner sep=.6ex}}
\tikzset{edge/.style = -{Latex[length=1.2ex]}}
% vertices
\node[vertex] (a) at  (1,1) {};
\node (1) at (0,0) {};
\node (2) at (0,1) {};
\node (3) at (0,2) {};
\node (4) at (2,1.5) {};
\node (5) at (2,.5) {};
%edges
\draw[edge] (1) to (a);
\draw[edge] (2) to (a);
\draw[edge] (3) to (a);
\draw[edge] (a) to (4);
\draw[edge] (a) to (5);
\end{tikzpicture}
\qquad \raisebox{1cm}{$\to$} \qquad
\begin{tikzpicture}
\tikzset{vertex/.style = {shape=circle,draw,fill={Blue!20}, inner sep=.6ex}}
\tikzset{white/.style = {shape=circle,draw, inner sep=.6ex}}
\tikzset{black/.style = {shape=circle,draw,fill={Black}, inner sep=.6ex}}
\tikzset{edge/.style = -{Latex[length=1.2ex]}}
% vertices
\node[black] (a) at  (1,1) {};
\node[white] (b) at  (2,1) {};
\draw[edge] (a) to (b);
\node (1) at (0,0) {};
\node (2) at (0,1) {};
\node (3) at (0,2) {};
\node (4) at (3,1.5) {};
\node (5) at (3,.5) {};
%edges
\draw[edge] (1) to (a);
\draw[edge] (2) to (a);
\draw[edge] (3) to (a);
\draw[edge] (b) to (4);
\draw[edge] (b) to (5);
\end{tikzpicture}
\caption{Converting a directed graph $G$
into a bicoloured directed graph $G^{\bullet\circ}$.}
\label{fig:convert}
\end{center}
\end{figure}
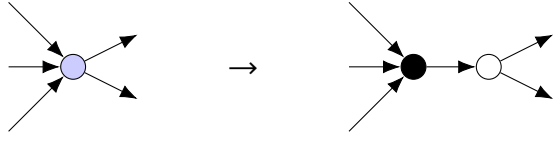

Let $(\gammoid, \otimes)$ 
be the monoidal subcategory of $(\matroid, \otimes)$
defined by the image of the functor 
$\BPath: \BDGraph \to \matroid$.  It is not hard to show that
$\gammoid$ contains all evaluation and coevaluation morphisms of
the category $\matroid$, and therefore $(\gammoid, \otimes)$ is
a rigid category.
In fact $(\gammoid, \otimes)$ is the smallest monoidal subcategory of 
$(\matroid, \otimes)$ that is rigid 
and contains the image of the functor $\Path$.
As such, $\gammoid$ is the appropriate analogue
of the category $\matroid$ for gammoids.
\end{example}

\begin{example}
\label{ex:positroid}
Positroids are a special class of matroids, arising in the theory of 
total positivity.
The definition requires the set of points to be totally ordered.
We write $E = (e_1, \dots, e_n)$ to mean the
set $\{e_1, \dots, e_n\}$ with total ordering $e_1 < \dots < e_n$.
Let $E^\dagger = (e_n, \dots, e_1)$ denote the opposite ordering.

If $D,E$ are totally ordered finite sets, and 
$M \in \Mat_{D \times E}(\RR)$, then for each pair of subsets 
$X \subseteq D$, $Y \subseteq E$, with $|X|=|Y|$, we have 
a well-defined minor $M_{XY} \in \RR$ (no longer just well-defined
up to sign).  
A matroid $(E,\alpha)$ is called a \defn{positroid} if there exists
a matrix $M \in \Mat_{D \times E}(\RR)$, such that $|D| \leq |E|$, 
$M_{DY} \geq 0$ for all
$Y \subseteq E$ with $|Y| = |D|$, 
and $Y \in \alpha$ if and only if $M_{DY} > 0$.

We define a monoidal category $(\positroid, \otimes)$ as follows.
$\Ob_\positroid$ is the class of totally ordered finite sets.  For 
$A = (a_1, \dots a_n)$, $B = (b_1, \dots, b_m)$,
$A \otimes B 
:= (a_1 , \dots , a_n , b_1 , \dots , b_m)$.  
$\Mor_\positroid(A,B)$ is the subset of morphisms
$\lambda \in \Mor_\matroid(A, B)$ such that $\alpha_\lambda$
is a positroid on $A^\dagger \otimes B$, or $\lambda = 0_{AB}$.
For morphisms $\lambda, \mu \in \Mor_\positroid$,
$\lambda \otimes \mu := \lambda \times \mu$.
It is not immediately clear that the morphisms of $\positroid$ are
closed under composition, but this will be explained below.

The category $\positroid$ has much of the same structure as
$\matroid$.  
If $\lambda \in \Mor_\positroid(A,B)$, then 
$\lambda^\dagger \in \Mor_\positroid(B^\dagger,A^\dagger)$.
The functor $\dagger : \positroid \to \positroid$ 
is a contravariant involution.  
For each $A \in \Ob_\positroid$, we have canonical morphisms
$\ev_A \in \Mor_\positroid(A \otimes A^\dagger, \emptyset)$,
defined (as in $\matroid$) 
by the relation $(X,Y) \rel{\ev_A} \emptyset$
if and only if $X = \overline Y$.
$\positroid$ is a rigid category with evaluation and
coevaluation morphisms given by $\ev_A$ and $\ev_A^\dagger$,
which satisfy \eqref{eq:rigidity}.
%However, the only invertible morphisms in $\positroid$ are
%the identity morphisms.  
However, $\positroid$ is not a symmetric
monoidal category (or even a braided monoidal category), 
since we do not have natural braiding isomorphisms
$A \otimes B \leftrightarrow B \otimes A$.

We give three examples of functors to $\positroid$; a fourth
is discussed in Remark~\ref{rmk:stablepositroid}.

\begin{itemize}
\item
A matrix $M \in \Mat_{A \times B}(\RR)$ is
\defn{totally non-negative} if and only if $M_{XY} \geq 0$ for all minors.
Totally non-negative matrices form a category: let $\TNN$ be the
category in which $\Ob_\TNN$ is the class of totally ordered finite
sets, and $\Mor_\TNN(A,B) \subseteq \Mat_{A \times B}(\RR)$ 
is the set of totally non-negative matrices with rows indexed by 
$A$ and columns indexed by $B$, with composition defined by matrix
multiplication.
$M \mapsto \lambda_M$ defines a functor from $\TNN$ to $\positroid$.

\item Consider the construction in Example~\ref{ex:path}
applied to plane digraphs.  Let $\PDGraph$ be the category in
which objects are totally ordered finite sets, and the morphisms 
$\Mor_{\PDGraph}(A,B)$ are directed graphs embedded in the plane,
with half-edges
$A \sqcup B$ appearing in the order
\[
a_n, \dots, a_1, b_1, \dots, b_m
\]
clockwise along the outer face. For example, if we take the graph $G$ in 
Figure~\ref{fig:dgraph} together with its plane embedding,
then $G \in \Mor_\PDGraph((1,2,3),(4,5,6,7))$.
The construction from Example~\ref{ex:path} applied to this
context gives a functor $\Path : \PDGraph \to \positroid$.
This follows from the Lindstr\"om--Gessel--Viennot lemma \cite{GV, Lind}.  

%
% Note: this does not work for $\DGraph^*$!
%

\item Consider the construction in Example~\ref{ex:bic}
applied to plane bicoloured digraphs.  Let $\PBDGraph$ be the category
in which the objects are totally ordered finite sets, and the
morphisms are as in $\PDGraph$ but in addition 
the vertices are bicoloured.
For example, if we take the graph $G$ in 
Figure~\ref{fig:bdgraph} with its plane embedding,
then $G \in \Mor_\PBDGraph((1,2,3),(4,5,6,7))$.
Such graphs are called \defn{plabic graphs}.
The construction from Example~\ref{ex:bic} gives
a functor $\BPath: \PBDGraph \to \positroid$.  Indeed, properties (a)--(c) 
imply that this functor realizes one of
Postnikov's fundamental constructions of positroids (positroids
via plabic graphs) \cite{Postnikov-networks},
and in particular 
$\BPath: \PBDGraph \to \positroid$ is surjective.  
This establishes that the morphisms of $\positroid$ are
closed under composition.
\end{itemize}
\end{example}

\begin{example}
\label{ex:stable}
Our final example is concerned with stable polynomials; we refer the reader 
to the survey \cite{Wagner-survey} for detailed background.

For a finite set $A = \{a_1, \dots, a_n\}$, let $\boldz_A
= \{z_{a_1}, \dots, z_{a_n}\}$, be a set of formal indeterminates
indexed by $A$,
and let $\CC[\boldz_A] = \CC[z_{a_1}, \dots, z_{a_n}]$.  For
any subset $X \subseteq A$ let 
$\boldz^X = \prod_{a \in X} z_a \in \CC[\boldz_A]$.
The vector space of \defn{multiaffine polynomials} in 
$\CC[\boldz_A]$ is
$\CMA[\boldz_A] := \vspan\{\boldz^X \mid X \subseteq A\}$.

A non-zero polynomial $f = f(\boldz_A)$ is \defn{stable}
if the evaluation of $f$ at every point in $\calH^A$ is non-zero,
where $\calH = \{w \in \CC \mid \mathrm{Im}(w) > 0\}$ denotes 
the upper half-plane in $\CC$.  The zero polynomial is also,
by convention, considered to be stable.  
A $\CC$-linear map $\phi : \CMA[\boldz_A] \to \CMA[\boldz_B]$ is a
(multiaffine) \defn{stability preserver} if $\phi(f) \in \CMA[\boldz_B]$ 
is stable whenever $f \in \CC[\boldz_A]$ is stable.  In addition,
we say that $\phi$ is \defn{homogeneous} if $\phi \equiv 0$, or
there is an integer $k \in \ZZ$ such that for all $X \subseteq A$,
$\phi(\boldz^X)$ is a homogeneous polynomial of degree $|X|+k$.
Finally, we need to eliminate some trivial cases: we say that
$\phi$ is a \defn{true stability preserver} if it extends 
$\CC[z']$-linearly to a stability preserver 
$\phi : \CMA[\boldz_A] \otimes \CMA[z'] 
\to \CMA[\boldz_B] \otimes \CMA[z']$ in one additional variable.
All interesting stability preservers are true --- every non-true stability
preserver is just a rank-one projection
\cite{BB1} (see also \cite[Prop. 2.5]{Pur-TNN}).

Homogeneous stability preservers form a category, $\HSP$, in
which the objects are finite sets, and $\Mor_\HSP(A,B)$ is
the set of true homogeneous stability preservers 
$\phi : \CMA[\boldz_A] \to \CMA[\boldz_B]$.  Composition is defined
to be ordinary composition of linear maps.

We have a functor $\Supp: \HSP \to \matroid$, defined as follows.
For objects we have $\Supp(A) = A$.  For morphisms $\phi \in \Mor_\HSP(A,B)$,
let $\Supp(\phi)$ be the relation defined by $X \rel{\Supp(\phi)} Y$
if and only if the coefficient of $\boldz^Y$ in $\phi(\boldz^X)$ is non-zero.
The relation $\Supp(\phi)$ is called the \defn{support} of $\phi$.

This construction works because of a combination of three major theorems.  
First, a landmark theorem of Borcea and Br\"and\'en \cite[Theorem 1.1]{BB1}
states that $\phi$
is a (true) stability preserver if and only if a related polynomial
$S(\phi) \in \CC[\boldz_{A \sqcup B}]$ is itself stable.  $S(\phi)$
is called the \emph{symbol} of $\phi$.  When $\phi$ is a homogeneous
linear map, $S(\phi)$ is a homogeneous polynomial.
Second, a theorem of Choe, Oxley, Sokal
and Wagner \cite[Theorem 7.1]{COSW} 
states that the support of homogeneous stable
polynomial must be a matroid.
Combining these implies that $\Supp(\phi) \in \Exch(\pow A, \pow B)$.
Finally, the Phase Theorem \cite[Theorem 6.1]{COSW} asserts that there exists
a complex number $\gamma \in \CC^\times$ such that all coefficients
of $\gamma S(\phi)$ are non-negative real numbers.  
This in turn implies that for all $X \subseteq A$, all coefficients
of $\gamma \phi(\boldz^X)$ are non-negative.  Therefore, 
when two stability preservers are composed, the supports compose
precisely as relations.
\end{example}

\begin{remark}
\label{rmk:stablepositroid}
Both Example~\ref{ex:extalg} and Example~\ref{ex:stable} involve
spaces of linear maps between vector spaces of dimensions 
$2^{|A|}$ and $2^{|B|}$.  Taking $\FF = \CC$, and working with
a totally ordered finite sets, 
we can identify the spaces $\extalg(\CC^A)$
and $\CMA[\boldz_A]$, and thus consider morphisms which are both
homogeneous true stability preservers, and contained in $\mathbb{G}_k(A,B)$, 
for some $k$.  The the main result of \cite{Pur-TNN} 
implies that this intersection is the ``non-negative part'' of 
$\bigcup_{k \in \ZZ} \mathbb{G}_k(A,B)$.
The restriction of $\Supp$ to this 
intersection category gives a surjective functor to $\positroid$.  
%This gives an alternate proof of the fact that the morphisms 
%of $\positroid$ are closed under composition.
\end{remark}

\begin{remark}
The categories $\TVar^*(\FF)$ and $\HSP$ can both be endowed with the 
structure of a rigid monoidal category, which makes the functors 
$\Minors^*$ and $\Supp$ rigid functors.  In
$\TVar^*(\FF)$, the monoidal structure 
is given by disjoint union on objects, and $\otimes_\FF$ on morphisms,
where we identify 
$\extalg(\FF^A) \otimes_\FF \extalg(\FF^B) \equiv \extalg(\FF^{A \sqcup B})$.
Each object $A = A^\dagger$ is own dual; 
the evaluation morphism
$\ev_A$ is the torus-orbit closure of the linear map 
$\extalg(\FF^A) \otimes_\FF \extalg(\FF^A) \to \FF$,
$\omega \otimes_\FF \varpi \mapsto [v_A] \omega \wedge \varpi$;
coevaluation is the torus-orbit closure of the dual map.
The structures on $\HSP$ on are defined similarly.
\end{remark}

\begin{remark}
Lorentzian polynomials \cite{BH} are an extension of the class of
homogeneous stable polynomials, and have many similar properties.  
In particular:
\begin{itemize}
\item The support of a multiaffine Lorentzian polynomial is a matroid.
\item If $\phi$ is a linear
transformation of polynomials, and the symbol $S(\phi)$ is Lorentzian, 
then $\phi$ maps Lorentzian polynomials to Lorentzian polynomials.
\end{itemize}
However, since this is not a complete characterization of linear maps
preserving the Lorentzian property, it is not immediately clear if there 
is an analogue of Example~\ref{ex:stable} for Lorentzian polynomials.  
We pose this as an open question.
\end{remark}

%%%%%%%%%%%%%%%%%%%%%%%%%%%%%%%%%%%%%%%%%%%%%%%%%%%%%
%%%%%%%%%%%%%%%%%%%%%%%%%%%%%%%%%%%%%%%%%%%%%%%%%%%%%
%%%%%%%%%%%%%%%%%%%%%%%%%%%%%%%%%%%%%%%%%%%%%%%%%%%%%
\section{Hom-functor and related categories}
\label{sec:hom}

In this section we recall some basic constructions in category theory,
and apply them to the category $\matroid$.

For any locally small category $\calC$, and any object $S \in \Ob_\calC$,
we obtain a covariant functor $\Hom_\calC(S, -) : \calC \to \Set$,
called the \defn{covariant hom-functor}.
For objects $A \in \Ob_\calC$, $\Hom_\calC(S, A) = \Mor_\calC(S,A)$ 
is the set of morphisms
from $S$ to $A$, 
while for a morphism $\lambda \in \Mor_\calC(A,B)$,
$\Hom_\calC(S,\lambda)$ is the function 
$\lambda_\circ : \Hom_\calC(S,A) \to \Hom_\calC(S,B)$, 
\[
\lambda_\circ(\mu) = \mu \circ \lambda 
\,.
\]

When $\calC = \matroid$, the covariant hom-functor is injective for 
every object $S$.  
Thus by considering the image of $\Hom_\matroid(S, -)$, 
we can reinterpret $\matroid$ as a more familiar type of category,
in which the objects are sets (of matroids), and the morphisms are 
certain functions between theses sets.  Each such function is
a ``matroid transformation'': it transforms a matroid in the domain
into a matroid in the codomain, according to some rule, which is also
controlled by a matroid.

\begin{proposition}
For any set $S \in \Ob_\matroid$, $\Hom_\matroid(S, -)$ is an injective
functor.  That is if 
$\lambda, \mu  \in \Mor_\matroid(A,B)$
are distinct morphisms
then $\lambda_\circ, \mu_\circ : \Mor_\matroid(S,A) \to \Mor_\matroid(S,B)$ 
are distinct functions.
\end{proposition}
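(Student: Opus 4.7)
The plan is to show injectivity by constructing, for any two distinct morphisms $\lambda, \mu \in \Mor_\matroid(A,B)$, a witness $\nu \in \Mor_\matroid(S,A)$ separating them, namely an elementary relation that ``selects'' a subset on which $\lambda$ and $\mu$ disagree.

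First I would unpack what it means for $\lambda \neq \mu$ at the level of relations: there exist $X \in \pow A$ and $Y \in \pow B$ such that exactly one of $X \rel\lambda Y$ and $X \rel\mu Y$ holds. Without loss of generality, assume $X \rel\lambda Y$ but $X \not\rel\mu Y$.

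Next, pick any $W \in \pow S$ (for instance $W = \emptyset$) and consider the elementary relation $\nu := \elementary{W}{X} \in \Rel(\pow S, \pow A)$. This was shown in the earlier examples to satisfy the exchange axiom, so $\nu \in \Mor_\matroid(S,A)$. By the definition of relational composition, for any $Y' \in \pow B$ we have
\[
   W \rel{\nu \circ \lambda} Y' \quad\Longleftrightarrow\quad \exists X' \in \pow A:\ W \rel{\nu} X' \text{ and } X' \rel\lambda Y'
   \quad\Longleftrightarrow\quad X \rel\lambda Y'\,,
\]
since $W \rel{\nu} X'$ forces $X' = X$. The same computation with $\mu$ in place of $\lambda$ gives $W \rel{\nu \circ \mu} Y' \iff X \rel\mu Y'$.

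Specializing to $Y' = Y$ shows that $W \rel{\nu \circ \lambda} Y$ while $W \not\rel{\nu \circ \mu} Y$, so $\nu \circ \lambda \neq \nu \circ \mu$ as relations, hence as morphisms in $\matroid$. This means $\lambda_\circ(\nu) \neq \mu_\circ(\nu)$, so the functions $\lambda_\circ$ and $\mu_\circ$ differ, completing the proof. There is no real obstacle here; the only thing to verify carefully is that $\elementary{W}{X}$ lies in $\Exch(\pow S, \pow A)$, which was recorded earlier, and that composition with it acts as a ``point evaluation'' on the relation, which is immediate from the definition of $\circ$.
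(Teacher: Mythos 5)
Your proof is correct and is essentially the paper's argument: composing with the elementary relation $\elementary{W}{X}$ acts as a point evaluation at $X$, which is exactly how the paper recovers $\lambda$ from $\lambda_\circ$ (there with $W=\emptyset$). The only difference is presentational — you produce a separating witness for two distinct morphisms, while the paper reconstructs the whole relation — and these are trivially equivalent.
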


\begin{proof}
For $X \subseteq A$, let $\lambda_X = \lambda_\circ(\elementary{\emptyset}{X})$.
Then $\emptyset \rel{\lambda_X} Y$ if and only if $X \rel\lambda Y$.  
Thus, we recover
$\lambda$ from $\lambda_\circ$.
\end{proof}

We can also form a related category $\matroidS$, in which the objects 
are pairs $(A,\mu)$ with $\mu \in \Hom_\matroid(S,A)$, and
for objects $(A,\mu)$ and $(B,\nu)$,
$\Mor_{\matroidS}((A,\mu),(B,\nu)) = 
\{ \lambda  \in \Mor_\matroid(A,B) \mid
 \lambda_\circ(\mu) = \nu\}$.
The objects can be regarded as matroids on some set of the form $S \sqcup E$,
and we call these \defn{matroids with base $S$}.

In the case of $S = \emptyset$, the objects of the category 
$\Mempty$ are pairs $(A, \mu)$ where $A$ is a finite
set, and $\mu \in \Mor_\matroid(\emptyset, S) = 
\Exch(\pow \emptyset, \pow A)$.
We identify $(A, \mu)$ with the matroid $(A, \alpha_\mu)$,
and for $\lambda \in \Mor_\matroid(A,B)$ write 
$\lambda_\circ(\alpha_\mu) := \alpha_{\lambda_\circ(\mu)}$.
This implicitly defines a notion of a morphism between two matroids,
which we now unpack.

\begin{proposition}
\label{prop:catofmatroids}
Let $\alpha \in \M(A)$, $\beta \in \M(B)$ be (possibly zero) matroids.
A morphism $\lambda$ between $\alpha$ and $\beta$ in the category 
$\Mempty$ is a relation $\lambda \in \Exch(\pow{A}, \pow{B})$ 
such that:
\begin{enumerate}[(a)]
\item
if $X \in \alpha$ and $X \rel\lambda Y$ then $Y \in \beta$;
\item
for every $Y \in \beta$ there exists $X \in \alpha$ such that
$X \rel\lambda Y$.
\qed
\end{enumerate}
\end{proposition}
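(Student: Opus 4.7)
The plan is essentially to unpack the definitions of the covariant hom-functor and the associated matroid construction from Section~\ref{sec:relations}; there is no serious obstacle, just careful bookkeeping.

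First I would translate between the two viewpoints on matroids. The object $\alpha \in \M(A)$ corresponds, under the canonical bijection $\M(A) \leftrightarrow \Exch(\pow\emptyset, \pow A)$, to a relation $\mu \in \Mor_\matroid(\emptyset, A)$, and similarly $\beta$ corresponds to some $\nu \in \Mor_\matroid(\emptyset, B)$. Because $\pow \emptyset = \{\emptyset\}$ is a singleton, these relations are completely determined by their fibers over $\emptyset$: explicitly, $\emptyset \rel\mu X$ iff $X \in \alpha$, and $\emptyset \rel\nu Y$ iff $Y \in \beta$. Under the identification in the paragraph immediately preceding the proposition, a morphism between $\alpha$ and $\beta$ in $\Mempty$ is by definition any $\lambda \in \Mor_\matroid(A,B) = \Exch(\pow A, \pow B)$ satisfying $\lambda_\circ(\mu) = \nu$, i.e.\ $\mu \circ \lambda = \nu$.

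Next I would compute the composition $\mu \circ \lambda$ using the definition of composition of relations from Section~\ref{sec:relations}. For any $Y \subseteq B$,
\[
\emptyset \rel{\mu \circ \lambda} Y \iff \exists\, X \subseteq A : \emptyset \rel\mu X \text{ and } X \rel\lambda Y \iff \exists\, X \in \alpha : X \rel\lambda Y.
\]
Hence $\mu \circ \lambda$ is exactly the relation in $\Exch(\pow\emptyset, \pow B)$ whose associated matroid is
\[
\alpha_{\mu \circ \lambda} = \{Y \subseteq B \mid \exists\, X \in \alpha, \ X \rel\lambda Y\}.
\]

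Finally, the equality $\mu \circ \lambda = \nu$ is equivalent to the set equality $\alpha_{\mu\circ\lambda} = \beta$, which splits as two containments: $\alpha_{\mu \circ \lambda} \subseteq \beta$ says precisely that whenever $X \in \alpha$ and $X \rel\lambda Y$ we have $Y \in \beta$, which is condition (a); and $\beta \subseteq \alpha_{\mu \circ \lambda}$ says precisely that every $Y \in \beta$ arises as $X \rel\lambda Y$ for some $X \in \alpha$, which is condition (b). This gives the proposition. The only potential subtlety to flag is that $\alpha$, $\beta$, or $\lambda$ may be zero, but the argument above is valid in all these cases: if $\alpha = \emptyset$ then $\mu = 0_{\emptyset A}$ and $\mu \circ \lambda = 0_{\emptyset B}$, forcing $\beta = \emptyset$, and both (a) and (b) hold vacuously; the other degenerate cases are handled similarly.
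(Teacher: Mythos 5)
Your proof is correct and is exactly the definitional unpacking the paper intends (the proposition is stated with a \qed precisely because it follows by identifying $\alpha,\beta$ with relations $\mu,\nu \in \Exch(\pow\emptyset,\pow A), \Exch(\pow\emptyset,\pow B)$ and rewriting $\mu \circ \lambda = \nu$ as the two containments (a) and (b)). Nothing is missing; the degenerate-case remark is fine but already covered by the general argument.
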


\begin{remark} Many variations on the category $\Mempty$ can be 
formed.  For example, one can restrict the class of objects, 
e.g. by excluding the zero matroids.  Alternatively, one can 
enlarge the category by
allowing morphisms that satisfy condition (a), but not necessarily condition
(b) of Proposition~\ref{prop:catofmatroids}.  However, these constructions are
somewhat unnatural from the point of view of the category $\matroid$, 
and so we will not study them further, here.
\end{remark}

%We could now
%restrict the objects to any subclass of matroids. For example, 
%one can exclude zero matroids (and hence zero morphisms)
%from the category. 
%However, it is arguably more natural to work strictly on the level of 
%the category $\matroid$ and the functors $\Hom_\matroid(S,-)$, 
%so we will not pursue this idea.

A \defn{pointed matroid} is a triple $(E,z_0, \alpha)$, where
$(E,\alpha)$ is matroid, and $z_0 \in E$ is a distinguished
point.
If $S = \{z_0\}$ is a singleton set, the objects of
$\Mpointed$ are identified with pointed matroids.
%(with distinguished point $z_0$).
Specifically, the pointed matroid $(E,z_0, \alpha)$ is identified 
with the
morphism $\mu \in \Mor_\matroid(\{z_0\}, E -z_0)$, 
where for $Y \subseteq E -z_0$, 
$\{z_0\} \rel\mu Y$ if $Y \in \alpha$, and
$\emptyset \rel\mu Y$ if $Y+z_0 \in \alpha$.
In some references, 
the distinguished point of a pointed matroid is required to have additional 
special properties, but here we are not imposing any
such requirement.
%Again, if this is desirable,
%one can always restrict class of objects.

\begin{proposition}
Let $(A, z_0, \alpha)$ and $(B, z_0, \beta)$ be pointed matroids on sets
$A, B$, where $z_0 \in A \cap B$ is the distinguished point.
A morphism $\lambda$ between $\alpha$ and $\beta$ in the category 
$\Mpointed$ is a relation $\lambda \in \Exch(\pow{A}, \pow{B})$ 
such that conditions (a) and (b) of Proposition~\ref{prop:catofmatroids} hold,
and also
\begin{enumerate}[(c)]
\item
for all $X \rel\lambda Y$, we have $z_0 \in X$ if and only if $z_0 \in Y$.
\qed
\end{enumerate}
\end{proposition}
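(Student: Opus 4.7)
The plan is to reduce the statement to an elaboration of Proposition~\ref{prop:catofmatroids}, by identifying each morphism $\lambda'\in\Mor_\matroid(A-z_0,B-z_0)$ of $\Mpointed$ (which is a $\lambda'$ satisfying $\mu_\alpha\circ\lambda'=\mu_\beta$, where $\mu_\alpha,\mu_\beta$ are the encodings of $\alpha,\beta$ from the preamble) with its rigid-monoidal lift
\[
\lambda := 1_{\{z_0\}}\otimes\lambda' \in \Mor_\matroid(A,B) = \Exch(\pow A,\pow B).
\]
Concretely, $X\rel\lambda Y$ iff $z_0\in X\Leftrightarrow z_0\in Y$ and $(X\setminus\{z_0\})\rel{\lambda'}(Y\setminus\{z_0\})$.

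The first step is to verify that $\lambda'\mapsto\lambda$ is a bijection between $\Exch(\pow{A-z_0},\pow{B-z_0})$ and the set of $\lambda\in\Exch(\pow A,\pow B)$ satisfying~(c). In one direction, $1_{\{z_0\}}\otimes\lambda'$ lies in $\Exch(\pow A,\pow B)$ by the product example of Section~\ref{sec:relations}, and manifestly satisfies~(c). Conversely, any $\lambda$ satisfying~(c) descends by restriction to a well-defined $\lambda'$ on $\pow{A-z_0}\times\pow{B-z_0}$; the fact that $\lambda'$ still satisfies the exchange axiom can be checked directly (in an application of the exchange axiom for $\lambda$, condition~(c) forces any $v=z_0$ choice to contradict~(c) on the new related pair, so $v$ must come from $\overline{U_2}\sqcup V_2$ inside $(A-z_0)\sqcup(B-z_0)$), or equivalently by noting that~(c) says that $z_0$ is a coloop of the associated matroid $\alpha_\lambda$, so that $\lambda'$ corresponds to the contraction $\alpha_\lambda/z_0$ on $(A-z_0)\sqcup(B-z_0)$.

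The main step is to show that, under this bijection, $\mu_\alpha\circ\lambda'=\mu_\beta$ is equivalent to the conjunction of (a) and (b) for $\lambda$. Unpack $\mu_\alpha\circ\lambda'=\mu_\beta$ by the two choices $W\in\pow{\{z_0\}}$. The row $W=\{z_0\}$ reads: for $V\subseteq B-z_0$, $V\in\beta$ iff there exists $U\in\alpha$ with $z_0\notin U$ and $U\rel{\lambda'}V$. The row $W=\emptyset$ reads: $V+z_0\in\beta$ iff there exists $U$ with $U+z_0\in\alpha$ and $U\rel{\lambda'}V$. Reinserting $z_0$ via~(c), the row $W=\{z_0\}$ becomes the ``$z_0\notin Y$'' case, and the row $W=\emptyset$ becomes the ``$z_0\in Y$'' case, of the statement: $Y\in\beta$ iff there exists $X\in\alpha$ with $X\rel\lambda Y$. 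Separating the forward and backward implications yields exactly (a) and (b). The main obstacle is just the bookkeeping in this translation---pairing the $W=\emptyset$/$W=\{z_0\}$ cases of the composition equation with the $z_0\in X$/$z_0\notin X$ subcases of conditions (a)/(b)---but it is purely formal, and no new exchange-axiom arguments beyond those in Proposition~\ref{prop:catofmatroids} are required.
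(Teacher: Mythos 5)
Your ``main step'' is fine: for a relation of the product form $\lambda=1_{\{z_0\}}\otimes\lambda'$, unpacking $\mu_\alpha\circ\lambda'=\mu_\beta$ over the two choices $W\in\pow{\{z_0\}}$ does give exactly conditions (a) and (b), and such $\lambda$ visibly satisfy (c); this is the routine bookkeeping the paper omits. The gap is in your first step, where you claim that $\lambda'\mapsto 1_{\{z_0\}}\otimes\lambda'$ is a bijection onto the set of $\lambda\in\Exch(\pow{A},\pow{B})$ satisfying (c). That map is injective, but it is not surjective. Take $A=B=\{z_0\}$ and $\lambda=\elementary{\{z_0\}}{\{z_0\}}$: this lies in $\Exch(\pow{A},\pow{B})$ (elementary relations always do) and satisfies (c), but it has no related pair avoiding $z_0$, whereas $1_{\{z_0\}}\otimes\lambda'$ for nonzero $\lambda'$ has related pairs both with and without $z_0$; so $\lambda$ is not in the image. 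Worse, taking $\alpha=\beta=\{\{z_0\}\}$, this $\lambda$ also satisfies (a) and (b), while the relevant hom-set of $\Mpointed$ (relations $\lambda'\in\Mor_\matroid(\emptyset,\emptyset)$ with $\mu_\alpha\circ\lambda'=\mu_\beta$) consists of the single element $1_\emptyset$. Hence the surjectivity half of your reduction cannot be repaired by additionally invoking (a) and (b): the ``conversely'' sentence of your first step is a step that fails, and with it the direction of the proposition asserting that every relation satisfying (a)--(c) is a morphism under your identification.

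The reason your justification breaks down is that $\alpha_\lambda$ lives on $A\sqcup B$, which contains two copies of $z_0$ (one from $A$, one from $B$). Condition (c) says only that every basis of $\alpha_\lambda$ contains exactly one of these two copies; the product form $1_{\{z_0\}}\otimes\lambda'$ is the strictly stronger condition that, in addition, the two copies are interchangeable (swapping them in any basis yields a basis), equivalently that $X\rel\lambda Y$ if and only if $X\symdif\{z_0\}\rel\lambda Y\symdif\{z_0\}$. In particular your assertion that ``(c) says that $z_0$ is a coloop of $\alpha_\lambda$'' is not correct for either copy: in the example above one copy is a loop and the other a coloop, while for $1_{\{z_0\}}\otimes\lambda'$ neither copy is a loop or a coloop. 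To complete a proof along your lines you must either replace (c) by the swapping-invariance condition in your step-one characterization of the image, or else specify a canonical (necessarily non-injective) way in which an arbitrary relation satisfying (a)--(c) induces a morphism $\lambda'\in\Mor_\matroid(A-z_0,B-z_0)$ and verify the composition identity for it; as written, neither is done.
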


\begin{remark}
We note that these categories are substantially different from 
the matroid categories considered by Heunen and Patta \cite{HP}, 
where the objects are the matroids
and morphisms are strong maps. The constructions are 
nevertheless related. Consider the subcategory of $\matroidS$
in which we restrict the class of morphisms to bimatroids.  
\cite[Theorem 4]{Kung} effectively describes 
a functor from this subcategory to 
the category of matroids with strong maps.  
A natural (and likely not too difficult) question is: 
does this functor extend to all of $\matroidS$?
\end{remark}

Let $S,T$ be finite sets.  For any 
$\kappa \in \Mor_\matroid(S,T)$, we obtain a natural transformation
$\kappa^\circ$ between the functors $\Mor_\matroid(S,-)$ and
$\Mor_\matroid(T,-)$:  for $A \in \Ob_\matroid$, 
$\kappa^\circ : \Mor_\matroid(S,A) \to \Mor_\matroid(T,A)$ is defined
to be the function 
\[
   \kappa^\circ(\mu) = \overline{\kappa}^\dagger \circ \mu
\,.
\]
We call $\kappa^\circ$ a \defn{base change} transformation.
Yoneda's lemma asserts that all natural transformations
between $\Mor_\matroid(S,-)$ and $\Mor_\matroid(T,-)$ are of
this form.

Alternatively, $\kappa^\circ$ can be regarded as a functor
$\kappa^\circ : \matroidS \to \matroidT$.
For objects $\mu \in \Ob_{\matroidS}$, 
$\kappa^\circ(\mu) = \overline{\kappa}^\dagger \circ \mu \in \Ob_{\matroidT}$.
For morphisms $\Mor_{\matroidS} = \Mor_{\matroidT}$, 
$\kappa^\circ$ sends any morphism to itself.

\begin{example}
\label{ex:Smonoidal}
Consider the morphism
$\rho_S \in \Mor_\matroid(S \otimes S, S)$ defined by
$(X,Y) \rel{\rho_S} Z$ if and only if $X \cup Y = S$ and $X \cap Y = Z$.
We define a functor 
$\otimes_S : \matroidS \times \matroidS \to \matroidS$, as the composition
of the functors $\otimes:\matroidS \times \matroidS \to \matroidSS$ and 
$\rho_S^\circ : \matroidSS \to \matroidS$.
With this definition, $(\matroidS, \otimes_S)$ is a symmetric
monoidal category.
For both objects and morphisms we have
$\deg(\mu \otimes_S \nu) = \deg(\mu) + \deg(\nu)$, and
the monoidal unit object is 
$\uniform{0}{S, \emptyset} \in \Ob_{\matroidS}$.
This construction works because $(\rho_S, \uniform{0}{S, \emptyset})$
is a (commutative) monoid in the category $\matroid$.

We can define a second functor
$\barotimes_{S} : \matroidS \times \matroidS \to \matroidS$,
using $\overline \rho_S$ in place of $\rho_S$.  
$(\matroid, \barotimes_S)$ is also a symmetric monoidal category.
In this case, $\deg(\mu \barotimes_S \nu) = \deg(\mu) + \deg(\nu) - |S|$, 
and the monoidal unit object is 
$\uniform{-|S|}{S, \emptyset} \in \Ob_{\matroidS}$.
The functors $\otimes_S$ and $\barotimes_S$ are related by 
$ \mu \barotimes_S \nu = \overline{\overline \mu \otimes_S \overline \nu}$.

Note, however, that base change transformations do not respect 
either monoidal structure.
That is, typically, 
$\kappa^\circ(\mu \otimes_S \nu) \neq
\kappa^\circ(\mu) \otimes_T \kappa^\circ(\nu)$ and
$\kappa^\circ(\mu \barotimes_S \nu) \neq
\kappa^\circ(\mu) \barotimes_T \kappa^\circ(\nu)$, 
as can be seen from the 
fact that the two sides do not necessarily have the same degree.
\end{example}

%%%%%%%%%%%%%%%%%%%%%%%%%%%%%%%%%%%%%%%%%%%%%%%%%%%%%
%%%%%%%%%%%%%%%%%%%%%%%%%%%%%%%%%%%%%%%%%%%%%%%%%%%%%
%%%%%%%%%%%%%%%%%%%%%%%%%%%%%%%%%%%%%%%%%%%%%%%%%%%%%
\section{Deletion, contraction, and symmetrization}
\label{sec:del}

In this section, we explain why deletion and contraction (as they
are usually defined) are nearly, but not quite 
morphisms in the category $\matroid$.
We then discuss symmetrization, which is a more general construction,
and symmetric matroids.
We begin by defining related operations, which we will refer to as
\emph{strict deletion}, and \emph{strict contraction}.

Let $E$ be a finite set, and let $e \in E$.
Consider the morphisms 
\[
\delta^e := 1_{E-e} \otimes \uniform{0}{\{e\},\emptyset} 
\qquad \text{and} \qquad
\chi^e := 1_{E-e} \otimes \uniform{-1}{\{e\},\emptyset} 
\]
in $\Mor_\matroid(E, E-e)$.
Equivalently $\delta^e$ is the relation 
defined by $X \rel{\delta^e} Y$ if and only if $X =Y \subseteq E -e$,
and $\chi^e$ is defined by
$X \rel{\chi^e} Y$ if and only if $X =Y+e$, $Y \subseteq E -e$.
Note that $\delta^e = \smash{\overline{\chi^e}}$.

\begin{definition}
Let $\alpha \in \M(E)$.  The \defn{strict deletion} of $e$
is $\alpha \hdel e :=
\delta^e_\circ (\alpha) = \alpha \circ \delta^e$.
The \defn{strict contraction} of $e$ is $\alpha \hcon e :=
\chi^e_\circ (\alpha) = \alpha \circ \chi^e$.
More explicitly $\alpha \hdel e = \{X \subseteq E-e \mid X \in \alpha\}$,
and $\alpha \hcon e = \{X \subseteq E-e \mid X+e \in \alpha\}$.
\end{definition}

Thus, in the category $\Mempty$, $\delta^e$ is a morphism from 
$\alpha$ to $\alpha \hdel e$, and $\chi^e$ is a morphism from
$\alpha$ to $\alpha \hcon e$.

\begin{proposition}
For $e, f \in E$, $e \neq f$, we have
\[
    \delta^e \circ \delta^f = \delta^f \circ \delta^e
\qquad
    \delta^e \circ \chi^f = \chi^f \circ \delta^e
\qquad
    \chi^e \circ \chi^f = \chi^f \circ \chi^e
\,.
\tag*{\qed}
\]
\end{proposition}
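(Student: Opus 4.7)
The plan is to verify each identity by directly unpacking both sides according to the composition rule for relations. Since the composition of two relations $\lambda \circ \mu$ is characterized by existence of an intermediate element, and since $\delta^e$ and $\chi^e$ are both ``functional'' relations (each $X$ is related to at most one $Y$), the existential quantifier in each composition will be trivially witnessed by a uniquely determined intermediate $Y$, which I then compare to the corresponding intermediate for the reversed composition.

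Concretely, for $\delta^e \circ \delta^f$ the intermediate $Y$ must satisfy $X = Y \subseteq E - e$ and $Y = Z \subseteq E - e - f$, forcing $X = Y = Z \subseteq E - e - f$; running $\delta^f \circ \delta^e$ gives the same constraint on $(X, Z)$. For $\delta^e \circ \chi^f$ one gets $X = Y \subseteq E - e$ and $Y = Z + f$ with $Z \subseteq E - e - f$, hence $X = Z + f$, $Z \subseteq E - e - f$, and $\chi^f \circ \delta^e$ produces the same relation. For $\chi^e \circ \chi^f$ the two steps chain to $X = Z + e + f$, $Z \subseteq E - e - f$, which is manifestly symmetric in $e$ and $f$.

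A more conceptual alternative, which I would mention briefly, uses the monoidal structure: decompose $E = (E - e - f) \sqcup \{e\} \sqcup \{f\}$ and rewrite each of $\delta^e, \chi^e, \delta^f, \chi^f$ as an appropriate tensor product $1_{E - e - f} \otimes (\text{something on }\{e\}) \otimes (\text{something on }\{f\})$, whose ``something'' factors act on disjoint tensor slots. Then the interchange law (functoriality of $\otimes$) immediately gives that these factors commute under $\circ$, yielding all three identities at once.

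I do not anticipate any real obstacle; the only bookkeeping issue is to be careful that in a composition like $\delta^e \circ \delta^f$, the second factor $\delta^f$ denotes $1_{E - e - f} \otimes \uniform{0}{\{f\}, \emptyset} \in \Mor_\matroid(E - e, E - e - f)$, not the original morphism on $E$. Once this is spelled out, the verification in either style is mechanical.
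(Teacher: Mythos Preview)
Your proposal is correct. The paper gives no proof at all for this proposition (it is stated with a terminal \qed), treating the identities as immediate from the definitions; your direct unpacking of the compositions is exactly the kind of verification the paper is implicitly leaving to the reader, and your tensor-product alternative is the conceptual reason behind the tensor-product definitions $\delta^e = 1_{E-e} \otimes \uniform{0}{\{e\},\emptyset}$ and $\chi^e = 1_{E-e} \otimes \uniform{-1}{\{e\},\emptyset}$ in the first place.
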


Strict deletion and contraction can also be viewed as base change 
transformations.

\begin{proposition}
If $\delta = \delta^{z_0}$ and
$\chi = \chi^{z_0} \in \Mor_\matroid(\{z_0\}, \emptyset)$
the associated change of base transformations $\delta^\circ$ and $\chi^\circ$
are $\delta^\circ(\alpha) = \alpha \hdel z_0$
and  $\chi^\circ(\alpha) = \alpha \hcon z_0$,
for every pointed matroid $(E,z_0,\alpha)$.
\qed
\end{proposition}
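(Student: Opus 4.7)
The plan is to directly unpack the base change formula $\kappa^\circ(\mu) = \overline{\kappa}^\dagger \circ \mu$ in the cases $\kappa = \delta$ and $\kappa = \chi$, using the explicit description of a pointed matroid as a morphism $\mu \in \Mor_\matroid(\{z_0\}, E-z_0)$, and matching the resulting associated matroid against the definitions of $\alpha \hdel z_0$ and $\alpha \hcon z_0$ given just before the proposition.

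First I would note that, taking $E = \{z_0\}$ in the definitions of $\delta^e$ and $\chi^e$, the factor $1_{E-z_0}$ becomes $1_\emptyset$ (the monoidal unit) and can be dropped, so $\delta = \uniform{0}{\{z_0\},\emptyset}$ and $\chi = \uniform{-1}{\{z_0\},\emptyset}$. Concretely, $\delta$ consists of the single pair $(\emptyset,\emptyset)$ and $\chi$ consists of the single pair $(\{z_0\},\emptyset)$. Applying the definition $X \rel{\overline{\lambda}} Y \iff \overline{X} \rel{\lambda} \overline{Y}$ with complements taken in $\{z_0\}$ and $\emptyset$ respectively, I would record the key symmetry $\overline{\delta} = \chi$ and $\overline{\chi} = \delta$; taking adjoints, $\overline{\delta}^\dagger \in \Mor_\matroid(\emptyset,\{z_0\})$ consists of only the pair $(\emptyset,\{z_0\})$, while $\overline{\chi}^\dagger$ consists of only $(\emptyset,\emptyset)$.

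Next I would unpack $\delta^\circ(\mu) = \overline{\delta}^\dagger \circ \mu \in \Mor_\matroid(\emptyset, E-z_0)$ using the definition of relational composition. For $Y \subseteq E-z_0$, the condition $\emptyset \rel{\delta^\circ(\mu)} Y$ requires some $Z \subseteq \{z_0\}$ with $\emptyset \rel{\overline{\delta}^\dagger} Z$ and $Z \rel{\mu} Y$; the first clause forces $Z = \{z_0\}$, so the whole condition reduces to $\{z_0\} \rel{\mu} Y$, which by definition of $\mu$ is $Y \in \alpha$. Hence the associated matroid $\alpha_{\delta^\circ(\mu)}$ has bases $\{Y \subseteq E-z_0 \mid Y \in \alpha\}$, which is precisely $\alpha \hdel z_0$.

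The parallel computation for $\chi^\circ(\mu) = \overline{\chi}^\dagger \circ \mu = \delta^\dagger \circ \mu$ instead forces $Z = \emptyset$, reducing the condition $\emptyset \rel{\chi^\circ(\mu)} Y$ to $\emptyset \rel{\mu} Y$, i.e.\ $Y + z_0 \in \alpha$, which gives $\alpha_{\chi^\circ(\mu)} = \alpha \hcon z_0$. Since the whole argument is a sequence of definition unpackings, there is no serious obstacle; the only point that requires any attention is the identification $\overline{\delta} = \chi$, which transparently encodes the deletion/contraction duality and is exactly what forces the base change formula to produce the right matroids on the two sides.
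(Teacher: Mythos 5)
Your computation is correct and is exactly the routine definition-unpacking the paper intends (the proposition is stated with its proof omitted as immediate): identifying $\delta=\uniform{0}{\{z_0\},\emptyset}$, $\chi=\uniform{-1}{\{z_0\},\emptyset}$, noting $\overline{\delta}=\chi$ and $\overline{\chi}=\delta$, and composing $\overline{\kappa}^\dagger$ with the morphism $\mu$ encoding the pointed matroid recovers precisely $\alpha\hdel z_0$ and $\alpha\hcon z_0$. No gaps.
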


Deletion and contraction are almost the same as strict deletion and
strict contraction, with a small but significant difference.
Suppose $(E, \alpha)$ is a matroid.  
A point $e \in E$ is a \defn{loop} of $\alpha$ if 
$e \in \overline X$ for all $X \in \alpha$;
$e$ is a \defn{coloop} of $\alpha$ if $e$
a loop of $\overline \alpha$.
Equivalently, $e$ is a loop of $\alpha$ if and only if
$\alpha \hcon e$ is the zero matroid,
$e$ is a coloop of $\alpha$
if and only if $\alpha \hdel e$ is the zero matroid.
On the other hand deletion and contraction never produce the zero matroid
(unless applied to the zero matroid itself).

\begin{definition}
Let $\alpha \in \M(E)$.  The \defn{deletion} of $e$ is 
$\alpha \del e \in \M(E-e)$, and the \defn{contraction} of $e$
is $\alpha \con e \in \M(E-e)$, where
\begin{align*}
\alpha \del e
&:= 
\begin{cases}
\alpha \hdel e &\text{if $e$ is not a coloop of $\alpha$} \\
\alpha \hcon e &\text{if $e$ is a coloop of $\alpha$,} \\
\end{cases}
\quad
&\quad
\alpha \con e
&:= 
\begin{cases}
\alpha \hcon e &\text{if $e$ is not a loop of $\alpha$} \\
\alpha \hdel e &\text{if $e$ is a loop of $\alpha$.} \\
\end{cases}
\end{align*}
\end{definition}
There are various other ways to define of deletion and contraction which
do not explicitly involve two cases.  
Nevertheless, the dichotomy is always present and is connected to
the behaviour of the rank: 
either 
$\rank(\alpha\del e) = \rank(\alpha)$  or 
$\rank(\alpha\del e) = \rank(\alpha)-1$, and likewise for contraction.
Thus, deletion and contraction cannot be defined by morphisms in the
category $\matroid$, since for any morphism $\lambda \in \Mor_\matroid$,
$\rank(\lambda_\circ(\alpha)) = \deg(\lambda) + \rank(\alpha)$.
Similarly, by Yoneda's lemma, 
they are not natural transformations between the
functors $\Mor_\matroid(\{z_0\},-)$, and functors 
$\Mor_\matroid(\emptyset,-)$.
In the next section, we introduce the category $\Bmatroid$ which
resolves these discrepancies.

In the remainder of the section, 
we discuss some examples of related constructions.

\begin{example}
A \defn{minor} $(F,\beta)$ of a matroid $(E, \alpha)$ is a matroid obtained
by performing a sequence of deletions and contractions.
Equivalently, there is a sequence of
strict deletions and strict contractions to obtain any minor.
Any such sequence determines a morphism from $(E,\alpha)$ to $(F,\beta)$
in the category $\Mempty$, 
which is independent of the order in which 
deletions/contractions are performed.  
Note: this does not mean that there is a ``canonical morphism'' 
from a matroid to any given minor, since different
choices of which points to delete/contract may produce
the same matroid.
\end{example}

\begin{example}
Other well-known matroid constructions, 
such as 2-sum, parallel connection, and 
series connection (see e.g. \cite[\S7.1]{Oxley}), 
have similar interpretations in
the category $\matroid$.  
The operation $2$-sum takes a pair
of pointed matroids $(A,x_0, \alpha)$, $(B, y_0, \beta)$, 
and produces a matroid, while series and
parallel connection take a pair of pointed matroids and produce
another pointed matroid.  
Specifically, if $C = (A - x_0) \sqcup (B-y_0)$,
then the \defn{2-sum} of $\alpha$ and $\beta$ is 
$(C,\gamma)$, 
the \defn{parallel connection} is $(C+z_0, z_0, \gamma')$,
and the \defn{series connection} is
$(C+z_0, z_0, \gamma'')$,
where
\begin{align*}
\gamma &= (\alpha \otimes \beta) \circ 
(1_C \otimes \uniform{-1}{\{x_0,y_0\},\emptyset})
\\
\gamma' &= (\alpha \otimes \beta) \circ 
(1_C \otimes \uniform{-1}{\{x_0,y_0\},\{z_0\}})
\\
\gamma'' &= (\alpha \otimes \beta) \circ 
(1_C \otimes \uniform{0}{\{x_0,y_0\},\{z_0\}})
\,.
\end{align*}
These operations come with the same
caveat as strict deletion and contraction, namely for some inputs 
they produce the zero matroid. 
This is normally not considered to be a major problem; nevertheless
the category $\Bmatroid$ will tell us the canonical way to fix it.

Alternatively, if we think of pointed matroids as objects $\mu, \nu$
of the category $\Mpointed$, then their parallel connection is
$\mu \otimes_{\{z_0\}} \nu$, 
their series connection is 
$\mu \barotimes_{\{z_0\}} \nu$, 
and their $2$-sum is 
$\delta^\circ(\mu \otimes_{\{z_0\}} \nu) 
= \chi^\circ(\mu \barotimes_{\{z_0\}} \nu)$.
These operations are natural transformations between the functors
\[
\Hom_\matroid(\{z_0\}, -) \times \Hom_\matroid(\{z_0\}, -)
\qquad\text{and} \qquad
\Hom_\matroid(S, - \otimes -)
\]
from $\matroid \times \matroid \to \Set$,
where $S =\emptyset$ in the case of $2$-sum, and $S = \{z_0\}$ in
the case of parallel and series connection.  
%By Yoneda's lemma every
%such natural transformation is defined by a morphism
%$\kappa \in \Mor_\matroid(\{x_0, y_0\},S)$.
\end{example}

The preceding constructions are all special cases of symmetrization.

\begin{definition}
\label{def:symmetric}
A matroid $(E,\alpha)$ is \defn{symmetric}
on $S \subseteq E$ if every permutation of $S$ is an automorphism of $\alpha$.
A morphism $\lambda \in \Mor_\matroid(A,B)$ 
is symmetric on $S$, if $S \subseteq A$ or $S \subseteq B$, and
$\alpha_\lambda$ is symmetric on $S$.  
\end{definition}

\begin{example}
Suppose $E$, $T$ are finite sets, 
$S \subseteq E$ and $E' = (E \setminus S) \sqcup T$.  
For $k \in \ZZ$, $\sigma = 1_{E \setminus S} \otimes \uniform{k}{ST} \in 
\Mor_\matroid(E, E')$ is a \defn{symmetrization morphism}.
For any matroid $\alpha \in \M(E)$, 
$\sigma$ transforms $\alpha$ --- symmetrically in $S$ --- into the matroid
$\sigma_\circ(\alpha) \in \M(E')$, which is symmetric on $T$.
When $S = T$, $k=0$, $\sigma_\circ(\alpha)$ is the smallest 
matroid that is symmetric on $S$, such that $\alpha \leq \sigma_\circ(\alpha)$.
Symmetrization can also be regarded as the base change transformation
associated to $\uniform{k}{ST}$.
\end{example}

\begin{example}
\label{ex:symmetric}
We define a rigid monoidal
category $(\Smatroid, \otimes)$, of ``symmetric matroids''.
The objects of $\Smatroid$ are tuples of finite sets.  
For objects $A = (A_1, \dots, A_n)$, and 
$B = (B_1, \dots, B_m)$,  $\Mor_\Smatroid(A,B)$ is the subset of
$\Mor_\matroid(A_1 \otimes \dots \otimes A_n, B_1 \otimes \dots \otimes B_m)$
of morphisms that are symmetric on each of the subsets
$A_1, \dots, A_n, B_1, \dots, B_m$.  We define 
$A \otimes B := (A_1, \dots, A_n, B_1, \dots, B_m)$, and for morphisms
$\otimes$ is defined as in $\matroid$.  It is not hard to see that
the morphisms of $\Smatroid$ are closed under composition.
The identity morphism on $A$ in $\Smatroid$ is the 
symmetrization morphism
\[
    1^\Smatroid_A = 
   \uniform{0}{A_1,A_1} \otimes \dots \otimes \uniform{0}{A_n,A_n}
\,.
\]
Each object $A = A^\dagger$ is its own dual.
The evaluation morphism
$\ev^\Smatroid_A \in \Mor_\Smatroid(A \otimes A, \emptyset)$ is
$\ev^\Smatroid_A =  (1^\Smatroid_A  \otimes 1^\Smatroid_A) \circ 
\ev_{A_1 \otimes \dots \otimes A_n}$; coevaluation is the adjoint morphism.

For a point $e \in A_i$, the morphisms $\chi^e$ and $\delta^e$ are 
not symmetric, and therefore not in $\Smatroid$.  
However, strict deletion and contraction 
are realized in $\Smatroid$ by the symmetrization morphisms
\[
\uniform{0}{A_1, A_1} \otimes \dots \otimes 
\uniform{d}{A_i, A_i-e} \otimes \dots \otimes \uniform{0}{A_n,A_n}
\,,
\]
for $d=0$ and $d=-1$ respectively.
In particular $\Smatroid$ is closed under deletion and contraction.
\end{example}

\begin{example}
\label{ex:MConv}
We now consider symmetric matroids from a different perspective.
Write $\ZZ^E$ for the set of all functions $p: E \to \ZZ$.
For a subset $S \subseteq E$, the characteristic function of
$S$ is the function $\psi_S \in \ZZ^E$, where
$\psi_S(x) = 1$ if $x \in S$, and $\psi_S(x) = 0$ otherwise.
An \defn{M-convex set} on a finite set $E$ is a subset 
$\Gamma \subseteq \ZZ^E$ satisfying
the following exchange axiom:
\begin{itemize}
\item If $p, q \in \Gamma$, $e \in E$ are such that $p(e) > q(e)$, then
there exists $f \in E$ such that 
\[
   p(f) < q(f) \quad{and} \quad
p - \psi_{\{e\}} + \psi_{\{f\}} \in \Gamma
\,.
\]
\end{itemize}
For example, if $(E,\alpha)$ is a matroid, 
the characteristic functions of the bases form an M-convex set on $E$.
See \cite{Murota} for additional background on M-convex sets.

We extend this concept to relations. For a relation 
$\Lambda \in \Rel(\ZZ^E, \ZZ^F)$, let
\[
\Gamma_\Lambda := \{(-p,q) \in \ZZ^E \times \ZZ^F \mid p \rel\Lambda q\}
\,.
\]
We'll say that $\Lambda$ is an \defn{M-convex relation} 
if $\Gamma_\Lambda$ is an M-convex set on $E \sqcup F$.
Let $\Exch(\ZZ^E, \ZZ^F)$ denote the set of all M-convex relations
in $\Rel(\ZZ^E, \ZZ^F)$.

For $\Lambda \in \Rel(\ZZ^E, \ZZ^F)$ and $(p_0, q_0) \in \ZZ^E \times \ZZ^F$,
the \defn{translation} of $\Lambda$ by $(p_0, q_0)$ is the
relation $\Lambda' \in \Rel(\ZZ^E, \ZZ^F)$ defined by
$p \rel{\Lambda'} q$ if and only if $p-p_0 \rel {\Lambda} q-q_0$.
Clearly, $\Lambda$ is M-convex if and only if any translation of $\Lambda$ is
M-convex. For $k,l \geq 0$, let $\Lambda(k,l) \in \Rel(\ZZ^E, \ZZ^F)$
be the relation where $p \rel{\Lambda(k)} q$ if and only if
$p \rel\Lambda q$ and $|p\|_\infty \leq k$, $\|q\|_\infty \leq l$.
Then $\Lambda$ is M-convex if and only if $\Lambda(k,l)$ is M-convex 
for all $k,l \geq 0$.
$\Lambda$ is \defn{bounded} if $\Lambda = \Lambda(k,l)$ for some $k,l \geq 0$.
Bounded M-convex sets are essentially 
the same mathematical objects as integral polymatroids \cite{Edmonds}, 
or integral generalized permutohedra \cite{Postnikov-perm}.

For a non-negative integer $n$, let $[n] := \{1, \dots, n\}$.
For $A = (A_1, \dots, A_n)$ and $B = (B_1, \dots, B_m)$ 
as in Example~\ref{ex:symmetric}, 
and $\mu \in \Mor_\Smatroid(A, B)$, define
the \defn{skeleton} of $\mu$ to be the relation $\Skel(\mu) \in 
\Rel(\ZZ^{[n]}, \ZZ^{[m]})$, where
$p \rel{\Skel(\mu)} q$ if and only if there exists
$(X_1, \dots, X_n) \rel\mu (Y_1, \dots, Y_m)$ such that
$p(i) = |X_i|$ and $q(j) = |Y_j|$, for $i=1, \dots, n$, $j=1, \dots, m$.
Then $\Skel(\mu)$ is a bounded M-convex relation, and
up to translation, every bounded M-convex relation is this form.
We can therefore think of M-convex relations as
limits of morphisms in $\Smatroid$.
To be more precise,
let $[k]^{\otimes n}$ denote the $n$-tuple 
$([k],[k], \dots, [k]) \in \Ob_\Smatroid$.
M-convex relations correspond to double sequences
$(\mu_{k,l})_{k,l \geq 0}$ such that $\mu_{k,l} \in 
\Mor_\Smatroid([2k]^{\otimes n}, [2l]^{\otimes m})$ where
$\mu_{k-1,l}$ is obtained from $\mu_{k,l}$ by deleting all the points labelled
$2k$ in the domain and contracting all the points labelled $2k-1$ 
in the domain, and $\mu_{k,l-1}$ is obtained from $\mu_{k,l}$ by deleting all 
the points labelled $2l$ in the codomain and contracting all the points 
labelled $2l-1$ in the codomain.
Specifically, such a sequence represents the unique 
$\Lambda \in \Exch(\ZZ^{[n]}, \ZZ^{[m]})$ such
that for all $k,l \geq 0$,
$\Lambda(k,l)$ 
is the translation of 
$\Skel(\mu_{k,l})$ 
by $(-k, \dots, -k, -l, \dots, -l)$.
Note that if $\Lambda$, $\Theta$ correspond to the sequences
$(\mu_{k,l})_{k,l \geq 0}$, $(\nu_{k,l})_{k,l \geq 0}$, then 
$\Lambda \circ \Theta$
corresponds to the sequence $(\lambda_{k,l})_{k,l \geq 0}$,
where $\lambda_{k,l} = \max\{\mu_{k,s} \circ \nu_{s,l} \mid s \geq 0\}$.
(The maximum is well-defined in the category $\Smatroid$, since 
$\mu_{k,s} \circ \nu_{s,l} \leq \mu_{k,s+1} \circ \nu_{s+1,l}$.)

From these observations
we see that M-convex relations form 
another rigid monoidal category, which we denote $(\MConv, \otimes)$.  
The objects of $\MConv$ are finite sets and for $E,F \in \Ob_\MConv$,
$\Mor_\MConv(E,F) = \Exch(\ZZ^E, \ZZ^F)$;
composition of morphisms is given by composition
of relations.  The empty relation in $\Rel(\ZZ^E, \ZZ^F)$ is a zero-morphism,
and we denote it by $0_{EF}$.  
As with $\matroid$, the functor $\otimes : \MConv \to \MConv$ is 
given by disjoint union for objects, and $\times$ for relations.
The evaluation morphism $\ev_E : \Mor_\MConv(E \otimes E, \emptyset)$
is the relation $(p,q) \rel{\ev_E} \varepsilon$ if and only if $p+q = 0$, and
the coevaluation morphism $\ev^\dagger_E$ is the adjoint relation.
(Here $\varepsilon$ denotes the unique element of $\ZZ^\emptyset$.)
\end{example}

%%%%%%%%%%%%%%%%%%%%%%%%%%%%%%%%%%%%%%%%%%%%%%%%%%%%%
%%%%%%%%%%%%%%%%%%%%%%%%%%%%%%%%%%%%%%%%%%%%%%%%%%%%%
%%%%%%%%%%%%%%%%%%%%%%%%%%%%%%%%%%%%%%%%%%%%%%%%%%%%%
\section{Lax composition}
\label{sec:lax}

We now attempt to fix the fact that deletion and contraction are
not morphisms in the category $\matroid$, 
by modifying the definition of composition.
We define a new composition operation $\bullet$, by 
a relaxation of the definition of $\circ$.
Going forward, since the term ``composition'' is now ambiguous,
we shall refer to the operation
$\circ$ as \emph{strict composition} or \emph{$\circ$-composition}, and to
$\bullet$ as \emph{lax composition} or \emph{$\bullet$-composition}.
Using lax composition, we construct a new rigid monoidal
category $\Bmatroid$, 
in which deletion and contraction are morphisms.  Furthermore,
the category $\Bmatroid$ does not have any zero morphisms.

\subsection{The category $\Bmatroid$}
\label{sec:lax1}

Recall that if $\lambda \in \Rel(\pow A, \pow B)$, 
$\mu \in \Rel(\pow B, \pow C)$, then
$X \rel{\lambda \circ \mu} Z$ if and only if there exists $Y$
such that $X \rel\lambda Y \rel\mu Z$.  
If no triples $X,Y,Z$ with
$X \rel\lambda Y \rel\mu Z$ exist, then $\lambda \circ \mu  = 0_{AC}$.
Lax composition operation avoids this possibility by relaxing
the criterion $X \rel \lambda Y \rel\mu Z$.

Write 
$\Rel^\times(\pow A, \pow B) := \Rel(\pow A, \pow B) \setminus \{0_{AB}\}$,
and 
$\Exch^\times(\pow A, \pow B) := \Exch(\pow A, \pow B) \setminus \{0_{AB}\}$.

\begin{definition}
Let $\lambda \in \Rel^\times(\pow A, \pow B)$, 
$\mu \in \Rel^\times(\pow B, \pow C)$.
For each non-negative integer $m$, 
let $\lambda \bullet_m \mu \in \Rel(\pow A, \pow C)$ be the relation
defined by $X \rel{\lambda \bullet_m \mu} Z$ if and only if there exist
$Y,Y' \subseteq B$ such that
$X \rel\lambda Y'$, $Y \rel\mu Z$ and $|Y \symdif Y'| = m$. 
Let $m_0$ be the smallest non-negative integer
such that $\lambda \bullet_{m_0}\mu \neq 0_{AC}$ and
define the \defn{lax composition} 
$\lambda \bullet \mu := \lambda \bullet_{m_0} \mu$.
The number $m_0$ is called the \defn{total type} of $\lambda \bullet \mu$
and we write $\totaltype{\lambda}{\mu} := m_0$.
\end{definition}

In particular if $\totaltype{\lambda}{\mu} = 0$ then 
$\lambda \bullet \mu  = \lambda \circ \mu$.
However, if $\totaltype{\lambda}{\mu} \neq 0$, then $\lambda \circ \mu = 0_{AC}$,
whereas by construction $\lambda \bullet \mu \neq 0_{AC}$.

As an operation on all relations, lax composition is 
not associative.
However, when we restrict to relations that satisfy the exchange axiom,
$\bullet$ becomes a well-defined, well-behaved, associative operation.
To understand why this works, we make the following more refined
definition.

\begin{definition}
Let $\lambda \in \Rel^\times(\pow A, \pow B)$, 
$\mu \in \Rel^\times(\pow B, \pow C)$.
For each pair of non-negative integers $(k,l)$, let 
$\lambda \bullet_{k,l} \mu \in \Rel(\pow A, \pow C)$ be the relation
defined by $X \rel{\lambda \bullet_{k,l} \mu} Z$ if and only if there exist
$Y,Y' \subseteq B$ such that $X \rel\lambda Y'$, $Y \rel\mu Z$,
$|Y \setminus Y'| = k$,  $|Y' \setminus Y| = l$.
Let $m_0 = \totaltype{\lambda}{\mu}$ be the total type of 
$\lambda \bullet \mu$.
We say that $\lambda \bullet \mu$ has a \defn{definite type}, 
if there exists a unique pair of non-negative integers
$(k_0,l_0)$ such that
$k_0 + l_0 = m_0$ and
$\lambda \bullet_{k_0,l_0} \mu \neq 0_{AC}$.  
If this condition holds, then 
\[
\lambda \bullet \mu = \lambda \bullet_{k_0,l_0} \mu
\,.
\]
The pair $(k_0,l_0)$ is called 
the \defn{type} of $\lambda \bullet \mu$, and we write
$\type{\lambda}{\mu} := (k_0, l_0)$.
\end{definition}

When $\lambda \bullet \mu$ has a definite type, it can be expressed
in terms of strict composition.

\begin{proposition}
\label{prop:bulletcirc}
Suppose $\lambda \in \Rel^\times(\pow A,\pow B)$, 
$\mu \in \Rel^\times(\pow B,\pow C)$.
Let $\coverlt \in \Exch^\times(\pow B,\pow B)$ be the covering relation
from Example~\ref{ex:covering}.
Let $\covercomp{k,l}$ 
be any $\circ$-composition of $k$ factors of $\coverlt$,
and $l$ factors of $\covergt$, with the factors in any order.
\begin{enumerate}[(i)]
\item 
For all $k, l \geq 0$, we have
$\lambda \bullet_{k,l} \mu \leq \lambda \circ \covercomp{k,l} \circ \mu$.
\item 
If $\lambda \circ \covercomp{k,l} \circ \mu \neq 0_{AC}$ then
$\totaltype{\lambda}{\mu} \leq k+l$.
\item 
$\lambda \bullet \mu$ has type $(k,l)$ if and only if
$k + l = \totaltype{\lambda}{\mu}$ and
\[
     \lambda \circ \covercomp{k,l} \circ
      \mu 
    = \lambda \bullet \mu
\,.
\tag*{\qed}
\]
\end{enumerate}
\end{proposition}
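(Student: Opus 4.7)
Parts (i) and (ii) are the combinatorial core; (iii) follows by combining them with the definition of (definite) type.

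For (i), the plan is an explicit construction. Given witnesses $Y', Y$ for $X \rel{\lambda \bullet_{k,l} \mu} Z$ (so $X \rel\lambda Y'$, $Y \rel\mu Z$, $|Y \setminus Y'| = k$, $|Y' \setminus Y| = l$), I build a chain $Y' = W_0, W_1, \ldots, W_{k+l} = Y$ by walking through the prescribed sequence of $\coverlt$ and $\covergt$ factors defining $\covercomp{k,l}$: on each $\coverlt$-step, add a so-far-unused element of $Y \setminus Y'$ to $W_i$; on each $\covergt$-step, remove a still-present element of $Y' \setminus Y$. Because $Y \setminus Y'$ and $Y' \setminus Y$ are disjoint, every attempted addition lands on an element not currently in $W_i$ and every attempted removal on one that is, so the chain is well-defined and witnesses $Y' \rel{\covercomp{k,l}} Y$; concatenating with $\lambda$ and $\mu$ then gives $X \rel{\lambda \circ \covercomp{k,l} \circ \mu} Z$.

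For (ii), the plan is a counting argument. Given any chain witnessing $Y' \rel{\covercomp{k,l}} Y$, for each $e \in B$ let $a_e$ and $r_e$ count the additions and removals of $e$ along the chain; then $\sum_e a_e = k$ and $\sum_e r_e = l$. Elements of $Y \setminus Y'$ must satisfy $a_e - r_e = 1$, hence $a_e \geq 1$; summing gives $|Y \setminus Y'| \leq k$, and symmetrically $|Y' \setminus Y| \leq l$, so $|Y \symdif Y'| \leq k+l$. Since $X \rel\lambda Y'$ together with $Y \rel\mu Z$ witnesses $\lambda \bullet_{|Y \symdif Y'|} \mu \neq 0_{AC}$, minimality of $\totaltype{\lambda}{\mu}$ yields $\totaltype{\lambda}{\mu} \leq |Y \symdif Y'| \leq k+l$.

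For (iii), the key observation is that the two inequalities in (ii) become equalities precisely when $|Y \symdif Y'| = k+l$, in which case $|Y \setminus Y'| = k$ and $|Y' \setminus Y| = l$ individually. In the $\Rightarrow$ direction, assume the type is $(k,l)$, so $k+l = \totaltype{\lambda}{\mu}$ and $\lambda \bullet \mu = \lambda \bullet_{k,l}\mu$; the inclusion $\lambda \bullet \mu \leq \lambda \circ \covercomp{k,l} \circ \mu$ is (i), while for the reverse, any witness $(Y', Y)$ on the right has $|Y \symdif Y'|$ squeezed between the lower bound $\totaltype{\lambda}{\mu}$ (from $\lambda \bullet_{|Y \symdif Y'|} \mu \neq 0$) and the upper bound $k+l = \totaltype{\lambda}{\mu}$, forcing $(|Y \setminus Y'|, |Y' \setminus Y|) = (k,l)$ by the equality case, and uniqueness of the type then identifies the contribution with $\lambda \bullet \mu$. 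In the $\Leftarrow$ direction, any $(X, Z)$ in the common relation $\lambda \bullet \mu = \lambda \circ \covercomp{k,l} \circ \mu$ picks up a witness $(Y', Y)$ with $|Y \setminus Y'| = k$ and $|Y' \setminus Y| = l$ by the same equality case, giving $\lambda \bullet_{k,l}\mu = \lambda \bullet \mu$, and uniqueness of the definite type pins it down as $(k,l)$.

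The cleanest step is (i); the main obstacle will be the bookkeeping in (iii), where I need to track how the two-sided bound from (ii) is squeezed into exact values of $|Y \setminus Y'|$ and $|Y' \setminus Y|$, and where uniqueness of the type must be invoked to identify this pair with $(k, l)$.
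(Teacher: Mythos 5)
Your parts (i) and (ii), and the forward implication of (iii), are correct. The paper states Proposition~\ref{prop:bulletcirc} with no proof at all (it is tagged \verb|\qed| as immediate from the definitions), so there is no argument to compare against; your chain construction for (i), the multiplicity count $a_e-r_e$ for (ii), and the squeeze $\totaltype{\lambda}{\mu}\leq|Y\symdif Y'|\leq k+l$ forcing a witness of shape exactly $(k,l)$ are precisely the routine verifications being left to the reader.

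The gap is in your backward implication of (iii). The final step, ``uniqueness of the definite type pins it down as $(k,l)$,'' is circular: ``$\lambda\bullet\mu$ has type $(k,l)$'' means, by definition, that $(k,l)$ is the \emph{unique} pair with $k+l=\totaltype{\lambda}{\mu}$ and $\lambda\bullet_{k,l}\mu\neq 0_{AC}$, and nothing in the hypotheses hands you a definite type to appeal to --- the proposition assumes only $\lambda\in\Rel^\times(\pow A,\pow B)$, $\mu\in\Rel^\times(\pow B,\pow C)$, with no exchange axiom. Your squeeze argument genuinely yields that $k+l=\totaltype{\lambda}{\mu}$ forces $\lambda\circ\covercomp{k,l}\circ\mu=\lambda\bullet_{k,l}\mu$, hence $\lambda\bullet_{k,l}\mu=\lambda\bullet\mu$; it does not exclude a second pair $(k',l')$ with $k'+l'=\totaltype{\lambda}{\mu}$ and $\lambda\bullet_{k',l'}\mu\neq 0_{AC}$, and for general relations this can actually occur. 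For instance, take $A=C=\emptyset$, $B=\{1,2\}$, let $\lambda$ relate $\emptyset$ only to $\{1\}$, and let $\mu$ relate exactly $\emptyset$ and $\{1,2\}$ to $\emptyset$: then $\totaltype{\lambda}{\mu}=1$, both $\lambda\bullet_{1,0}\mu$ and $\lambda\bullet_{0,1}\mu$ are non-zero (so there is no definite type), yet $\lambda\circ\covercomp{1,0}\circ\mu=\lambda\bullet\mu$ and $k+l=1$. So the ``if'' direction cannot be completed from the stated hypotheses alone: you must either add the input that $\lambda\bullet\mu$ has a definite type --- which is how the proposition is actually used, since Theorem~\ref{thm:bullet}(i) supplies this when $\lambda,\mu$ satisfy the exchange axiom --- after which your uniqueness appeal is legitimate, or else record the backward implication in the weaker form your argument proves (namely $\lambda\bullet\mu=\lambda\bullet_{k,l}\mu$ whenever $k+l=\totaltype{\lambda}{\mu}$ and $\lambda\circ\covercomp{k,l}\circ\mu\neq 0_{AC}$), which is all that the later sections of the paper rely on. Flag this explicitly rather than letting ``uniqueness of the definite type'' do silent work.
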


We can now formally state the main theorem about lax composition.

\begin{theorem}
\label{thm:bullet}
If $\lambda \in \Exch^\times(\pow A, \pow B)$, 
$\mu \in \Exch^\times(\pow B, \pow C)$,
$\nu \in \Exch^\times(\pow C, \pow D)$, then:
\begin{enumerate}[(i)]
\item $\lambda \bullet \mu$ has a definite type, and hence,
\[
     \lambda \bullet \mu \in \Exch^\times(\pow A, \pow C)
\,.
\]
\item $(\lambda \bullet \mu) \bullet \nu = \lambda \bullet (\mu \bullet \nu)$
\item Types are additive under association:
\[
   \type{\lambda \bullet \mu}{\nu} + \type{\lambda}{\mu}
    =
   \type{\mu}{\nu} +  \type{\lambda}{\mu \bullet \nu}
\,.
\]
\end{enumerate}
\end{theorem}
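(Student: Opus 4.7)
The plan is to prove (i) first (this is done in Section~\ref{sec:dominant} in the paper), and then leverage it to prove (ii) and (iii) (Section~\ref{sec:assoc}).

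For (i), the payoff once definite type is established is immediate: by Proposition~\ref{prop:bulletcirc}(iii), $\lambda \bullet \mu$ equals the strict composition $\lambda \circ \covercomp{k_0,l_0} \circ \mu$, where $\covercomp{k_0,l_0}$ is built from $\coverlt$ and $\covergt$. Since covering relations satisfy the exchange axiom (Example~\ref{ex:covering}) and so does their adjoint, Theorem~\ref{thm:exchcomposition} applied twice places $\lambda\bullet\mu$ in $\Exch^\times(\pow A,\pow C)$. The whole content of (i) is therefore the definiteness of type.

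The key observation is the following degree bookkeeping. Any witness $X\rel\lambda Y'$, $Y \rel\mu Z$ with $|Y\setminus Y'|=k$ and $|Y'\setminus Y|=l$ forces
\[
  |Z|-|X| = \deg(\lambda) + \deg(\mu) + k - l,
\]
because $|Y'|-|X|=\deg\lambda$, $|Z|-|Y|=\deg\mu$, and $|Y|-|Y'|=k-l$. Under the constraint $k+l=m_0=\totaltype{\lambda}{\mu}$, the pair $(k,l)$ is therefore determined by $|Z|-|X|$. So it suffices to prove that the relation $\lambda \bullet \mu$ lives in a single "degree class", i.e.\ that no two witnesses $(X,Z)$ and $(X',Z')$ of $\lambda\bullet\mu$ can have $|Z|-|X|\neq |Z'|-|X'|$. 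My plan is to do this by contradiction: assume two witnesses exist with different $(k,l)$ and $(k',l')$, both summing to $m_0$, and use the exchange axioms for $\lambda$ and $\mu$ to build a third witness with total symmetric difference strictly less than $m_0$, contradicting minimality. This is where the reduction to \emph{dominant morphisms} enters: by composing with suitable $\covercomp{\cdot,\cdot}$ one reduces to the case where neither $Y\setminus Y'$ nor $Y'\setminus Y$ can be shrunk by a local exchange, and in that ``dominant'' case the iterative exchange procedure of Theorem~\ref{thm:exchcomposition} can be run two-sidedly to produce the required smaller witness.

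For (ii), using (i) and Proposition~\ref{prop:bulletcirc}(iii), both $(\lambda\bullet\mu)\bullet\nu$ and $\lambda\bullet(\mu\bullet\nu)$ can be rewritten as fourfold strict compositions
\[
   \lambda \,\circ\, \covercomp{a,b} \,\circ\, \mu \,\circ\, \covercomp{c,d} \,\circ\, \nu
\]
for appropriate $(a,b),(c,d)\ge 0$. Strict composition is associative and each $\covercomp{\cdot,\cdot}$ satisfies the exchange axiom, so the two sides become equal as soon as we verify that the optimal $(a,b,c,d)$ extracted from each bracketing coincide. The counting that forces them to coincide is exactly (iii): the $(k-l)$-coordinate is pinned down by additivity of degrees across the whole composition, and the $(k+l)$-coordinate is pinned down by minimality of the total symmetric difference. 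Collecting these constraints on both bracketings yields the identity
\[
   \type{\lambda\bullet\mu}{\nu}+\type{\lambda}{\mu}
     \;=\; \type{\mu}{\nu}+\type{\lambda}{\mu\bullet\nu},
\]
and simultaneously shows that the two fourfold strict compositions agree.

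The main obstacle is clearly the definiteness of type in (i). The contradiction argument requires running an exchange algorithm simultaneously on the $\lambda$-side and the $\mu$-side while controlling both $|Y\setminus Y'|$ and $|Y'\setminus Y|$ separately (not just their sum); the eight cases of exchangeability and the need to avoid ``trading'' one kind of defect for another is what makes the dominant-morphism reduction essential. Once (i) is in hand, (ii) and (iii) are essentially bookkeeping on top of the already-established associativity of~$\circ$.
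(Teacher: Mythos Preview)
Your degree observation for (i) is correct and is indeed the heart of the matter: under the constraint $k+l=m_0$, the pair $(k,l)$ is determined by $|Z|-|X|$, so definiteness of type is equivalent to $\lambda\bullet\mu$ having a well-defined degree. However, your sketch of the contradiction argument is too vague to be a proof, and your description of the ``dominant'' reduction does not match what the paper actually does. The paper does \emph{not} compose with $\covercomp{\cdot,\cdot}$ to reach a dominant case; rather, it defines a morphism $\mu$ to be dominant if every $y\in B$ is exchangeable with some $x\in A$ in every $X\rel\mu Y$, observes that definiteness of type depends only on $\range(\lambda)$ and the corange of $\mu$, and replaces $\mu$ by a dominant morphism with the same corange (Example~\ref{ex:makedominant}). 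The proof in the dominant case is then carried out not by a two-sided exchange walk, but by an auxiliary construction: one embeds $\lambda$ into a unique \emph{$(S,T)$-general lift} $\tla_{k,l}\in\Mor_\matroid(A\otimes S,B\otimes T)$, shows that strict composition with a dominant $\mu\otimes 1_T$ preserves $(S,T)$-generality, and reads off the type from the lift. Your direct exchange argument may be salvageable, but you have not indicated how to control $|Y\setminus Y'|$ and $|Y'\setminus Y|$ separately rather than just their sum, and that is precisely the difficulty.

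Your treatment of (ii) and (iii) has a genuine gap. You correctly observe that each bracketing produces an expression of the form $\lambda\circ\covercomp{a,b}\circ\mu\circ\covercomp{c,d}\circ\nu$, but the two bracketings give \emph{different} quadruples $(a,b,c,d)$: the left side gives $(a,b)=\type{\lambda}{\mu}$, $(c,d)=\type{\lambda\bullet\mu}{\nu}$, while the right side gives $(a,b)=\type{\lambda}{\mu\bullet\nu}$, $(c,d)=\type{\mu}{\nu}$. Statement (iii) only asserts that the componentwise \emph{sums} agree, not that the quadruples coincide, and there is no reason one can commute $\covercomp{\cdot,\cdot}$ past $\mu$. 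So knowing (iii) does not by itself give (ii), and your argument for (iii) is circular in any case: the ``$(k+l)$-coordinate is pinned down by minimality'' presupposes that both bracketings realise the same global minimum, which is exactly what needs proof. The paper handles this with substantial exchange-axiom work: Lemma~\ref{lem:specialassoc} treats the base case $\lambda\circ\mu\neq 0$, $\mu\circ\nu\neq 0$, $\lambda\circ\mu\circ\nu=0$ by hand; Lemma~\ref{lem:steps} provides a step-down mechanism; and Lemma~\ref{lem:almost} runs an induction on total type to show $(\lambda\circ\mu)\bullet\nu$ and $\lambda\bullet(\mu\circ\nu)$ have the same type and agree. Parts (ii) and (iii) are then deduced simultaneously by substituting $\lambda'=\lambda\circ\covercomp{\type{\lambda}{\mu}}$ and $\nu'=\covercomp{\type{\mu}{\nu}}\circ\nu$. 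This is not bookkeeping on top of associativity of~$\circ$; it is a second round of exchange arguments comparable in depth to the proof of (i).
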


The proof of Theorem~\ref{thm:bullet} is deferred:
part (i) is proved in Section~\ref{sec:dominant}, 
and parts (ii) and (iii) are proved in Section~\ref{sec:assoc}.
We give a strengthening of part (i) in 
Section~\ref{sec:structure} (Theorem~\ref{thm:structure}).
In the meantime, we use lax composition to define
the category $\Bmatroid$.

\begin{definition}
Let $\Bmatroid$ be the category in which $\Ob_\Bmatroid$ is the
class of finite sets, and for $A,B \in \Ob_\Bmatroid$, 
$\Mor_\Bmatroid(A,B) = \Exch^\times(\pow A, \pow B)$
is the set of non-zero relations which satisfy the exchange axiom.
Composition of morphisms is given by the operation $\bullet$.
We define the monoidal structure
$\otimes : \Bmatroid \times \Bmatroid \to \Bmatroid$ as
in the category $\matroid$.
\end{definition}

\begin{remark}
The category $\Bmatroid$ has all of properties of
$\matroid$ described in Remark~\ref{rmk:properties}, with the
exception
that $\Bmatroid$ has no zero morphisms.
\end{remark}

\begin{example}
\label{ex:Brepresentable}
Let $\FF$ be an infinite field. 
Recall from Example~\ref{ex:representable}
that the matroids representable over $\FF$ define a rigid subcategory
$\matroid(\FF)$ of $\matroid$.
The covering relations $\coverlt$ are representable over any
infinite field.
Therefore, by Proposition~\ref{prop:bulletcirc}
the class of non-zero morphisms of $\matroid(\FF)$
is closed under $\bullet$-composition.  Hence we have a rigid
subcategory $\Bmatroid(\FF)$ of $\Bmatroid$, where the morphisms are
the non-zero morphisms of $\matroid(\FF)$, and composition of morphisms
is defined by $\bullet$.  As with $\circ$, 
this is false for finite fields.
\end{example}

\begin{example}
\label{ex:Bgammoid}
Similarly, for gammoids (see Example~\ref{ex:bic}), 
we can define a rigid category $(\Bgammoid, \otimes)$,
in which the morphisms are non-zero morphisms of $\gammoid$,
and composition of morphisms is given by $\bullet$.  As in
Example~\ref{ex:Brepresentable} this follows from
Proposition~\ref{prop:bulletcirc},
using the fact that the covering relations are in the category $\gammoid$,

We give an example of a non-trivial (and reasonably natural)
functor to $\Bgammoid$.
First, we extend the category $\DGraph^*$ (see Example~\ref{ex:rigidpath})
to a larger category $\EDGraph$, which allows graphs with
extra half-edges that are not in the domain or codomain of the morphism.
Define $\EDGraph$ as follows:
$\Ob_\EDGraph = \Ob_{\DGraph^*}$ 
is the class of signed finite sets; a morphism
$G \in \Mor_\EDGraph(A,B)$ is a pair $G = (S_G, K_G)$, where
$S_G = (S_G^+, S_G^-) \in \Ob_{\DGraph^*}$ and
$K_G \in \Mor_{\DGraph^*}(A \otimes S_G, B)$
is a directed graph.  (The elements of $S_G$ are the aforementioned
``extra half-edges''.)
We compose morphisms according to the same basic rule as 
$\DGraph^*$:
for $G \in \Mor_\EDGraph(A,B)$, $H \in \Mor_\EDGraph(B,C)$ 
$G \circ H = (S_G \otimes S_H, K_G \sqcup_B K_H)$.
$\DGraph^*$ is identified with the subcategory
of $\EDGraph$ of morphisms of the form $G = (\emptyset, G)$.

We also extend the monoidal functor:
for objects,  $\otimes : \EDGraph \times \EDGraph \to \EDGraph$
is the same as for $\DGraph^*$; for morphisms,
$G \otimes H = (S_G \otimes S_H, K_G \otimes K_H)$.
The monoidal category 
$(\EDGraph, \otimes)$ is rigid, with the (co)evaluation morphisms
inherited from $(\DGraph^*, \otimes)$.

Now define a rigid functor $\Path^\bullet : \EDGraph \to \Bgammoid$, 
as follows.
For objects, $\Path^\bullet(A) := \Path^*(A) = A^+ \sqcup A^-$, 
which is $A$ regarded as an unsigned set 
(to avoid cumbersome notation we will write
$\Path^\bullet(A) = A$, though it is worth remembering
that this is not actually an identity map).
For a morphism $G \in \Mor_\EDGraph(A,B)$, 
we have $\Path^*(K_G) \in \Mor_\Bgammoid(A \otimes S_G, B)$;
we define $\Path^\bullet(G)  \in  \Mor_\Bgammoid(A, B)$ to be
\[
  \Path^\bullet(G) := 
(1_A 
\otimes \uniform{|S_G^+|}{\emptyset, S_G^+} 
\otimes \uniform{0}{\emptyset, S_G^-})
\bullet \Path^*(K_G)
\,.
\]
The associated matroid of $\Path^\bullet(G)$ is the minor of the associated
matroid of $\Path^*(K_G)$ obtained 
by deleting all points in $S_G^+$ (extra sources) and contracting
all points in $S_G^-$ (extra sinks).
This corresponds to considering tuples of vertex-disjoint paths in
$K_G$ in which the maximum possible number of sources and sinks in $S_G$
are saturated.
For example, the definition of a gammoid on an finite set $E$ 
(see in Example~\ref{ex:path})
amounts to a matroid of the form $\Path^\bullet(G)$, where
$G \in \Mor_\EDGraph(\emptyset, E)$ and $S_G^- = \emptyset$.
Note that $\Path^\bullet$ does not define a functor to $\gammoid$.
The functoriality of $\Path^\bullet : \EDGraph \to \Bgammoid$ 
is a consequence of 
Theorem~\ref{thm:bullet}(ii).
\end{example}

\begin{example}
\label{ex:BMConv}
Consider the categories $\Smatroid$ and $\MConv$ defined in 
Examples~\ref{ex:symmetric} and~\ref{ex:MConv}.
By Proposition~\ref{prop:bulletcirc}, lax composition 
gives a well-defined, associative operation on
the non-zero morphisms of $\Smatroid$, and hence also on the
non-zero morphisms of $\MConv$ (as the latter can be regarded
as sequences in the former).  We therefore
have rigid monoidal categories 
$(\BSmatroid, \otimes)$ and $(\BMConv, \otimes)$, where the objects
are finite sets, the morphisms are the non-zero morphisms of
$\Smatroid$ and $\MConv$ respectively, and $\bullet$ is the composition
operation.

For $\BMConv$, we now describe $\bullet$ more explicitly.  For 
$\Lambda \in \Rel(\ZZ^E, \ZZ^F)$, $\Theta \in \Rel(\ZZ^F, \ZZ^G)$,
let $\Lambda \bullet_m \Theta$ be the relation where 
$p \rel{\Lambda \bullet_m \Theta} r$ if there exists $q, q' \in \ZZ^F$,
such that 
\begin{equation}
\label{eq:pqr}
p \rel\Lambda q'\,,\quad q \rel\Theta r\,,\quad
\text{and}\quad
\|q - q'\|_1 = m
\,.
\end{equation}
$\Lambda \bullet \Theta := 
\Lambda \bullet_{m_0} \Theta$, where 
$m_0$ is
the least non-negative integer such that 
$\Lambda \bullet_{m_0} \Theta \neq 0_{EG}$.

For $s \in \ZZ^F$, let
$s^+\,,\, s^-  \in \ZZ^F$ denote the functions 
\[
  s^+(f) = \max(s(f), 0)\qquad
  s^-(f) = \min(s(f), 0)
  \,.
\]
If $\Lambda$ and $\Theta$ are M-convex then 
$\Lambda \bullet \Theta$
has a well-defined type $(k,l)$, characterized by the following property:
whenever \eqref{eq:pqr} holds for $m = m_0$,
we have $k = \|(q-q')^+\|_1$ and $l = \|(q-q')^-\|_1$.
\end{example}

\subsection{Deletion and contraction in $\Bmatroid$ and related categories}
\label{sec:lax2}
As with $\matroid$, we may consider the covariant hom-functor,
$\Hom_\Bmatroid(S,-)$, for a fixed finite set $S$. 
For a morphism $\lambda \in \Mor_\Bmatroid(A,B)$, we write
$\Hom_\Bmatroid(S,\lambda) = \lambda_\bullet$, where
$\lambda_\bullet : \Mor_\matroid(S,A) \to \Mor_\matroid(S,B)$,
is the function defined by
\[
  \lambda_\bullet(\mu) = \mu \bullet \lambda
\]
for $\mu \in \Mor_\Bmatroid(S,A)$.
Let $\BmatroidS$ be the category in which
$\Ob_{\BmatroidS}$ is the class of morphisms of $\Bmatroid$ with domain $S$, 
and $\Mor_{\BmatroidS}(\mu, \nu)$ is the set of morphisms
$\lambda$ such that $\lambda_\bullet(\mu) = \nu$.
In particular if $S =\emptyset$ we obtain
a category in which the objects themselves are matroids,
and the zero matroid is now naturally excluded.
We note that we now have a functor 
\[ 
  \Type : \BmatroidS \to \ZZ \times \ZZ
\,,
\]
where we regard the additive 
group $\ZZ \times \ZZ$ as a category with one object.
The functor $\Type$ sends all objects of $\BmatroidS$ to the
unique object of $\ZZ \times \ZZ$, 
and for $\lambda \in \Mor_{\BmatroidS}(\mu, \nu)$, 
$\Type(\lambda) = \type{\mu}{\lambda} \in \ZZ \times \ZZ$.

\begin{proposition}
If $|S| \geq 1$, then the 
covariant hom-functor $\Hom_\Bmatroid(S,-)$ is injective.
\end{proposition}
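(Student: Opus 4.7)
The plan is to mimic the short proof of injectivity for $\Hom_\matroid(S,-)$: for distinct $\lambda, \lambda' \in \Mor_\Bmatroid(A,B)$, I will construct a probe $\mu \in \Mor_\Bmatroid(S,A)$ with $\mu \bullet \lambda \neq \mu \bullet \lambda'$. Fix $z_0 \in S$, available since $|S| \geq 1$. As $\lambda \neq \lambda'$ as relations, pick $X \subseteq A$ and $Y \subseteq B$ with $X \rel\lambda Y$ but $X \not\rel{\lambda'} Y$ (swapping $\lambda, \lambda'$ if necessary); write $D(\lambda) := \{X' : \exists Z,\ X' \rel\lambda Z\}$.

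If $X \in D(\lambda) \cap D(\lambda')$, take $\mu := \elementary{\{z_0\}}{X}$: both $\mu \circ \lambda$ and $\mu \circ \lambda'$ are nonzero, so Proposition~\ref{prop:bulletcirc} gives both $\bullet$-compositions type $(0,0)$ and equal to the strict compositions, whose $\{z_0\}$-rows are $\{Z : X \rel\lambda Z\}$ and $\{Z : X \rel{\lambda'} Z\}$, differing at $Y$. This case does not genuinely require $|S| \geq 1$; the essential case is the next.

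The essential case is $X \in D(\lambda) \setminus D(\lambda')$. Assume first $X \neq A$, and let $\mu$ be the matroid on $S \sqcup A$ given by the direct sum of the coloop matroid on $X$ with the rank-one uniform matroid on $\{z_0\} \cup (A \setminus X)$ (treating $S \setminus \{z_0\}$ as loops). Its bases are $\{X \cup \{z_0\}\} \cup \{X \cup \{e\} : e \in A \setminus X\}$, and as a morphism $\mu$ has $\emptyset \rel\mu X$ and $\{z_0\} \rel\mu (X \cup \{e\})$ for each $e \in A \setminus X$. Since $X \in D(\lambda)$, the total type of $\mu \bullet \lambda$ is zero and its $\emptyset$-row contains $\{Z : X \rel\lambda Z\} \ni Y$. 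There are two sub-cases for $\mu \bullet \lambda'$. If some $X \cup \{e\} \in D(\lambda')$, then $\mu \bullet \lambda'$ has total type zero realized on the $\{z_0\}$-row, and its $\emptyset$-row is empty (requiring $X \in D(\lambda')$), immediately distinguishing the two compositions. Otherwise, no $X \cup \{e\}$ lies in $D(\lambda')$; applying the exchange axiom to bases of $\alpha_{\lambda'}$ with $A$-parts $\overline{X \cup \{e\}}$ and $\overline{X \setminus \{a\}}$ shows that the joint presence of both in $D(\lambda')$ would force $X \in D(\lambda')$; in particular if some $X \setminus \{a\}$ lies in $D(\lambda')$, one switches to the dual probe with bases $\{X\} \cup \{(X \setminus \{a\}) \cup \{z_0\} : a \in X\}$ to argue symmetrically. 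The boundary case $X = A$ is handled entirely by this dual probe.

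The main obstacle is the remaining sub-case in which neither $X \cup \{e\}$ nor $X \setminus \{a\}$ lies in $D(\lambda')$ --- that is, $X$ is ``fully isolated'' from $D(\lambda')$. Then both $\mu \bullet \lambda'$ and the dual probe's output have total type at least $2$, and the $\emptyset$-rows could a priori coincide with that of $\mu \bullet \lambda$. To rule this out one must carefully compare the types $(k',l')$ and the resulting degrees via Proposition~\ref{prop:bulletcirc} (generically yielding distinct ranks, hence distinct output matroids), supplemented by iterated use of the matroid exchange axiom on $\alpha_{\lambda'}$ to preclude any accidental coincidence.
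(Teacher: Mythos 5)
Your reduction to the case $X \in D(\lambda)\setminus D(\lambda')$ is sensible, the two easy sub-cases are handled correctly, and the exchange argument showing that $X\cup\{e\},\,X\setminus\{a\}\in D(\lambda')$ would force $X\in D(\lambda')$ is right. The genuine gap is the final ``fully isolated'' sub-case, and the sketch you give there cannot be completed as stated. Degree counting settles nothing: $\deg(\mu\bullet\lambda')=\deg(\mu)+\deg(\lambda')+k'-l'$ can perfectly well equal $\deg(\mu\circ\lambda)$, since $\lambda$ and $\lambda'$ need not have the same degree. Worse, the ``accidental coincidence'' you hope to preclude really occurs for your primary probe: take $\lambda=\elementary{X}{Y}$ and $\lambda'=\elementary{X\setminus L}{Y}$ with $L\subseteq X$, $|L|=2$. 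Then $D(\lambda')=\{X\setminus L\}$ meets no set within Hamming distance one of $X$, the total type of $\mu\bullet\lambda'$ is $2$ (definite type $(0,2)$, realized only with $V'=X$), and $\mu\bullet\lambda'$ and $\mu\circ\lambda$ are literally the same relation: each relates only the subset of $S$ that your probe sends to $X$, to $Y$. So in this sub-case your primary probe can genuinely fail to distinguish, and the whole burden is to prove that some probe (here the dual one) always succeeds; neither ``generically distinct ranks'' nor an unspecified ``iterated use of the exchange axiom'' does that. The dichotomy that actually works is not whether distance-one neighbours of $X$ lie in $D(\lambda')$, but the shape of the nearest sets: if some $V_0\in D(\lambda')$ at minimal distance $d$ from $X$ has $V_0\setminus X\neq\emptyset$, then the primary probe's lax composition with $\lambda'$ has total type at most $d-1$, so the row indexed to $X$ is empty while the corresponding row of $\mu\circ\lambda$ contains $Y$; if every nearest $V_0$ is a proper subset of $X$, the same argument empties the $X$-row of the dual probe's composition with $\lambda'$. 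As written, your proposal does not establish injectivity.

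For comparison, the paper sidesteps the search for a single distinguishing probe altogether: fixing $s\in S$, it recovers $\lambda$ from $\lambda_\bullet$ via the family $\sigma_{x,X}=\uniform{0}{\{s\},\{x\}}\otimes\elementary{\emptyset}{X-x}$, using the characterization that $X\rel\lambda Y$ holds if and only if $\{s\}\rel{\lambda_\bullet(\sigma_{x,X})}Y$ for all $x\in X$ and $\emptyset\rel{\lambda_\bullet(\sigma_{x,X+x})}Y$ for all $x\in\overline X$. The mechanism is exactly what your single probe lacks in the far regime: if $X\notin D(\lambda)$, a nearest element of $D(\lambda)$ either omits some $x\in X$ or contains some $x\notin X$, and the corresponding member of the family then has strictly smaller total type, so that condition fails. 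Quantifying over the whole family makes the argument uniform; your route can be repaired along the lines indicated above, but it needs that additional idea, which the proposal does not supply.
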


\begin{proof}
Suppose $\lambda \in \Mor_\Bmatroid(A,B)$.
We show that it is possible to recover $\lambda$ from $\lambda_\bullet$.

Fix an element $s \in S$.  
For each pair $(x,X)$, with $X \subseteq A$, $x \in X$,
consider the relation
$\sigma_{x,X} \in \Hom_\Bmatroid(S,A) = 
\Mor_\Bmatroid(\{s\} \otimes (S-s), \{x\} \otimes (A-x))$,
$\sigma_{x,X} = \uniform{0}{\{s\},\{x\}} \otimes 
\elementary{\emptyset}{X-x}$.
We have $X \rel{\lambda} Y$ if and only if the following conditions hold: 
for all $x \in X$, $\{s\} \rel{\lambda_\bullet(\sigma_{x,X})} Y$,
and for all $x \in \overline X$, 
$\emptyset \rel{\lambda_\bullet(\sigma_{x,X+x})} Y$.
\end{proof}

For $\kappa \in \Mor_\Bmatroid(T,S)$, we define base change transformations 
$\kappa^\bullet$ between the functors $\Hom_\Bmatroid(S,-)$ and 
$\Hom_\Bmatroid(T,-)$,.
For any object $A \in \Ob_\Bmatroid$,
$\kappa^\bullet : \Mor_\Bmatroid(S,A) \to \Mor_\Bmatroid(T,A)$ is the 
function
\[
   \kappa^\bullet(\mu) = \overline{\kappa}^\dagger \bullet \mu
\,.
\]
%and $\kappa^\bullet$ maps any morphism to itself.
Alternatively, we can regard $\kappa^\bullet$ as a functor 
$\kappa^\bullet : \BmatroidS \to \BmatroidT$.

\begin{example}
The construction in Example \ref{ex:Smonoidal} works equally well for
the category $\Bmatroid$.  Hence $(\BmatroidS, \otimes_S)$ is a monoidal
category, where $\otimes_S : \BmatroidS \times \BmatroidS \to \BmatroidS$ 
is the composition of functors $\otimes$ and $\rho_S^\bullet$.
\end{example}

The category $\Bmatroid$ fixes the earlier problem with 
deletion and contraction.  Deletion and contractions are defined
by 
$\delta^e$, $\chi^e$, working in the category
$\Bmatroid$.
\begin{proposition}
Let $(E, \alpha)$ be a matroid, and $e \in E$.  Then 
$\delta^e_\bullet(\alpha) = \alpha\del e$, and
$\chi^e_\bullet(\alpha) = \alpha\con e$.
\qed
\end{proposition}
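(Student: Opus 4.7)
The plan is to compute $\alpha \bullet \delta^e$ and $\alpha \bullet \chi^e$ directly from the definition of $\bullet_{k,l}$, then match the outcome against the two cases in the definition of deletion and contraction. Identify $\alpha$ with the morphism in $\Mor_\Bmatroid(\emptyset, E)$ given by $\emptyset \rel\alpha Y \iff Y \in \alpha$. Since $Y \rel{\delta^e} Z$ iff $Y = Z \subseteq E-e$, unpacking definitions yields: $\emptyset \rel{\alpha \bullet_{k,l} \delta^e} Z$ iff $Z \subseteq E-e$ and there exists $Y' \in \alpha$ with $|Z \setminus Y'| = k$ and $|Y' \setminus Z| = l$. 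Similarly, since $Y \rel{\chi^e} Z$ iff $Y = Z+e$ with $Z \subseteq E-e$: $\emptyset \rel{\alpha \bullet_{k,l} \chi^e} Z$ iff $Z \subseteq E-e$ and there exists $Y' \in \alpha$ with $|(Z+e) \setminus Y'| = k$ and $|Y' \setminus (Z+e)| = l$. By Theorem~\ref{thm:bullet}(i), the type $(k_0, l_0)$ realizing the total type is unique, so $\delta^e_\bullet(\alpha)$ is determined as soon as we identify any single pair $(k, l)$ minimizing $k+l$ and describe the set of $Z$'s that are achieved.

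For $\delta^e$: if $e$ is \emph{not} a coloop, pick $Y' \in \alpha$ with $e \notin Y'$; then $Z = Y'$ realizes $(k,l) = (0,0)$, so $\totaltype{\alpha}{\delta^e} = 0$ and $\alpha \bullet \delta^e = \alpha \circ \delta^e = \alpha \hdel e = \alpha \del e$. If $e$ \emph{is} a coloop, every $Y' \in \alpha$ contains $e$, so $e \in Y' \setminus Z$ forces $l \geq 1$; meanwhile $Z = Y' - e$ realizes $(k,l) = (0,1)$, hence $\type{\alpha}{\delta^e} = (0,1)$ and the set of $Z$'s achieved is $\{Y' - e \mid Y' \in \alpha\} = \alpha \hcon e = \alpha \del e$.

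For $\chi^e$: if $e$ is \emph{not} a loop, pick $Y' \in \alpha$ with $e \in Y'$; then $Z = Y' - e$ realizes $(k,l) = (0,0)$, so $\alpha \bullet \chi^e = \alpha \circ \chi^e = \alpha \hcon e = \alpha \con e$. If $e$ \emph{is} a loop, every $Y' \in \alpha$ omits $e$, so $e \in (Z+e) \setminus Y'$ forces $k \geq 1$; meanwhile $Z = Y'$ realizes $(k,l) = (1,0)$, hence $\type{\alpha}{\chi^e} = (1,0)$ and the set of $Z$'s achieved is $\alpha = \alpha \hdel e = \alpha \con e$.

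There is no substantive obstacle — the proof is a direct unpacking of $\bullet$ combined with the loop/coloop dichotomy already built into the definition of $\del$ and $\con$. The only point requiring care is the bookkeeping in the loop/coloop cases: in each, precisely one of $k$ or $l$ is forced to be at least $1$ (and the other can be taken to be $0$), and this asymmetry is exactly what produces the ``switch'' from $\hdel$ to $\hcon$ (or vice versa) that distinguishes $\del, \con$ from $\hdel, \hcon$.
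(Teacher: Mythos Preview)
Your proof is correct and is exactly the direct verification the paper has in mind; the paper in fact gives no proof at all (the $\qed$ immediately follows the statement), treating the result as an immediate unpacking of the definitions of $\bullet_{k,l}$, $\delta^e$, $\chi^e$, and the loop/coloop cases of $\del$ and $\con$. Your case analysis fills in precisely those routine details.
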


Similarly, considering pointed matroids, deletion and
contraction are given by base change transformations.

\begin{proposition}
If $\delta = \delta^{z_0}$ and
$\chi = \chi^{z_0} \in \Mor_\matroid(\{z_0\}, \emptyset)$
then 
$\delta^\bullet(\alpha) = \alpha \del z_0$
and  $\chi^\bullet(\alpha) = \alpha \con z_0$
for every pointed matroid $(E,z_0,\alpha)$.
\qed
\end{proposition}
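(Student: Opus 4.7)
The plan is to compute the base change transformations $\delta^\bullet$ and $\chi^\bullet$ directly, by enumerating the possible types $(k,l)$ in each lax composition and matching the output against the piecewise definitions of $\del$ and $\con$. Throughout, the pointed matroid $(E,z_0,\alpha)$ is identified with the morphism $\mu \in \Mor_\Bmatroid(\{z_0\}, E-z_0)$ as described earlier, so that $\{z_0\} \rel\mu Z$ iff $Z \in \alpha$, and $\emptyset \rel\mu Z$ iff $Z+z_0 \in \alpha$.

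First, I would identify $\overline\delta^\dagger$ and $\overline\chi^\dagger$ explicitly. Since $\delta = \uniform{0}{\{z_0\},\emptyset}$ consists of the single related pair $\emptyset \rel\delta \emptyset$, applying the bar and dagger operations yields $\overline\delta^\dagger \in \Mor_\Bmatroid(\emptyset,\{z_0\})$ with unique related pair $\emptyset \rel{\overline\delta^\dagger} \{z_0\}$. Similarly, $\chi = \uniform{-1}{\{z_0\},\emptyset}$ consists of the single pair $\{z_0\} \rel\chi \emptyset$, and a direct check gives $\overline\chi^\dagger \in \Mor_\Bmatroid(\emptyset,\{z_0\})$ with unique related pair $\emptyset \rel{\overline\chi^\dagger} \emptyset$.

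Next, I would unpack $\overline\delta^\dagger \bullet_{k,l} \mu$. The constraints $X \rel{\overline\delta^\dagger} Y'$ force $X = \emptyset$ and $Y' = \{z_0\}$; then the choices $Y = \{z_0\}$ and $Y = \emptyset$ produce types $(k,l) = (0,0)$ and $(0,1)$ respectively, with no other types possible. Using the definition of $\mu$, these two cases yield the relations $\emptyset \rel{\cdot} Z$ with $Z$ ranging over $\alpha \hdel z_0$ and $\alpha \hcon z_0$ respectively. The lax composition selects the smallest total type at which the output is non-zero: this is $(0,0)$ exactly when $z_0$ is not a coloop, giving $\delta^\bullet(\alpha) = \alpha \hdel z_0 = \alpha \del z_0$; and $(0,1)$ when $z_0$ is a coloop, giving $\delta^\bullet(\alpha) = \alpha \hcon z_0 = \alpha \del z_0$, matching the piecewise definition of $\del$.

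The argument for $\chi^\bullet$ is entirely parallel. Now $X = \emptyset$ forces $Y' = \emptyset$, so the choices $Y = \emptyset$ and $Y = \{z_0\}$ give types $(0,0)$ and $(1,0)$, producing $\alpha \hcon z_0$ and $\alpha \hdel z_0$ respectively; the minimal total type is $(0,0)$ when $z_0$ is not a loop and $(1,0)$ when it is, so in either case the output matches $\alpha \con z_0$. The only real task is bookkeeping — ensuring the identification between the pointed matroid and the morphism $\mu$ is applied consistently, and that the branch of minimal type lines up with the loop/coloop dichotomy baked into the definitions of $\del$ and $\con$ — so there is no substantive obstacle beyond careful translation between the matroid and relation viewpoints.
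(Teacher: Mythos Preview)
Your proposal is correct and is exactly the direct unpacking of definitions that the paper has in mind; the paper gives no explicit proof (just \qed), treating the statement as immediate, and your computation spells out precisely that verification.
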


%%%%%%%%%%%%%%%%%%%%%%%%%%%%%%%%%%%%%%%%%%%%%%%%%%%%%
%%%%%%%%%%%%%%%%%%%%%%%%%%%%%%%%%%%%%%%%%%%%%%%%%%%%%
%%%%%%%%%%%%%%%%%%%%%%%%%%%%%%%%%%%%%%%%%%%%%%%%%%%%%

\section{Structure theorem}
\label{sec:structure}

In this section we prove a theorem that gives a finer
description of the structure of lax composition.
Recall the partial identity relations $\partialid{P,Q}{A}$,
defined in Example~\ref{ex:partialidentity}.

\begin{theorem}
\label{thm:structure}
Suppose $\lambda \in \Mor_\Bmatroid(A,B)$, $\mu \in \Mor_\Bmatroid(B,C)$,
and $\lambda \bullet \mu$ has type $(k,l)$.  Then there exist
disjoint sets $K,L \subseteq B$ with $|K| =k$, $|L| = l$ such that
\begin{equation}
\label{eq:structure}
    \lambda \bullet \mu = \lambda \circ \partialid{K,L}{B} \circ \mu
\,.
\end{equation}
Furthermore, if $K,L \subseteq B$ are disjoint with
$|K| =k$, $|L| = l$, and
$\lambda \circ \partialid{K,L}{B} \circ \mu \neq 0_{AC}$, 
then \eqref{eq:structure} holds.
\end{theorem}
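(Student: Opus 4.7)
The plan is to first decompose $\lambda \bullet \mu$ as a union of compositions through partial identities via Proposition~\ref{prop:bulletcirc}(iii), isolate the size-$(k,l)$ contributions using minimality of the total type, and then apply the exchange axiom iteratively to prove that any single non-trivial contribution already equals $\lambda \bullet \mu$.

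For the existence of some $(K,L)$, I would invoke Proposition~\ref{prop:bulletcirc}(iii) with the explicit choice $\covercomp{k,l} = \coverlt^k \circ \covergt^l$. Unpacking the definitions of $\coverlt$ and $\covergt$, one sees that, as a relation, $\coverlt^k \circ \covergt^l = \bigcup \partialid{K',L'}{B}$, where the union is over disjoint pairs $(K', L')$ with $|K'| \leq k$, $|L'| \leq l$, and $k - |K'| = l - |L'|$ (the common slack measures how many ``up'' steps were cancelled by ``down'' steps on the same element). Composing on both sides then gives $\lambda \bullet \mu = \bigcup \lambda \circ \partialid{K',L'}{B} \circ \mu$. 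For any summand with $|K'| + |L'| < k + l$, a non-empty composition would witness some $(X, Z) \in \lambda \bullet_{|K'|+|L'|} \mu$, contradicting that $\totaltype{\lambda}{\mu} = k + l$. Hence only pairs with $|K| = k$, $|L| = l$, $K \cap L = \emptyset$ contribute, and since $\lambda \bullet \mu$ is non-zero by Theorem~\ref{thm:bullet}(i), at least one such pair yields a non-empty composition, giving the existence statement.

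For the second assertion, fix a pair $(K, L)$ with $|K| = k$, $|L| = l$, $K \cap L = \emptyset$, and $\lambda \circ \partialid{K, L}{B} \circ \mu \neq 0_{AC}$. The inclusion $\lambda \circ \partialid{K, L}{B} \circ \mu \subseteq \lambda \bullet \mu$ is immediate from the decomposition above, so the task is the reverse inclusion. Given $(X, Z) \in \lambda \bullet \mu$, witnessed by some triple $X \rel\lambda Y' \rel{\partialid{K_*, L_*}{B}} Y \rel\mu Z$ with $(K_*, L_*)$ possibly different from $(K, L)$, together with a fixed witness $X_0 \rel\lambda Y_0' \rel{\partialid{K, L}{B}} Y_0 \rel\mu Z_0$ furnished by the hypothesis, I would iteratively modify $(Y', Y)$ --- applying the exchange axiom for $\lambda$ against $X_0 \rel\lambda Y_0'$ to move $Y' \setminus Y$ toward $L$, and the exchange axiom for $\mu$ against $Y_0 \rel\mu Z_0$ to move $Y \setminus Y'$ toward $K$ --- to produce a new witness of the form $X \rel\lambda \widetilde Y' \rel{\partialid{K, L}{B}} \widetilde Y \rel\mu Z$. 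The hard part will be orchestrating these exchanges so that $X$ and $Z$ are preserved at every step: the exchange axiom only guarantees a partner $v$ in a prescribed set, and several of the eight exchangeability cases would modify the first or last set of the triple, which must be forbidden. I expect this requires an alternating-chain argument in the spirit of the proof of Theorem~\ref{thm:exchcomposition}, where each illegal exchange triggers further exchanges until a legal one is reached; termination would be controlled by a monotone invariant such as $|(Y \symdif Y_0) \cup (Y' \symdif Y_0')| + |K_* \symdif K| + |L_* \symdif L|$, with the minimality built into the type $(k,l)$ ruling out the degenerate branches that would otherwise force modification of $X$ or $Z$.
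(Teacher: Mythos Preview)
Your existence argument is essentially correct, and in fact cleaner than you make it: the identity
\[
   \lambda \bullet_{k,l} \mu \;=\; \bigcup_{\substack{|K|=k,\ |L|=l\\ K \cap L = \emptyset}} \lambda \circ \partialid{K,L}{B} \circ \mu
\]
is immediate from the definitions of $\bullet_{k,l}$ and $\partialid{K,L}{B}$, so you do not need to route through Proposition~\ref{prop:bulletcirc}(iii) at all.  (Your side remark that $\coverlt^k \circ \covergt^l$ equals the union of partial identities is not literally true when $|B|$ is small, since the intermediate set $W$ may fail to fit inside $B$; but since the decomposition above is what you actually use, this does not matter.)

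The ``furthermore'' clause is where there is a genuine gap.  You correctly identify the difficulty: when you exchange an element of $Y' \setminus Y$ against $Y_0'$ using the exchange axiom for $\lambda$, the partner may land in $X$ rather than in $Y'$, and you cannot allow this.  You then say you ``expect'' an alternating-chain argument will handle it, with termination controlled by a symmetric-difference invariant.  But you have not actually shown that such a chain terminates in a legal exchange, nor that your proposed invariant is monotone --- and this is not routine.  The obstruction is real: the exchange axiom for $\lambda$ alone gives no leverage over $\mu$, and vice versa, so the argument must somehow use the minimality $\totaltype{\lambda}{\mu} = k+l$ to rule out the bad branches.  Doing this directly would amount to proving something close in spirit (and difficulty) to the associativity lemmas of Section~\ref{sec:assoc}, and you have not supplied that argument.

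The paper takes a completely different route that avoids this direct exchange computation.  Using rigidity, it rewrites $\lambda \bullet \mu$ as a multifold $\bullet$-composition $\Pi_\bullet(\mu_0, \mu_1, \dots, \mu_m)$ in which $\mu_0$ packages $\lambda$, $\mu$, and a coevaluation, and each subsequent $\mu_i = \ev_{\{b_i\}}$ is a single-element evaluation.  Corollary~\ref{cor:bulletmonoidal} (which rests on type additivity, Theorem~\ref{thm:bullet}(iii)) then converts this to a $\circ$-composition where each $\ev_{\{b_i\}}$ is replaced by one of only three possible non-zero morphisms in $\Mor_\Bmatroid(\{b_i\} \otimes \{b_i\}, \emptyset)$; reading off which option occurs at each $i$ determines $K$ and $L$.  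For the ``furthermore'' part, the paper reorders $B$ so that elements of $\overline K \cap \overline L$ come first --- the hypothesis $\lambda \circ \partialid{K,L}{B} \circ \mu \neq 0$ forces those steps to have type $(0,0)$ --- and then each remaining step has type $(1,0)$ or $(0,1)$, leaving a unique non-zero choice, which must therefore be the one specified by $K$ and $L$.  The hard combinatorics is thus outsourced to Theorem~\ref{thm:bullet}(iii), already proved elsewhere, rather than redone here.
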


\begin{example}
Let $M \in \Mat_{A \times B}(\FF)$ be a matrix of rank $r$, 
with rows and columns indexed $A$ and $B$.
For subsets $X \subseteq A$, $Y \subseteq B$, let 
$M[X,Y] \in \Mat_{X \times Y}(\FF)$ denote the submatrix of $M$
specified by row set $X$ and column set $Y$.
Let $\lambda_M \in \Mor_\Bmatroid(A,B)$ be as in 
Example~\ref{ex:matrices}.  The \defn{column matroid}
of $M$ is the matroid $(B, \alpha)$ such that
$Y \in \alpha$ if and only if $\rank M[A,Y] = |Y| =  r$,
(i.e. $Y$ is a maximal linearly independent subset of the columns).
The column matroid can be expressed in terms of 
$\lambda_M$ as $\uniform{|A|}{\emptyset,A} \bullet \lambda_M$, which
is a $\bullet$-composition of type $(0,|A|-r)$.

Theorem~\ref{thm:structure} asserts that there 
exists $L \subseteq A$, $|L| = |A|-r$, such that 
\[
\uniform{m}{\emptyset,A} \bullet \lambda_M 
= \uniform{m}{\emptyset,A} \circ \partialid{\emptyset,L}{A} \circ \lambda_M
= \uniform{r}{\emptyset,\overline L} \circ \lambda_{M[\overline L, B]}
\,,
\]
and moreover this holds for $L$ if and only if $|L| = |A| -r$
and the right hand side above is non-zero.  
Indeed, this is a well-known fact from linear algebra:
the matrices
$M$ and $M[\overline L,B]$ have the same linearly independent
columns
if and only if 
$\overline L$ is a spanning subset of the rows of $M$.
\end{example}

\begin{example}
\label{ex:Bpositroid}
For positroids (see Example~\ref{ex:positroid}), 
we define a monoidal category $(\Bpositroid, \otimes)$ as follows.
$\Ob_\Bpositroid := \Ob_\positroid$ is the class of totally ordered
finite sets, and 
$\Mor_\Bpositroid(A,B) := \Mor_\positroid(A,B) \setminus \{0_{AB}\}$.
Composition of morphisms is given by $\bullet$.  
The functor $\otimes : \Bpositroid \times \Bpositroid \to \Bpositroid$
is defined as in $\positroid$.  

We now show that $\Bpositroid$
is closed under composition of morphisms.
We cannot use the same technique as 
Examples~\ref{ex:Brepresentable}--\ref{ex:BMConv},
since $\positroid$ does not contain the covering relations and
is not even closed under $\circ$-composition 
with the covering relations.
Instead we use Theorem~\ref{thm:structure}.
First, note that the partial identity relations are 
morphisms in the category $\positroid$.
To see this, observe that if  $A = (a_1, \dots, a_n)$, then
\[
  \partialid{P,Q}{A} = \uniform{d_1}{\{a_1\},\{a_1\}}
  \otimes \dots \otimes \uniform{d_n}{\{a_n\}, \{a_n\}}
\,,
\]
where $d_i \in \{-1,0,1\}$.
Each factor $\uniform{d_i}{\{a_i\},\{a_i\}}$ is a positroid,
and therefore so is $\partialid{P,Q}{A}$.
Hence, Theorem~\ref{thm:structure} tells us that 
any $\bullet$-composition in $\Bpositroid$ can be reexpressed
as a $\circ$-composition in $\positroid$.  
Since $\positroid$ is closed
under composition of morphisms, so is $\Bpositroid$.

The category $\Bpositroid$ has many of the same properties as $\Bmatroid$,
and $\positroid$;
for example $(\Bpositroid, \otimes)$ is rigid.  
Furthermore, for any totally ordered finite
set $E$, and $e \in E$, 
$\delta^e, \chi^e \in \Mor_\Bpositroid(E, E-e)$,
and hence deletion and contraction for positroids are given by morphisms 
in the category $\Bpositroid$.
\end{example}

\begin{example}
For the category $\BMConv$ (see Example \ref{ex:BMConv}), 
Theorem~\ref{thm:structure} has the following analogue, in which
the partial identity operators are replaced by translation operators:

\begin{theorem}
Let $\Lambda \in \Mor_\BMConv(E, F)$, $\Theta \in \Mor_\BMConv(F, G)$,
and suppose $\Lambda \bullet \Theta$ has type $(k,l)$.
For $s \in \ZZ^F$, 
let $T_s \in \Rel(\ZZ^F, \ZZ^F)$ be the translation
operator: $q' \rel{T_s} q$ if and only if $q = q'+s$.
Then there exists $s \in \ZZ^F$ such that $k = \|s^+\|_1$, $l = \|s^-\|_1$
and $\Lambda \bullet \Theta = \Lambda \circ T_s \circ \Theta$.
Furthermore, this last equation holds for any $s$ such that 
$k = \|s^+\|_1$, $l = \|s^-\|_1$
and $\Lambda \circ T_s \circ \Theta \neq 0_{EG}$. \qed
\end{theorem}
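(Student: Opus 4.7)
The plan is to deduce this from Theorem~\ref{thm:structure} via the presentation of M-convex relations as coherent double sequences in $\BSmatroid$ (Examples~\ref{ex:MConv},~\ref{ex:BMConv}). Write $F = \{f_1, \dots, f_n\}$ and represent $\Lambda, \Theta$ by sequences $(\mu_{a,b})$, $(\nu_{a,b})$ in $\BSmatroid$. Fix truncation parameters $a$, $c$, $b$ large enough that $\mu_{a,c} \bullet \nu_{c,b}$ captures $\Lambda \bullet \Theta$ on the window $\|p\|_\infty \leq a$, $\|r\|_\infty \leq b$, and accommodates some intermediate $q, q' \in \ZZ^F$ realizing the minimal $\|q-q'\|_1$; the intermediate object is $[2c]^{\otimes n} = [2c]_{f_1} \sqcup \cdots \sqcup [2c]_{f_n}$, with $\mu_{a,c}, \nu_{c,b}$ symmetric on each block. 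Using symmetry on each block to swap incomparable pairs in a minimizing $(Y, Y')$, one may arrange $Y_f, Y'_f$ to be comparable for each $f$; under this reduction the $\BSmatroid$-type of $\mu_{a,c} \bullet \nu_{c,b}$ coincides with the $\BMConv$-type $(k,l)$ of $\Lambda \bullet \Theta$.

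Next, apply Theorem~\ref{thm:structure} to $\mu_{a,c} \bullet \nu_{c,b}$, obtaining disjoint $K, L \subseteq [2c]^{\otimes n}$ with
\[
\mu_{a,c} \bullet \nu_{c,b} = \mu_{a,c} \circ \partialid{K,L}{[2c]^{\otimes n}} \circ \nu_{c,b}\,.
\]
By symmetry on each block, the right-hand side depends on $(K,L)$ only through the block sizes $k_f := |K \cap [2c]_f|$ and $l_f := |L \cap [2c]_f|$. If some $\min(k_f, l_f) > 0$, pick $x \in K \cap [2c]_f$ and $y \in L \cap [2c]_f$: by symmetry, $(K \setminus \{x\}, L \setminus \{y\})$ gives an equally non-zero sandwich with strictly smaller $|K|+|L|$, contradicting uniqueness of the type. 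Hence $\min(k_f, l_f) = 0$ for every $f$, and setting $s(f) := k_f - l_f$ yields $k_f = s(f)^+$, $l_f = |s(f)^-|$, $|K| = \|s^+\|_1 = k$, and $|L| = \|s^-\|_1 = l$. Under the $\Skel$ correspondence, the partial identity $\partialid{K,L}{[2c]^{\otimes n}}$ sandwiched between symmetric morphisms descends to the translation $T_s$: it sends a subset $X'$ to $Y' = (X' \setminus L) \cup K$, whose block sizes shift by $s(f)$. This gives $\Lambda \bullet \Theta = \Lambda \circ T_s \circ \Theta$ on the chosen window, and hence globally.

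For the \emph{furthermore} statement, given $s \in \ZZ^F$ with $\|s^+\|_1 = k$, $\|s^-\|_1 = l$, and $\Lambda \circ T_s \circ \Theta \neq 0_{EG}$, pick disjoint $K, L \subseteq [2c]^{\otimes n}$ (for $c$ large) with block sizes $s(f)^+$ and $|s(f)^-|$: the non-vanishing of $\Lambda \circ T_s \circ \Theta$ lifts to non-vanishing of $\mu_{a,c} \circ \partialid{K,L}{[2c]^{\otimes n}} \circ \nu_{c,b}$, and the \emph{furthermore} clause of Theorem~\ref{thm:structure} delivers the sandwich identity, which descends. The main obstacle is managing the coherence of the sequence representation across truncations --- verifying that the extracted $s \in \ZZ^F$ is independent of $(a,c,b)$ and that a local-window identity extends globally. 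I expect both to follow from uniqueness of the type in $\BSmatroid$ and $\BMConv$, combined with the way larger truncations restrict to smaller ones.
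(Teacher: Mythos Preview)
The paper gives no proof here---the statement ends with a bare \qed, signalling that the argument of Theorem~\ref{thm:structure} is meant to be rerun inside $\BMConv$, which has all the needed rigid-monoidal and type structure (Example~\ref{ex:BMConv}). In that direct approach one decomposes $F = \{f_1,\dots,f_n\}$ into singletons and repeats the proof of Theorem~\ref{thm:structure}; the insertion $\covercomp{k_i,l_i} \circ \ev_{\{f_i\}}$ in $\BMConv$ depends only on $k_i - l_i$ (it is the relation $p_1+p_2 = l_i-k_i$ on $\ZZ^{\{f_i\}\otimes\{f_i\}}$), and type-minimality forces $\min(k_i,l_i)=0$, so the output is exactly $T_s$ with $s(f_i) = k_i - l_i$.

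Your approach---lifting to $\BSmatroid$ via the sequence representation and applying Theorem~\ref{thm:structure} there---is a genuinely different route, and it is correct. Your reduction to $\min(k_f,l_f)=0$ is right: replacing $Y'$ by $Y'-y+x$ (with $x\in K_f$, $y\in L_f$) uses only the block-symmetry of $\mu_{a,c}$ and exhibits a non-zero $(K{-}x, L{-}y)$-sandwich of strictly smaller total size, contradicting minimality of the total type. The coherence issue you flag is real but resolves as you guessed, by proving the ``furthermore'' clause first, pointwise in $(p,r)$: given $s$ with $\|s^+\|_1=k$, $\|s^-\|_1=l$ and $\Lambda\circ T_s\circ\Theta \neq 0_{EG}$, and given $(p,r)\in \Lambda\bullet\Theta$, pick one window large enough to contain $(p,r)$, a type-realizing quadruple for it, and a witness of $\Lambda\circ T_s\circ\Theta\neq 0_{EG}$. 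In that window the type is still $(k,l)$, the $(K,L)$ built from $s$ has the correct sizes and non-zero sandwich, so Theorem~\ref{thm:structure}'s furthermore clause places $(X,Z)$ in the sandwich, which descends to $(p,r)\in\Lambda\circ T_s\circ\Theta$. Existence of $s$ then follows from any single window. Your route has the virtue of literally citing Theorem~\ref{thm:structure} rather than reproving it; the paper's implicit route avoids truncations altogether.
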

\end{example}

In order to prove Theorem~\ref{thm:structure},
we begin by showing that $\bullet$ interacts sensibly with 
the monoidal structure of $\Bmatroid$.

\begin{lemma}
\label{lem:bulletmonoidal}
Let $A,B,C,D$ be finite sets, 
$\lambda \in \Mor_\Bmatroid(A,B \otimes D)$, $\mu \in \Mor_\Bmatroid(B,C)$.
Let $\covercomp{k,l} \in \Mor_\matroid(B,B)$ 
be as in Proposition~\ref{prop:bulletcirc}.
Suppose $\lambda \bullet (\mu \otimes 1_D)$ has type $(k,l)$.  Then
\[
    \lambda \bullet (\mu \otimes 1_D) = 
  \lambda \circ ((\covercomp{k,l} \circ \mu ) \otimes 1_D)
\,.
\]
In particular, note that $\covercomp{k,l} \circ \mu \neq 0_{BC}$.
\end{lemma}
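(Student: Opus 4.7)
The plan is to prove both inclusions of the displayed equation separately; the ``in particular'' clause then follows since the left-hand side is non-zero. For the inclusion $\lambda \circ ((\covercomp{k,l} \circ \mu) \otimes 1_D) \leq \lambda \bullet (\mu \otimes 1_D)$, the relation $\covercomp{k,l} \otimes 1_D$ on $B \otimes D$ (acting as $\covercomp{k,l}$ on the $B$-factor and identity on the $D$-factor) is a sub-relation of any ordering of $\covercomp{k,l}$ on $B \otimes D$. By bifunctoriality of $\otimes$ and Proposition~\ref{prop:bulletcirc}(iii),
\[
\lambda \circ ((\covercomp{k,l} \circ \mu) \otimes 1_D) = \lambda \circ (\covercomp{k,l} \otimes 1_D) \circ (\mu \otimes 1_D) \leq \lambda \circ \covercomp{k,l} \circ (\mu \otimes 1_D) = \lambda \bullet (\mu \otimes 1_D),
\]
where the rightmost $\covercomp{k,l}$ is taken on $B \otimes D$.

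For the reverse inclusion, the plan is a minimality argument. Fix $(X, Z) \in \lambda \bullet (\mu \otimes 1_D)$; among all witnesses $(Y', Y)$ --- meaning $X \rel\lambda Y'$, $Y \rel{\mu \otimes 1_D} Z$, $|Y \setminus Y'| = k$, $|Y' \setminus Y| = l$, so $Y_D = Z_D$ --- choose one minimising the ``$D$-defect'' $|Y_D' \symdif Z_D|$. I claim the minimum is zero, in which case the witness exhibits $(X,Z)$ in $\lambda \circ ((\covercomp{k,l} \circ \mu) \otimes 1_D)$ directly. Suppose otherwise; pick $d \in Y_D' \symdif Z_D$ and assume $d \in Y_D' \setminus Z_D$ (the other case is dual, via the symmetric form of the exchange axiom). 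The strategy is to use the exchange axiom for $\alpha_\lambda$ on the basis $\overline X \sqcup Y'$ to find $b \in B \setminus (Y_B \cup Y_B')$ with $X \rel\lambda Y' - d + b$. Then $\tilde Y' := Y' - d + b$ gives a new witness of the same type $(k,l)$ (a direct count, using $b, d \notin Y$) with $|\tilde Y_D' \symdif Z_D| = |Y_D' \symdif Z_D| - 1$, contradicting the minimal choice.

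The main obstacle is producing the required $b$. Valid exchange partners $c$ for $d$ (meaning $\overline X \sqcup (Y' - d + c)$ is a basis of $\alpha_\lambda$) lie in $X \sqcup (B \setminus Y_B') \sqcup (D \setminus Y_D')$. A direct count shows that $c \in Y_B \setminus Y_B'$ or $c \in Z_D \setminus Y_D'$ each produces a witness of total symmetric difference $k+l-2$, contradicting the definite type of $\lambda \bullet (\mu \otimes 1_D)$; choices $c \in X$ alter the domain element and are useless; and choices $c \in D \setminus (Y_D' \cup Z_D)$ are lateral exchanges, preserving $|Y_D' \symdif Z_D|$ but replacing $d$ by a similar element. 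Iterating the lateral exchanges, and combining with exchanges in $\alpha_\mu$ on the pair $Y_B \rel\mu Z_C$ to re-allocate $Y_B$, eventually forces some element of $Y_D' \setminus Z_D$ to admit an exchange partner in $B \setminus (Y_B \cup Y_B')$: otherwise the exchange closure of $Y_D' \setminus Z_D$ in $\alpha_\lambda$ would be confined to $A \cup D$, allowing the construction of a composition witness for $(X,Z)$ of total symmetric difference strictly less than $k+l$, which again contradicts the definite type.
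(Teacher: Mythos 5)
Your easy inclusion $\lambda \circ ((\covercomp{k,l} \circ \mu)\otimes 1_D) \leq \lambda \bullet (\mu \otimes 1_D)$ is correct, and so is the bookkeeping showing that a single exchange $Y' \mapsto Y'-d+b$ with $b \in B\setminus(Y_B\cup Y'_B)$ would preserve the type and lower the $D$-defect. The genuine gap is in the reverse inclusion, at exactly the point you flag as ``the main obstacle'': you never prove that such a $b$ exists, even after lateral moves. The exchange axiom only supplies a partner for $d$ relative to a specified second basis of $\alpha_\lambda$, and nothing in your case analysis rules out that every valid partner lies in $X$ (which changes the domain set) or in $D\setminus(Y'\cup Z)$ (a lateral move). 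The closing sentence --- ``iterating the lateral exchanges, and combining with exchanges in $\alpha_\mu$ \dots\ otherwise the exchange closure of $Y'_D\setminus Z_D$ would be confined to $A\cup D$, allowing the construction of a composition witness of total symmetric difference strictly less than $k+l$'' --- is an assertion, not an argument: ``exchange closure'' is undefined, no smaller witness is actually constructed, and no termination argument for the iteration is given. Since this is where all the content of the lemma sits, the proof is incomplete as written. (A small side point: the contradictions you do obtain are with the minimality of the total type $k+l$, not with definiteness of the type.)

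The difficulty is self-inflicted: you insist on keeping $Z$ fixed while repairing the witness, whereas the total type $\totaltype{\lambda}{\mu\otimes 1_D}$ is a minimum over \emph{all} pairs, so you may change $Z$ when deriving a contradiction. That freedom dissolves the problem, and is how the paper argues: if $d \in (Y'\setminus Y)\cap D$, then since $\mu\otimes 1_D$ is the identity on the $D$-coordinates and $Y\cap D = Z\cap D$, we have $Y+d \rel{\mu\otimes 1_D} Z+d$, so the pair $(Y',\,Y+d)$ shows that $\lambda \bullet_{k+l-1}(\mu\otimes 1_D)$ is non-zero, contradicting $\totaltype{\lambda}{\mu\otimes 1_D} = k+l$; symmetrically, for $d \in (Y\setminus Y')\cap D$ use $Y-d \rel{\mu\otimes 1_D} Z-d$. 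Hence \emph{every} witness already satisfies $Y\cap D = Y'\cap D = Z\cap D$, the symmetric difference lives entirely in $B$, and the displayed identity follows at once --- no minimization over witnesses and no matroid-exchange analysis are needed. If you want to salvage your fixed-$Z$ scheme you would have to prove the existence of the partner $b$ (or the success of your iteration) outright, which is essentially a restatement of the lemma itself.
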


\begin{proof}
Let $\nu = \lambda \bullet (\mu \otimes 1_D)$, 
and $\nu' = \lambda \circ ((\covercomp{k,l} \circ \mu) \otimes 1_D)$.
Then $X \rel\nu Z$ if and only if there exists $Y,Y' \subseteq B \sqcup D$ 
such that $X \rel\lambda Y'$, $Y \rel{\mu \otimes 1_D} Z$, 
$|Y' \setminus Y| = k$,
$|Y \setminus Y'| = l$.
On the other hand $X \rel{\nu'} Z$ if and only if there exists 
$Y_B,Y'_B \subseteq B$ such
that $X \rel\lambda (Y'_B,Z \cap D)$, $Y'_B \rel\mu Z \cap C$, where
$|Y'_B \setminus Y_B| = k$,
$|Y_B \setminus Y'_B| = l$.

Clearly $X \rel{\nu'} Z$ implies $X \rel\nu Z$, as we may take
$Y = (Y_B, Z \cap D)$, $Y' = (Y_B, Z \cap D)$.
Now suppose $X\rel\nu Z$, and $Y,Y'$ are as above.
If there exists $y \in (Y' \setminus Y) \cap D$ then we have
$Y+z \rel{\mu \otimes 1_D} Z+z$, $|(Y+z) \symdif Y'| < k+l$,
contradicting the definition of type $(k,l)$.  
Similarly if there exists $y \in (Y \setminus Y') \cap D$ then the
same reasoning applies for $Y-z \rel{\mu \otimes 1_D} Z-z$.
Therefore $Y \cap D = Y' \cap D$, and taking
$Y_B = Y \cap B$, $Y'_B = Y' \cap B$ shows that $X \rel{\nu'} Z$.
\end{proof}

Now consider a more general set-up.  Let $E_0, E_1, \dots, E_{m+1}$ be
finite sets, not necessarily disjoint, and
suppose we have morphisms
\begin{equation}
\label{eq:compositionsetup}
  \mu_i  \in 
  \Mor_\Bmatroid(E_i {\setminus}\, E_{i+1}\,,\, E_{i+1} {\setminus}\, E_i)
  \qquad\text{for $i =0, \dots, m$.}
\end{equation}
Let $\hat\mu_i := 
\mu_i \otimes 1_{E_i \cap E_{i+1}} \in \Mor_\Bmatroid(E_i, E_{i+1})$.
Consider the $m$-fold compositions
\begin{align*}
\Pi_\circ(\mu_0, \dots, \mu_m) 
  &:= 
    \hat \mu_0
   \circ
    \hat \mu_1
   \circ \dots 
   \circ \hat \mu_{m+1}
\\
   \Pi_\bullet(\mu_0, \dots, \mu_m) 
&:=
    \hat \mu_0
   \bullet
    \hat \mu_1
   \bullet \dots 
   \bullet \hat \mu_{m+1}
\,.
\end{align*}
Let $\pi_i := \Pi_\bullet(\mu_0, \dots, \mu_i) \in 
\Mor_\Bmatroid(E_0, E_{i+1})$.  
The \defn{type} of $\Pi_\bullet(\mu_0, \dots, \mu_m)$ is defined to be
$\sum_{i=1}^m \type{\pi_{i-1}}{\hat \mu_i}$.
If $\Pi_\bullet(\mu_0, \dots, \mu_m)$ has type $(0,0)$, then
$\Pi_\circ(\mu_0, \dots, \mu_m) = \pi_{m+1}$; otherwise,
$\Pi_\circ(\mu_0, \dots, \mu_m) = 0_{E_0,E_{m+1}}$.

\begin{remark}
We can represent the $m$-fold composition 
$\Pi_\bullet(\mu_0, \dots, \mu_m)$ by a schematic diagram.  
For each $\mu_i$, draw a thin rectangular box 
centred at $x$-coordinate $i+\frac{1}{2}$. Draw
an edge exiting the left size of the box for each point 
in the domain of $\mu_i$, and an edge exiting to the right 
for each point in the codomain of $\mu_i$. 
These edges extend to the left
or the right until $x=0$ or $x=m+1$, or they are in the domain/codomain
some other $\mu_j$.  Finally, for every point in $\bigcap_{i=0}^{m+1} E_i$, draw
edge extending from $x=0$ to $x=m+1$ that does not intersect any 
of the boxes.  $\bigcup_{i=0}^{m+1} E_i$ is then the set of all
edges in the diagram, and $E_i$ specifically corresponds 
to the set of edges that intersect the vertical line $x=i$.  
An example of such a diagram is shown in Figure \ref{fig:schematic}.
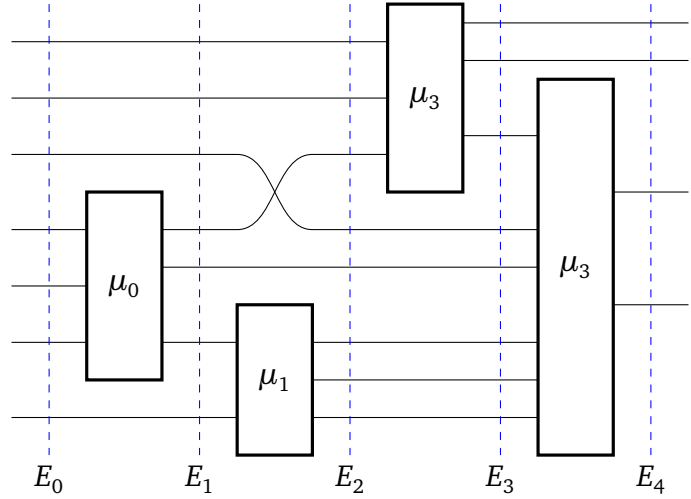
\begin{figure}
\begin{center}
\begin{tikzpicture}
\draw[very thick] (1,1) rectangle (2,3.5) node[midway] {$\mu_0$};
\draw[very thick] (3,0) rectangle (4,2) node[midway] {$\mu_1$};
\draw[very thick] (5,3.5) rectangle (6,6) node[midway] {$\mu_3$};
\draw[very thick] (7,0) rectangle (8,5) node[midway] {$\mu_3$};
\draw (0,.5) -- (3,.5);
\draw (0,1.5) -- (1,1.5);
\draw (0,2.25) -- (1,2.25);
\draw (0,3) -- (1,3);
\draw (0,4) -- (3,4) .. controls (3.5,4) and (3.5,3) .. (4,3) -- (7,3);
\draw (0,4.75) -- (5,4.75);
\draw (0,5.5) -- (5,5.5);
\draw (2,1.5) -- (3,1.5);
\draw (2,2.5) -- (7,2.5);
\draw (2,3) -- (3,3) .. controls (3.5,3) and (3.5,4) .. (4,4) -- (5,4);
\draw (4,.5) -- (7,.5);
\draw (4,1) -- (7,1);
\draw (4,1.5) -- (7,1.5);
\draw (6,4.25) -- (7,4.25);
\draw (6,5.75) -- (9,5.75);
\draw (6,5.25) -- (9,5.25);
\draw (8,3.5) -- (9,3.5);
\draw (8,2) -- (9,2);
\draw[blue,dashed] (0.5,6) -- (0.5,0) node[below,black]{$E_0$};
\draw[blue,dashed] (2.5,6) -- (2.5,0) node[below,black]{$E_1$};
\draw[blue,dashed] (4.5,6) -- (4.5,0) node[below,black]{$E_2$};
\draw[blue,dashed] (6.5,6) -- (6.5,0) node[below,black]{$E_3$};
\draw[blue,dashed] (8.5,6) -- (8.5,0) node[below,black]{$E_4$};
\end{tikzpicture}
\caption{Schematic diagram of a composition
$\Pi_\bullet(\mu_0, \mu_1, \mu_2, \mu_3)$.}
\label{fig:schematic}
\end{center}
\end{figure}

If $\mu_i$ is a morphism with special structure, 
we may denote this by an appropriate symbol inside its box.
For example, if 
$\mu_i = \ev_A \in \Mor_\Bmatroid(A \otimes A, \emptyset)$ is an
evaluation morphism, we draw arcs connecting the corresponding elements
of $A \otimes A$, and similarly if $\mu_i$ is a coevaluation morphism 
(see Figure \ref{fig:eval}).  Or if $\mu_i$ is itself a composition,
we may draw the structure of this composition inside the rectangle for
$\mu_i$.  A tensor product is indicated by two vertically stacked 
rectangles.
\begin{figure}
\begin{center}
\begin{tikzpicture}
\draw[very thick] (0,0) rectangle (1.3,3);
\begin{scope}
    \clip (0,0) rectangle (1.5,3);
    \draw (0,1.5) circle(1.1);
    \draw (0,1.5) circle(.7);
    \draw (0,1.5) circle(.3);
\end{scope}
\draw (0,.4) -- (-.5,.4) node[left] {$a_1$};
\draw (0,.8) -- (-.5,.8) node[left] {$a_2$};
\draw (0,1.2) -- (-.5,1.2) node[left] {$a_3$};
\draw (0,1.8) -- (-.5,1.8) node[left] {$a_3$};
\draw (0,2.2) -- (-.5,2.2) node[left] {$a_2$};
\draw (0,2.6) -- (-.5,2.6) node[left] {$a_1$};
\end{tikzpicture}
\qquad \qquad \qquad
\begin{tikzpicture}[xscale=-1]
\draw[very thick] (0,0) rectangle (1.3,3);
\begin{scope}
    \clip (0,0) rectangle (1.5,3);
    \draw (0,1.5) circle(1.1);
    \draw (0,1.5) circle(.7);
    \draw (0,1.5) circle(.3);
\end{scope}
\draw (0,.4) -- (-.5,.4) node[right] {$a_1$};
\draw (0,.8) -- (-.5,.8) node[right] {$a_2$};
\draw (0,1.2) -- (-.5,1.2) node[right] {$a_3$};
\draw (0,1.8) -- (-.5,1.8) node[right] {$a_3$};
\draw (0,2.2) -- (-.5,2.2) node[right] {$a_2$};
\draw (0,2.6) -- (-.5,2.6) node[right] {$a_1$};
\end{tikzpicture}
\caption{Schematic representation of an evaluation morphism $\ev_A$ (left),
and coevaluation morphism $\ev_A^\dagger$ (right), for $A = \{a_1, a_2, a_3\}$.}
\label{fig:eval}
\end{center}
\end{figure}
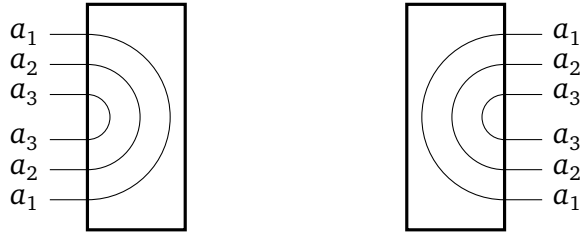
\end{remark}

In the following, we write
$\covercomp{k,l} \in \Mor_\matroid(E,E)$ 
to mean a $\circ$-composition of $k$ factors of 
$\coverlt \in \Mor_\matroid(E,E)$ 
and $l$ factors of $\covergt \in \Mor_\matroid(E,E)$, 
where the factors may be taken in any order.
The set $E$ is suppressed from the notation, and is to be understood 
from context.

\begin{corollary}
\label{cor:bulletmonoidal}
Let $E_0, \dots, E_{m+1}$ and $\mu_0, \dots, \mu_m$ be as in
\eqref{eq:compositionsetup}.
Suppose $\Pi_\bullet(\mu_0, \dots, \mu_m)$ has type $(k,l)$.  There exist 
non-negative integers
$k_i, l_i$, $i=1, \dots, m$, such that
$k = k_1 + \dots + k_m$, $l = l_1 + \dots +l_m$, and
\[
    \Pi_\circ(\mu_0,\, \covercomp{k_1,l_1} \circ \mu_1 , \dots, 
      \covercomp{k_m,l_m}\circ \mu_m ) = 
    \Pi_\bullet(\mu_0, \dots, \mu_m)
\,.
\]
Specifically, the equation above holds if 
$(k_i, l_i)$ is the type of $\pi_{i-1} \bullet \hat \mu_i$.
Note that each $\covercomp{k_i,l_i} \circ \mu_i$ is non-zero.
\end{corollary}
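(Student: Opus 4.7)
The plan is to proceed by induction on $m$, with Lemma~\ref{lem:bulletmonoidal} supplying the inductive step. I take $(k_i, l_i) := \type{\pi_{i-1}}{\hat\mu_i}$ for $i = 1, \dots, m$, which is the choice dictated by the second assertion of the corollary. With this choice, the equalities $k = \sum_i k_i$ and $l = \sum_i l_i$ are built into the definition of the total type of $\Pi_\bullet(\mu_0, \dots, \mu_m)$ given just above the corollary, so the only substantive content is the equality of compositions; non-vanishing of each $\covercomp{k_i, l_i} \circ \mu_i$ will come for free from the final sentence of Lemma~\ref{lem:bulletmonoidal}.

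For the inductive step, assume
\[
\pi_{m-1} = \Pi_\circ\bigl(\mu_0,\, \covercomp{k_1,l_1} \circ \mu_1,\, \dots,\, \covercomp{k_{m-1},l_{m-1}} \circ \mu_{m-1}\bigr).
\]
Under the identification $E_m = (E_m \setminus E_{m+1}) \sqcup (E_m \cap E_{m+1})$, the morphism $\hat\mu_m = \mu_m \otimes 1_{E_m \cap E_{m+1}}$ has precisely the shape $\mu \otimes 1_D$ demanded by Lemma~\ref{lem:bulletmonoidal}, with $\lambda = \pi_{m-1}$, $\mu = \mu_m$, and $D = E_m \cap E_{m+1}$. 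Since $\pi_m = \pi_{m-1} \bullet \hat\mu_m$ has type $(k_m, l_m)$ by definition of $(k_i, l_i)$, the lemma yields
\[
\pi_m = \pi_{m-1} \circ \bigl((\covercomp{k_m,l_m} \circ \mu_m) \otimes 1_{E_m \cap E_{m+1}}\bigr),
\]
and substituting the inductive hypothesis for $\pi_{m-1}$ completes the step. The base case $m = 0$ is immediate from $\Pi_\bullet(\mu_0) = \hat\mu_0 = \Pi_\circ(\mu_0)$.

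There is no real obstacle here: the corollary is essentially a repeated application of Lemma~\ref{lem:bulletmonoidal} together with an unwinding of the recursive definition of total type. The only bookkeeping is to check that at each step $\covercomp{k_i, l_i} \circ \mu_i$ lies in $\Mor_\matroid(E_i \setminus E_{i+1}, E_{i+1} \setminus E_i)$ so that it can play the role of the new $i$-th morphism in $\Pi_\circ$; this follows directly from the lemma's assertion that $\covercomp{k_i, l_i} \in \Mor_\matroid(E_i \setminus E_{i+1}, E_i \setminus E_{i+1})$.
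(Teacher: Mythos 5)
Your proof is correct and is essentially the paper's own argument: the paper simply says to apply Lemma~\ref{lem:bulletmonoidal} inductively to the pairs $(\pi_{i-1},\mu_i)$ for $i=1,\dots,m$, which is exactly your inductive step with $\lambda=\pi_{i-1}$, $\mu=\mu_i$, $D=E_i\cap E_{i+1}$. The bookkeeping you note (the additivity of the $(k_i,l_i)$ being the definition of the type of $\Pi_\bullet$, and non-vanishing coming from the last sentence of the lemma) matches the intended reading.
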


\begin{proof}
Apply Lemma~\ref{lem:bulletmonoidal} inductively 
to $(\pi_{i-1},\mu_i)$, for $i=1, \dots, m$.
\end{proof}

We now prove Theorem~\ref{thm:structure}, by using the rigid 
structure of $\Bmatroid$ to express $\lambda \bullet \mu$ as
a composition of several smaller morphisms.

\begin{proof}[Proof of Theorem~\ref{thm:structure}]
Using the rigid structure, we have
\[
   \lambda \bullet \mu = 
   (1_A \otimes \ev_B^\dagger) 
   \bullet 
(\lambda \otimes 1_B \otimes \mu)
    \bullet 
(\ev_B \otimes 1_C)
\,,
\]
We now turn this into an multifold composition of the form in
Corollary \ref{cor:bulletmonoidal}.
Choose any total ordering $(b_1, \dots, b_m)$ of the elements of $B$.
Let $B_1 = B$, and $B_{i+1} = B_i-b_i$ for $i=1, \dots, m$.
Let $E_0 = A$, and for $i=1, \dots, m+1$, let 
$E_i = B_i \otimes B_i \otimes C$.  Note that 
$E_1 \supseteq E_2 \supseteq \dotsm \supseteq E_{m+1}$, with
$E_{i+1} \setminus E_i = \{b_i\} \otimes \{b_i\}$.
Let 
\[
  \mu_0 
   = 
 (1_A \otimes \ev_B^\dagger) 
 \bullet
 (\lambda \otimes 1_B \otimes \mu)
 =
 (1_A \otimes \ev_B^\dagger) 
\circ
 (\lambda \otimes 1_B \otimes \mu) 
\,,
\]
and for $i=1, \dots, m$, let $\mu_i = \ev_{\{b_i\}}$.
With these sets and morphisms,
\[
\lambda \bullet \mu
    = \Pi_\bullet(\mu_0, \dots, \mu_m) 
\,.
\]
By Theorem~\ref{thm:bullet}(iii), $\Pi_\bullet(\mu_0, \dots, \mu_m)$
has type $(k,l)$.
Figure~\ref{fig:schematicproof} shows the schematic diagram of
this composition.
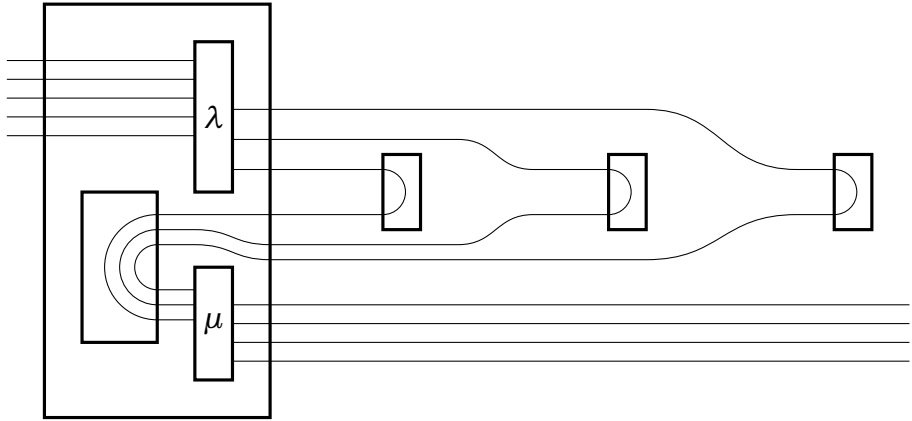
\begin{figure}
\begin{center}
\begin{tikzpicture}
\draw[very thick] (1.0,0) rectangle (4,5.5);
\draw[very thick] (1.5,1) rectangle (2.5,3);
\begin{scope}
    \clip (1.5,1) rectangle (2.5,3);
    \draw (2.5,2) circle(.3);
    \draw (2.5,2) circle(.5);
    \draw (2.5,2) circle(.7);
\end{scope}
\draw (2.5,1.7) -- (3,1.7);
\draw (2.5,1.5) -- (3,1.5);
\draw (2.5,1.3) -- (3,1.3);
\draw (2.5,2.3) -- (3,2.3) 
      .. controls (3.5,2.3) and (3.5,2.1) .. (4,2.1) -- (9,2.1)
      .. controls (10,2.1) and (10,2.7) .. (11,2.7) -- (11.5,2.7);
\draw (2.5,2.5) -- (3,2.5)
      .. controls (3.5,2.5) and (3.5,2.3) .. (4,2.3) -- (6.5,2.3)
      .. controls (7,2.3) and (7,2.7) .. (7.5,2.7) -- (8.5,2.7);
;
\draw[very thick] (3,0.5) rectangle (3.5,2) node[midway] {$\mu$};
\draw (3.5,1.5) -- (12.5,1.5);
\draw (3.5,1.25) -- (12.5,1.25);
\draw (3.5,1) -- (12.5,1);
\draw (3.5,.75) -- (12.5,.75);
\draw[very thick] (3,3) rectangle (3.5,5) node[midway] {$\lambda$};
\draw (0.5,3.75) -- (3,3.75);
\draw (0.5,4.0) -- (3,4.0);
\draw (0.5,4.25) -- (3,4.25);
\draw (0.5,4.5) -- (3,4.5);
\draw (0.5,4.75) -- (3,4.75);
\draw (3.5,3.3) -- (5.5,3.3);
\draw (3.5,3.7) -- (6.5,3.7)
      .. controls (7,3.7) and (7,3.3) .. (7.5,3.3) -- (8.5,3.3);
\draw (3.5,4.1) -- (9,4.1)
      .. controls (10,4.1) and (10,3.3) .. (11,3.3) -- (11.5,3.3);
\draw[very thick] (5.5,2.5) rectangle (6,3.5);
\begin{scope}
    \clip (5.5,2.5) rectangle (6,3.75);
    \draw (5.5,3) circle(.3);
\end{scope}
\draw (2.5,2.7) -- (5.5,2.7);
\draw[very thick] (8.5,2.5) rectangle (9,3.5);
\begin{scope}
    \clip (8.5,2.5) rectangle (9,3.75);
    \draw (8.5,3) circle(.3);
\end{scope}
\draw[very thick] (11.5,2.5) rectangle (12,3.5);
\begin{scope}
    \clip (11.5,2.5) rectangle (12,3.75);
    \draw (11.5,3) circle(.3);
\end{scope}
\end{tikzpicture}
\caption{Schematic diagram of the proof of Theorem~\ref{thm:structure}.}
\label{fig:schematicproof}
\end{center}
\end{figure}

Now apply Corollary~\ref{cor:bulletmonoidal}.  There exist
$k_i, l_i$ such that for 
$\nu_i = \covercomp{k_i,l_i} \circ \ev_{\{b_i\}}$, we have
\[
    \Pi_\bullet(\mu_0, \dots, \mu_m) 
   = 
    \Pi_\circ(\mu_0, \nu_1, \dots, \nu_m)
\,.
\]
Since each $\nu_i \in \Mor_{\Bmatroid}(\{b_i\} \otimes \{b_i\}, \emptyset)$
must be non-zero, $\nu_i$ is one of the following:
\begin{enumerate}[(a)]
\item $\nu_i = \mu_i = \ev_{\{b_i\}} = \uniform{-1}{\{b_i\} \otimes \{b_i\}, \emptyset}$
\item $\nu_i = \coverlt \circ \ev_{\{b_i\}} = \uniform{0}{\{b_i\} \otimes \{b_i\}, \emptyset}$
\item $\nu_i = \covergt \circ \ev_{\{b_i\}} = \uniform{-2}{\{b_i\} \otimes \{b_i\}, \emptyset}$.
\end{enumerate}
Let $K$ be the set of all $b_i \in B$ for which (b) holds, and
let $L$ be the set of all $b_i \in B$ for which (c) holds.  
Note that $|K| = k$ and $|L| = l$,
since $(k,l)$ is the type of $\Pi_\bullet(\mu_0, \dots, \mu_m)$.
Unpacking definitions, 
we have
$\Pi_\circ(\mu_0, \nu_1, \dots, \nu_m) 
= \lambda \circ \partialid{K,L}{B} \circ \mu$.  Hence \eqref{eq:structure}
holds for this choice of $K,L$.  

Now suppose that $K,L \subseteq B$ are such that $K \cap L = \emptyset$,
$|K|=k$, $|L|=l$ and $\lambda \circ \partialid{K,L}{B} \circ \mu \neq 0_{AC}$.  
Let $r = m-k-l$. Repeat
the argument above, with the elements of $B$ ordered $(b_1, \dots, b_m)$,
where $\{b_1, \dots, b_r\} = \overline K \cap \overline L$.
Since $\lambda \circ \partialid{K,L}{B} \circ \mu \neq 0_{AC}$, 
$\Pi_\circ(\mu_0, \mu_1, \dots, \mu_r)$ is non-zero, so we may assume
that (a) holds for $b_1, \dots, b_r$.  But now, since 
$\Pi_\bullet(\mu_0, \dots, \mu_m)$ has type $(k,l)$, 
each composition $\pi_{i-1} \bullet \hat \mu_i$, $i > r$ 
must have type $(1,0)$ or $(0,1)$.  Thus there is at most one
sequence $\nu_{r+1}, \dots, \nu_m$
(where $\nu_i$ satisfies either (b) or (c), depending on the type of 
$\pi_{i-1} \bullet \hat \mu_i$) for which 
$\Pi_\circ(\mu_0, \nu_1, \dots, \nu_m) \neq 0_{AC}$.  Furthermore,
there is at least one such sequence, namely where 
(b) holds for $b_i \in K$, and (c) holds
for $b_i \in L$.  Therefore $K$ must be the the set of all $b_i \in B$
for which (b) holds, and $L$ must be the set of all $b_i \in B$ for which
(c) holds. Hence, $\Pi_\bullet(\mu_0, \mu_1, \dots, \mu_m)
= \lambda \circ \partialid{K,L}{B} \circ \mu$, for this ordering of the
elements of $B$.
\end{proof}

Using a similar argument, Theorem~\ref{thm:structure} generalizes 
to $m$-fold compositions.  
\begin{theorem}
\label{thm:multistructure}
Let $E_0, \dots, E_{m+1}$ and $\mu_0, \dots, \mu_m$ be as in
\eqref{eq:compositionsetup}.
There exist disjoint subsets $K_i,L_i \subseteq E_i \setminus E_{i+1}$,
such that $k = |K_1| + \dots + |K_m|$, $l = |L_1| + \dots + |L_m|$
and 
\begin{equation}
\label{eq:generalstructure}
    \Pi_\circ(\mu_0,\, \partialid{K_1,L_1}{E_1 \setminus E_2} \circ \mu_1, 
      \dots, 
      \partialid{K_m,L_m}{E_m \setminus E_{m+1}} \circ \mu_m)
   =
    \Pi_\bullet(\mu_0, \dots, \mu_m)
\,.
\end{equation}
%
%\tag*{\qed}
Furthermore if $K_i,L_i \subseteq E_i \setminus E_{i+1}$,
$K_i \cap L_i = \emptyset$,
$k = |K_1| + \dots + |K_m|$, $l = |L_1| + \dots + |L_m|$, and
the left hand side of \eqref{eq:generalstructure} is non-zero, 
then \eqref{eq:generalstructure} holds.
\qed
\end{theorem}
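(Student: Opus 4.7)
The plan is to prove Theorem~\ref{thm:multistructure} by induction on $m$, using Theorem~\ref{thm:structure} both as the base case and as the main engine at each inductive step. The base case $m=1$ is essentially Theorem~\ref{thm:structure}, except that we need $K_1, L_1 \subseteq E_1 \setminus E_2$ rather than just $K_1, L_1 \subseteq E_1$. To secure this refinement I would apply Lemma~\ref{lem:bulletmonoidal}: since $\hat\mu_1 = \mu_1 \otimes 1_{E_1 \cap E_2}$, the lemma rewrites $\hat\mu_0 \bullet \hat\mu_1$ as $\hat\mu_0 \circ ((\covercomp{k,l} \circ \mu_1) \otimes 1_{E_1 \cap E_2})$. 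Re-running the proof of Theorem~\ref{thm:structure} with the elements of $E_1$ ordered so that those in $E_1 \cap E_2$ come first then forces the per-element exchange type at each point of $E_1 \cap E_2$ to be $(0,0)$ (those evaluations merely pass through the identity factor of $\hat\mu_1$), so $K_1, L_1$ can be extracted from $E_1 \setminus E_2$.

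For the inductive step I would set $(k_m', l_m') := \type{\pi_{m-1}}{\hat\mu_m}$ and note that the definition of the type of an $m$-fold composition as $\sum_{i=1}^m \type{\pi_{i-1}}{\hat\mu_i}$ forces $\pi_{m-1}$ to have type $(k - k_m', l - l_m')$. Applying the refined base case to $\pi_{m-1} \bullet \hat\mu_m$ produces $K_m, L_m \subseteq E_m \setminus E_{m+1}$ with $|K_m| = k_m'$, $|L_m| = l_m'$ satisfying
\[
  \pi_{m-1} \bullet \hat\mu_m
  = \pi_{m-1} \circ (\partialid{K_m, L_m}{E_m \setminus E_{m+1}} \otimes 1_{E_m \cap E_{m+1}}) \circ \hat\mu_m\,.
\]
The inductive hypothesis applied to the $(m-1)$-fold composition $\pi_{m-1}$ supplies $K_i, L_i$ for $i < m$ expressing $\pi_{m-1}$ as the corresponding $\Pi_\circ$. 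Splicing these yields \eqref{eq:generalstructure}.

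The ``furthermore'' clause goes by a parallel induction, invoking the uniqueness portion of Theorem~\ref{thm:structure} at each step. Suppose disjoint $K_i, L_i \subseteq E_i \setminus E_{i+1}$ are given with $\sum |K_i| = k$, $\sum |L_i| = l$ and the left-hand side of \eqref{eq:generalstructure} nonzero. Since $\partialid{K_i, L_i}{E_i \setminus E_{i+1}} \leq \covercomp{|K_i|, |L_i|}$, each partial $\circ$-composition is also nonzero, so Proposition~\ref{prop:bulletcirc}(ii) applied stepwise gives $k_i' + l_i' \leq |K_i| + |L_i|$; summing and comparing with $k + l = \sum (k_i' + l_i')$ forces $|K_i| + |L_i| = k_i' + l_i'$ at every step. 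A parallel degree computation (the degree of the full $\circ$-composition matches that of $\Pi_\bullet(\mu_0, \dots, \mu_m)$, and this tracks $|K_i| - |L_i|$ at each step inductively) then pins down $|K_i| = k_i'$ and $|L_i| = l_i'$. Once these size equalities are in hand, Theorem~\ref{thm:structure}'s furthermore applies stepwise to conclude $\tau_i = \pi_i$ for all $i$, and in particular for $i = m$.

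The step I expect to be the main obstacle is exactly this last piece of bookkeeping: passing from the global non-vanishing and aggregate size constraints to the local equalities $|K_i| = k_i'$ at every $i$, which is what lets Theorem~\ref{thm:structure}'s uniqueness kick in. Combining the type lower bound from Proposition~\ref{prop:bulletcirc}(ii) with the degree identity (which tracks $|K_i| - |L_i|$ rather than $|K_i| + |L_i|$) in order to pin down each individual pair is the delicate part; the rest of the argument is a fairly direct unwinding of the binary structure theorem along the composition.
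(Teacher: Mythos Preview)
Your existence argument by induction on $m$ is sound, and is a reasonable alternative to the paper's intended route (which re-runs the single-element evaluation decomposition of the proof of Theorem~\ref{thm:structure} for the whole $m$-fold composition at once).  One remark: for the base case, rather than ``re-running'' the proof of Theorem~\ref{thm:structure} with a special ordering, it is cleaner to use Lemma~\ref{lem:bulletmonoidal} together with the \emph{furthermore} clause of Theorem~\ref{thm:structure}.  From Lemma~\ref{lem:bulletmonoidal} you get a witness $X \rel{\hat\mu_0} Y$, $Y' \rel{\hat\mu_1} Z$ with $Y \cap (E_1\cap E_2) = Y' \cap (E_1\cap E_2)$; minimality of the type forces $|Y'\setminus Y|=k$, $|Y\setminus Y'|=l$, and then $K_1 := Y'\setminus Y$, $L_1 := Y\setminus Y'$ lie in $E_1\setminus E_2$ and make $\hat\mu_0 \circ \partialid{K_1,L_1}{E_1} \circ \hat\mu_1$ nonzero, so the furthermore clause of Theorem~\ref{thm:structure} applies.

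The ``furthermore'' part, however, has a genuine gap.  Your argument aims to show $|K_i|=k_i'$ and $|L_i|=l_i'$ at every stage, and hence $\tau_i=\pi_i$ for all $i$.  This is false in general.  Take $m=2$, $E_0=E_3=\emptyset$, $E_1=\{a\}$, $E_2=\{b\}$, with $\mu_0=\elementary{\emptyset}{\emptyset}$, $\mu_1$ the isomorphism $\{a\}\to\{b\}$, and $\mu_2=\elementary{\{b\}}{\emptyset}$.  Then $(k_1',l_1')=(0,0)$ and $(k_2',l_2')=(1,0)$, so the total type is $(1,0)$.  But the choice $K_1=\{a\}$, $L_1=\emptyset$, $K_2=L_2=\emptyset$ also gives a nonzero left-hand side (and indeed equals $\pi_2$), with $|K_1|=1\neq 0=k_1'$ and $\tau_1\neq\pi_1$.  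So the stepwise size equalities cannot be established, and your appeal to Proposition~\ref{prop:bulletcirc}(ii) only bounds $\totaltype{\tau_{i-1}}{\hat\mu_i}$, not $\totaltype{\pi_{i-1}}{\hat\mu_i}$.  Likewise the degree computation is a single global constraint $\sum(|K_i|-|L_i|)=k-l$, not a per-stage one.

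The paper's ``similar argument'' sidesteps this: it decomposes all the way down to single-element evaluations $\ev_{\{b\}}$ for $b$ ranging over $\bigsqcup_i (E_i\setminus E_{i+1})$, orders those $b$ so that the ones outside $\bigcup_i(K_i\cup L_i)$ come first, and then argues exactly as in the furthermore part of Theorem~\ref{thm:structure}.  The point is that at the single-element level each remaining step is forced to have type $(1,0)$ or $(0,1)$, leaving no room for the kind of redistribution between stages that breaks your inductive approach.
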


\begin{example}
Recall the categories $\BDGraph$ and $\Bgammoid$ from 
Examples~\ref{ex:bic} and~\ref{ex:Bgammoid}.  Consider the
subcategory $\BDGraph^\bullet$ of $\BDGraph$, where we restrict 
the class of morphisms to bicoloured digraphs with no isolated vertices.  
By the same
argument as in Examples~\ref{ex:rigidpath} and~\ref{ex:bic},
equation \eqref{eq:bic} characterizes a unique functor 
$\BPath^\bullet : \BDGraph^\bullet \to \Bgammoid$.  Evaluation
of this functor amounts to computing $\Pi_\bullet(\mu_0, \dots, \mu_m)$, 
where each $\mu_i$ is either $\BPath^\bullet(G_i)$ for some graph $G_i$
with one vertex, or a (co)evaluation morphism.

If $G \in \Mor_{\BDGraph^\bullet}(A,B)$ is perfectly orientable, 
then $\BPath^\bullet(G) = \BPath(G)$.
Otherwise, consider the following operation:
given an edge $e : u \to v$ of $G$, define \defn{breaking} $e$ to be 
the operation
of replacing $e$ by two new edges $e': u \to w'$ and  $e'' : w'' \to v$,
where $w'$ and $w''$ are two new degree-$1$ vertices of the same colour
(either both black or both white).
By repeatedly breaking edges of $G$, we can eventually make the graph 
perfectly orientable.  In this context 
Theorem~\ref{thm:multistructure} is asserting that 
$\BPath^\bullet(G) = \BPath(G')$ where $G'$ is obtained by breaking
a minimum number of edges of $G$ to make it perfectly orientable.  
Furthermore all such $G'$ give the same result, 
and all involve the same number of black/white broken edges.
\end{example}

\section{Enriched matroid categories}
\label{sec:R}

We now describe a structure that interpolates between the
categories $\matroid$ and $\Bmatroid$.
Let $R$ be a commutative ring, or a commutative semiring, or a 
commutative monoid, and let $x,y \in R$ be elements.  
In our discussion, we use the language of rings and semirings.
If $R$ is a commutative monoid, see Remark~\ref{rmk:monoid} for 
clarification on what some of the definitions mean in this context.

Recall that an $R$-linear category 
is a locally small category in which the 
sets $\Mor(A,B)$ are $R$-modules, and composition is $R$-bilinear.

\begin{definition}
We define an $R$-linear 
symmetric monoidal category $(\Rmatroid(x,y), \otimes)$.
The objects of $\Rmatroid(x,y)$ are again finite sets.
$\Mor_{\Rmatroid(x,y)}(A,B)$ is the
$R$-module of all formal $R$-linear combinations of morphisms in
$\Mor_{\Bmatroid}(A,B)$.   

The composition operation $\mystar$ of $\Rmatroid(x,y)$ depends on 
the elements $x,y \in R$.  For 
$\lambda \in \Mor_{\Bmatroid}(A,B)$, $\mu \in \Mor_{\Bmatroid}(B,C)$,
let 
\[
   \lambda \mystar \mu := x^k y^l \cdot
    (\lambda \bullet \mu)
\,,
\]
where $(k,l) = \type{\lambda}{\mu}$ is the type of $\lambda \bullet \mu$.
We extend this definition $R$-bilinearly 
to all of $\Mor_{\Rmatroid(x,y)}(A,B) \times 
\Mor_{\Rmatroid(x,y)}(B,C)$.  Theorem~\ref{thm:bullet}(ii) and (iii)
imply that $\mystar$ is associative.

For the monoidal structure, we have a functor 
$\otimes : \Rmatroid(x,y) \times \Rmatroid(x,y) \to \Rmatroid(x,y)$
defined by $A \otimes B = A \sqcup B$ for objects, and
$\lambda \otimes \mu = \lambda \otimes_R \mu$ for morphisms.
\end{definition}

\begin{remark}
$\Rmatroid$ has many of the same properties as $\matroid$ and
$\Bmatroid$.
In particular, $\Rmatroid$ is again a rigid symmetric monoidal category.  
However, $\lambda \mapsto \lambda^\dagger$ is no longer an anti-automorphism;
rather, $\dagger$ defines a contravariant functor 
$\Rmatroid(x,y) \to \Rmatroid(y,x)$.
The classification of isomorphisms in $\Rmatroid(x,y)$ is more complicated,
and depends on $R$.
\end{remark}

\begin{example}
For any commutative ring $R$, elements $x, y \in R$, and any finite set $A$, 
$\Mor_{\Rmatroid(x,y)}(A,A)$ is an associative unital $R$-algebra.
In addition, we have an $R$-linear trace map 
$\tr : \Mor_{\Rmatroid(x,y)}(A,A) \to R$, defined by
$\tr(\lambda) = \ev^\dagger_A \mystar (\lambda \otimes 1_A) \mystar \ev_A$,
which satisfies $\tr(\lambda \mystar \mu) = \tr(\mu \mystar \lambda)$.
\end{example}

\begin{remark}
\label{rmk:monoid}
Let $(R, \cdot)$ be a commutative monoid with identity element $1_R$.
In this case, an $R$-module is a set with an $R$-action.
If $M, M', M''$ are $R$-modules, then $\phi : M \to M'$ is $R$-linear
if $\phi(rm) = r\phi(m)$ for all $r \in R$, $m \in M$, and
$\psi : M \times M' \to M''$ is $R$-bilinear if 
$\psi(rm,m') = \psi(m,rm') = r\psi(m,m')$ for all $r \in R$, $m \in M$,
$m' \in M'$.

A zero element $0_R \in R$ is an element such that $0_R \neq 1_R$
and $0_R \cdot r = r \cdot 0_R = 0_R$ for all $r \in R$.
To define formal $R$-linear combinations, we distinguish two categories
of commutative monoids: commutative monoids, and commutative monoids
with a zero element.  In the former category, a morphism 
$\varphi: R \to R'$ must satisfy 
$\varphi(r_1 r_2) = \varphi(r_1) \varphi(r_2)$ and 
$\varphi(1_R) = 1_{R'}$; in the latter 
category, we also require $\varphi(0_R) = 0_{R'}$.
Let $S$ be a set.  If $R$ is in the category of commtative monoids,
the set of formal $R$-linear combinations of elements of $S$ is 
the $R$-module $R \times S$, where the action is given by
$r'(r,s) = (r'r,s)$.
If $R$ is in the category of commutative monoids with a zero element, let
$\sim$ be the equivalence relation on $R \times S$
defined by $(r, s) \sim (r', s')$ if and only if $r = r' = 0_R$;
in this case, the set of formal $R$-linear combinations of elements of $S$ 
is the $R$-module $R \times S/\sim$.
\end{remark}

\begin{example}
If $R = \{1_R\}$ is the monoid with one element, 
the category $\Rmatroid(1_R, 1_R)$ is isomorphic to 
$\Bmatroid$.  
If $R = \{0_R, 1_R\}$ is the monoid with an identity element and a 
zero element, then $\Rmatroid(0_R, 0_R)$ is isomorphic to $\matroid$.  
In the latter case, the isomorphism identifies the equivalence class 
$\{(0_R,\lambda) \mid \lambda \in \Mor_\Bmatroid(A,B)\}$ 
with the zero morphism $0_{AB} \in \Mor_\matroid(A,B)$.
These two monoids are, respectively, the initial object in the 
category of commutative monoids, and the initial object in the
category of commutative monoids with a zero element.
\end{example}

For each morphism $\lambda \in \Mor_{\Rmatroid(x,y)}(A,B)$, we define
\[
\lambda_{\mystar}(-,x,y) : 
  \Mor_{\Rmatroid(x,y)}(S,A)  \to \Mor_{\Rmatroid(x,y)}(S,B)
\]
to be the function 
\[
  \lambda_{\mystar}(\mu;x,y) = \mu \mystar \lambda
\,,
\]
which is a morphism of $R$-modules.
The covariant hom-functor 
$\Hom_{\Rmatroid(x,y)}(S,-) : \Rmatroid(x,y) \to \RMod$
is now a functor 
to the category of $R$-modules, with evaluation on morphisms given by 
$\Hom_{\Rmatroid(x,y)}(S, \lambda) = \lambda_{\mystar}(-\,;x,y)$.

\begin{example}
Let $(E, \alpha)$ be a matroid, and let $e \in E$.
Regard $\alpha$ as a morphism in 
$\Mor_{\Rmatroid(x,y)}(\emptyset, E)$. Then 
\begin{align*}
\delta^e_{\mystar}(\alpha; x,y) 
&:= 
\begin{cases}
\alpha \del e &\text{if $e$ is not a coloop of $\alpha$} \\
y\cdot (\alpha \del e) &\text{if $e$ is a coloop of $\alpha$,} \\
\end{cases}
\intertext{and} 
\chi^e_{\mystar}(\alpha; x,y) 
&:= 
\begin{cases}
\alpha \con e &\text{if $e$ is not a loop of $\alpha$} \\
x \cdot (\alpha \con e) &\text{if $e$ is a loop of $\alpha$.} \\
\end{cases}
\end{align*}
Thus in $\Rmatroid$, $\delta^e_{\mystar}$ and $\chi^e_{\mystar}$
perform deletion and contraction, while the coefficient
keeps track of how the rank of the matroid changes.
Setting $(x,y) = (0,0)$ or $(x,y) = (1,1)$, we recover the
behaviour of $(\delta^e_\circ, \chi^e_\circ)$ or
$(\delta^e_\bullet, \chi^e_\bullet)$, respectively.
\end{example}

\begin{example}
For a matroid $\alpha \in \M(E)$, the \defn{Tutte polynomial} 
$T_\alpha(x,y) \in \ZZ[x,y]$ is a matroid invariant which can
be defined recursively:
\[
T_\alpha(x,y) = T_{\alpha \del e}(x,y) + T_{\alpha \con e}(x,y)
\]
if $e \in E$ is neither a loop nor a coloop; 
if no such $e$ exists,
\[
  T_\alpha(x,y) = x^ky^l
\,,
\]
where $k$ is the number of loops in $\alpha$, and $l$ is the 
number of coloops.  (Note: The conventions employed here are non-standard.
In most references the polynomial defined above would be $T_\alpha(y,x)$.)

The Tutte polynomial is realized by a morphism in $\Zxymatroid(x-1,y-1)$.
Specifically, consider
$\tau \in \Mor_{\Zxymatroid(x-1,y-1)}(E, \emptyset)$,
\begin{equation}
\label{eq:tutte}
   \tau  =
   \sum_{X \subseteq E} \elementary{X}{\emptyset}
 =
    (\delta^{e_1} + \chi^{e_1}) \mystar
    (\delta^{e_2} + \chi^{e_2}) \mystar \dotsm \mystar
  (\delta^{e_n} + \chi^{e_n}) 
\\
\,,
\end{equation}
where $E = \{e_1, \dots, e_n\}$.  Then we have
\[
   \tau_{\mystar}(\alpha; x-1,y-1) = T_\alpha(x,y)
\,.
\]
\end{example}

\begin{remark}
Analogously, we define enriched categories 
$\Rpositroid(x,y)$, $\Rgammoid(x,y)$, %$\RSmatroid(x,y)$, 
$\RMConv(x,y)$,
and $\Rmatroid(\FF;x,y)$, for positroids,
gammoids, %symmetric matroids, 
M-convex sets, and matroids representable
over an infinite field $\FF$.

For symmetric matroids, it is reasonable to define 
$\RSmatroid(x,y)$ to be the (larger) category of symmetric $R$-linear 
combinations of matroids (rather than $R$-linear combinations 
of symmetric matroids).  That is, 
for $A = (A_1, \dots, A_n)$ and 
$B = (B_1, \dots, B_m)$, define
$\Mor_{\RSmatroid(x,y)}(A,B)$ 
to the subset of 
$\Mor_{\Rmatroid(x,y)}(A_1 \otimes \dots \otimes A_n, 
B_1 \otimes \dots \otimes B_m)$ of morphisms which are invariant under 
all permutations of each of the sets $A_i$ and $B_j$.
For example,  
in \eqref{eq:tutte} we have
$\tau \in \Mor_{\ZxySmatroid(x-1,y-1)}(E,\emptyset)$, since 
$\tau$ is invariant under all permutations of $E$.  In general, every
element of $\Mor_{\RSmatroid(x,y)}(E,\emptyset)$, defines a matroid
invariant on $\M(E)$.
\end{remark}

%%%%%%%%%%%%%%%%%%%%%%%%%%%%%%%%%%%%%%%%%%%%%%%%%%%%%
%%%%%%%%%%%%%%%%%%%%%%%%%%%%%%%%%%%%%%%%%%%%%%%%%%%%%

\section{Dominant morphisms}
\label{sec:dominant}

The remaining sections of this paper are devoted to the proof
of Theorem~\ref{thm:bullet}.
In this section, we develop tools to prove part (i).

\begin{definition}
A morphism $\lambda \in \Mor_\matroid(A,B)$ is \defn{dominant}
if for all $X \rel\lambda Y$ and all $y \in B$, there exists $x \in A$
such that $x$ and $y$ are exchangeable in $X \rel\lambda Y$.
\end{definition}

\begin{example}
Every isomorphism in the category $\matroid$ is dominant.
\end{example}

\begin{example}
Suppose $\lambda \in \Mor_\matroid(A,B)$ is a bimatroid.  Then
$\lambda$ is dominant if and only if $B \in \range(\lambda)$.
\end{example}

\begin{example}
If $|A| - |B| \geq m \geq 0$, then the uniform relation
$\uniform{-m}{AB} \in \Mor_\matroid(A,B)$ is a dominant morphism.
\end{example}

\begin{example}
\label{ex:makedominant}
More generally, let
 $\mu \in \Mor_\matroid(B,C)$ be any morphism.  
Let $m = \min\{|X| \mid X \in \range(\mu)\}$ and
$n = \max\{|X| \mid X \in \range(\mu)\}$.
If $|D| = n-m$, then
$\nu = \mu \circ \uniform{-m}{CD} \in \Mor_\matroid(B,D)$ is dominant.
\end{example}

We will first prove that $\lambda \bullet \mu$ has a definite
type when $\mu$ is dominant.  We then use the construction
in Example~\ref{ex:makedominant} to obtain the result in general.
In the dominant case, the idea
is to think of a $\lambda$ as a piece of a larger morphism
$\tla$, called a general lift of $\lambda$.  We show that the
general lifts are well-behaved under composition
with dominant morphisms.
Throughout the rest of this section, 
we assume that $\lambda \in \Mor_\matroid(A,B)$ is non-zero.

Fix two additional finite sets $S,T$.  
%The reader should imagine that 
%$|S|$ and $|T|$ are large compared to $|A|$ and $|B|$.  
For integers $k, l \geq 0$, define
a ``projection'' function
\begin{gather*}
    \pi_{k,l} : \Mor_\matroid(A \otimes S, B \otimes T) \to 
    \Mor_\matroid(A, B)
\\
    \pi_{k,l}(\tla) = 
(1_A \otimes \uniform{k}{\emptyset,S}) 
   \circ
   \tla
    \circ
      (1_B \otimes \uniform{-l}{T, \emptyset}) 
\,.
\end{gather*}
Hence $X \rel{\pi_{k,l}(\tla)} Y$ if and only if there
exists $(X,U) \rel{\tla} (Y,V)$ with $|U| = k$,
$|V| = l$.

We partially order $\ZZ \times \ZZ$ as follows: $(k',l') \geq (k,l)$
if and only if $k' \geq k$ and $l' \geq l$.

\begin{definition}
Let $\lambda \in \Mor_\matroid(A,B)$.  
A morphism $\tla \in \Mor(A \otimes S, B \otimes T)$
is a \defn{lift} of $\lambda$ if there exist $k,l \geq 0$ such
that $\pi_{k,l}(\tla) = \lambda$ and 
$\pi_{k',l'}(\tla) = 0_{AB}$ for $(k',l') \ngeq (k,l)$.
We say that $(k,l)$ is the \defn{type} of the lift, and we
write $\lifttype{\tla}{\lambda} := (k,l)$.
\end{definition}

\begin{definition}
Let $\lambda \in \Mor_\matroid(A,B)$.  
A point $t \in B$ is \defn{general in $\lambda$}
if for all $X \rel\lambda Y$ with $t \notin Y$, and all
$z \in \overline X \sqcup Y$, $t$ and $z$  are exchangeable 
in $X \rel\lambda Y$.  
We also say that $s \in A$ is general in $\lambda$ if $s$
is general in $\lambda^\dagger$.  
%(Thus, ``general in $\lambda$''
%and ``general in $\lambda^\dagger$\,'' mean exactly the same thing.)
A morphism 
$\tla \in \Mor_\matroid(A \otimes S, B \otimes T)$
is \defn{$(S,T)$-general} if $\tla$ is non-zero, and
every point of $S \sqcup T$ is general in $\tla$.
\end{definition}

\begin{lemma}
\label{lem:generallift}
If $\tla \in \Mor_\matroid(A \otimes S, B \otimes T)$
is $(S,T)$-general there exists a unique $\lambda \in \Mor_\matroid(A,B)$
such that $\tla$ is a lift of $\lambda$.
\end{lemma}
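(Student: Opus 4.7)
My plan is to identify the lift-type $(k,l)$ with the coordinate-wise minimum of the set
\[
\calP := \{(|U|,|V|) : (X,U) \rel{\tla} (Y,V)\} \subseteq \ZZ_{\geq 0} \times \ZZ_{\geq 0},
\]
after which uniqueness of $\lambda$ is essentially formal and existence reduces to showing that some single $\tla$-related pair realizes both $k := \min|U|$ and $l := \min|V|$ simultaneously. The main substantive task is this simultaneous-attainment claim, which I plan to handle by a matroid greedy argument on $\alpha_{\tla}$.

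\textbf{Uniqueness.} Assume $\tla$ is a lift of $\lambda$ with type $(k,l)$. The condition $\pi_{k',l'}(\tla) = 0_{AB}$ for $(k',l') \ngeq (k,l)$ says exactly that every $\tla$-related pair satisfies $(|U|,|V|) \geq (k,l)$; meanwhile, $\lambda = \pi_{k,l}(\tla)$ being non-zero forces some pair to realize $(|U|,|V|) = (k,l)$. Together these pin down $(k,l)$ as the coordinate-wise minimum of $\calP$, which is unique as soon as it belongs to $\calP$. Consequently $(k,l)$, and hence $\lambda = \pi_{k,l}(\tla)$, is uniquely determined by $\tla$.

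\textbf{Existence.} Via the bijection $(X,U,Y,V) \leftrightarrow \beta := \overline X \sqcup \overline U \sqcup Y \sqcup V$ between $\tla$-related pairs and bases of $\alpha_{\tla}$ on $A \sqcup S \sqcup B \sqcup T$, the claim that $(k,l)$ lies in $\calP$ becomes: some basis $\beta$ of $\alpha_{\tla}$ attains both $|\beta \cap S| = r_{\alpha_{\tla}}(S) = |S|-k$ (the maximum over all bases) and $|\beta \cap T| = \rank(\alpha_{\tla}) - r_{\alpha_{\tla}}(A \sqcup S \sqcup B) = l$ (the minimum). I plan to produce such a basis by applying the matroid greedy algorithm to $\alpha_{\tla}$ with the weight function $w(s) = 1$ for $s \in S$, $w(t) = -1$ for $t \in T$, and $w \equiv 0$ on $A \sqcup B$. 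Processing elements in order of decreasing weight, greedy first fills out a maximum independent subset of $S$, then extends it through $A \sqcup B$ to a maximum independent subset of $A \sqcup S \sqcup B$, and finally adds elements of $T$ to complete a basis; by the standard correctness argument for matroid greedy, the resulting basis attains both extremes simultaneously. The main conceptual step is recognizing the two separate optimizations as components of a single max-weight-basis problem; once this is in hand, the matroid axioms for $\alpha_{\tla}$ do the rest. The $(S,T)$-generality hypothesis is not evidently needed for the existence-uniqueness statement itself (only that $\tla \neq 0$), and I would expect its essential role to appear in the structural claims about lifts and their behaviour under composition developed in the subsequent sections.
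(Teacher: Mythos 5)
Your proof is correct, but it takes a genuinely different route from the paper. The paper argues by contradiction: assuming two incomparable minimal pairs $(k,l)$ and $(k',l')$, it uses the $(S,T)$-generality hypothesis to exchange points of $U$ for $U'$ and $V'$ for $V$ so as to assume $U \subsetneq U'$, $V' \subsetneq V$, and then applies the exchange axiom to a point $v \in V \setminus V'$ to produce a related pair witnessing $\pi_{k,l-1}(\tla) \neq 0_{AB}$, contradicting minimality. You instead pass to the associated matroid $\alpha_{\tla}$ and reduce the simultaneous-attainment claim to the standard fact that, for the nested pair $S \subseteq A \sqcup S \sqcup B$ (the latter being the complement of $T$), one can build a basis by augmentation — maximal independent in $S$, then in $A \sqcup S \sqcup B$, then complete through $T$ — which simultaneously achieves $|\beta \cap S| = r(S)$ and $|\beta \cap T| = \rank(\alpha_{\tla}) - r(A \sqcup S \sqcup B)$; your identification of $(|U|,|V|)$ with $(|S| - |\beta \cap S|, |\beta \cap T|)$ is the correct translation. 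This buys something real: your argument shows the conclusion holds for every non-zero $\tla$, so the $(S,T)$-generality hypothesis is indeed not needed for this particular lemma (it is used substantively only in the later lemmas on uniqueness of general lifts and composition with dominant morphisms), whereas the paper's proof stays inside its exchangeability formalism and leans on generality to force the nesting. One small stylistic caveat: the invocation of ``correctness of the matroid greedy algorithm'' is more than you need and slightly off-target (max-weight optimality is not literally the statement being used); the chain/augmentation description you give of what greedy does is itself the complete argument, and it would be cleaner to cite that directly.
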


\begin{proof}
If $\lambda$ exists, then by definition it is unique.
For existence, consider the set of pairs $(k,l)$ such that 
$\pi_{k,l}(\tla) \neq 0_{AB}$.  Since $\tla$
is non-zero, this set is non-empty.
We must show that there is a unique minimal pair.

Suppose to the contrary that there are two such minimal pairs,
$(k,l)$ and $(k',l')$, with $k < k'$, $l' < l$.
Then there exists $(X,U) \rel{\tla} (Y,V)$ with
$|U| = k$, $|V| = l$
and  $(X',U') \rel{\tla} (Y',V')$ with
$|U'| = k'$, $|V'| = l'$.  Since $\tla$ is $(S,T)$-general,
we can exchange points of $U$ for points of $U'$, and points of $V'$
for points of $V$, and thereby assume $ U \subsetneq U'$, 
$V' \subsetneq V$.  

Let $v \in V \setminus V'$.  By the exchange axiom
there exists 
$z \in (X \setminus X') \sqcup (U \setminus U') \sqcup (Y' \setminus Y) \sqcup (V' \setminus V)$
such that $v$ and $z$ are exchangeable in 
$(X,U) \rel{\tla} (Y,V)$,  Since $U \setminus U'$ and $V' \setminus V$
are empty, we must have $z \in X$ or $z \in \overline Y$.  
In the first case we have
$(X - z,V) \rel{\tla} (Y,V-v)$; in the second,
$(X,V) \rel{\tla} (Y+z,V-v)$.
Either way, this implies
that $\pi_{k,l-1}(\tla) \neq 0_{AB}$, contradicting the
minimality of $(k,l)$.
\end{proof}

For $\lambda \in \Mor_\matroid(A,B)$, consider the relation
$\tla_{k,l} \in \Rel(\pow{A \sqcup S}, \pow{B \sqcup Y})$
in which $(X,U) \rel{\tla_{k,l}} (Y,V)$
if and only if there exists $X' \rel \lambda Y'$ and $c \geq 0$
such that 
\[
  |U| = |X'\setminus X| + |Y\setminus Y'| + k + c
\qquad\text{and} \qquad
 |V| = |X\setminus X'| + |Y'\setminus Y| + l + c
\,.
\]
Informally, $\tla_{k,l}$ is the minimal relation which includes
$(X,U) \rel{\tla_{k,l}} (Y,V)$ if $X \rel\lambda Y$, $|U| = k$, $|V| = l$,
as well as all other related pairs directly implied by the 
$(S,T)$-general condition, but no others.  Thus $\tla_{k,l}$ is
the minimal viable candidate for an $(S,T)$-general lift of type
$(k,l)$. 

\begin{lemma}
\label{lem:uniquelift}
If $\lambda \in \Mor_\matroid(A,B)$, then 
$\tla_{k,l} \in \Mor_\matroid(A \otimes S, B \otimes T)$.
Furthermore, $\tla_{k,l}$ 
is the unique $(S,T)$-general lift of $\lambda$ of type $(k,l)$.
\end{lemma}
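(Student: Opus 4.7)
The plan is to verify, in order: the exchange axiom for $\tla_{k,l}$, the lift conditions $\pi_{k,l}(\tla_{k,l}) = \lambda$ and $\pi_{k',l'}(\tla_{k,l}) = 0_{AB}$ for $(k', l') \ngeq (k,l)$, and $(S,T)$-generality; uniqueness then follows. As preliminary bookkeeping, for any $(X, U) \rel{\tla_{k,l}} (Y, V)$ with witness $(X', Y', c)$, subtracting the defining equations gives $(|V| + |Y|) - (|U| + |X|) = \deg(\lambda) + l - k$ independently of the witness, so $\tla_{k,l}$ has a well-defined degree once the exchange axiom is established; non-emptiness is clear by taking $X' = X$, $Y' = Y$, $c = 0$ for any $X \rel\lambda Y$ and any $U \subseteq S, V \subseteq T$ with $|U| = k, |V| = l$.

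The main obstacle is the exchange axiom. Given two related pairs $(X_i, U_i) \rel{\tla_{k,l}} (Y_i, V_i)$ for $i = 0, 1$, with witnesses $X_i' \rel \lambda Y_i'$ and constants $c_i$, and $u$ in the appropriate set for the first pair, I would split into two cases based on the location of $u$. If $u \in S \sqcup T$, a partner $v$ can be found ``for free'' using slack in the witness: incrementing or decrementing $c_0$ by $1$ and choosing $v \in S \sqcup T$ of matching type exhibits an exchange, and the cardinality bounds $|U_i| + |V_i| \geq k + l$ together with non-negativity of $c_i$ always supply a valid choice. If $u \in A \sqcup B$, I would first replace $(X_0', Y_0')$ with a witness better adapted to $u$: by the exchange axiom for $\lambda$, one shifts elements of $X_0' \symdif X_0$ and $Y_0' \symdif Y_0$ through $\lambda$, absorbing any cardinality change into $c_0$, until $u$ sits in the same position relative to $(X_0', Y_0')$ as to $(X_0, Y_0)$. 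The exchange axiom for $\lambda$ applied to the adapted witnesses then produces $v \in \overline{X_1'} \sqcup Y_1'$, which lies in $\overline{X_1 \sqcup U_1} \sqcup Y_1 \sqcup V_1$ precisely because the discrepancies between $X_1, X_1'$ and between $Y_1, Y_1'$ are absorbed into $U_1, V_1$ via $c_1$.

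With the exchange axiom in hand, the lift conditions are immediate: if $|U| = k$ and $|V| = l$, the defining equations force $|X' \symdif X| = |Y' \symdif Y| = c = 0$, giving $X = X' \rel\lambda Y = Y'$; while for $(k', l') \ngeq (k, l)$ the non-negativity of the terms on the right-hand side of the defining equations rules out any pair with $|U| = k'$ and $|V| = l'$, so $\pi_{k',l'}(\tla_{k,l}) = 0_{AB}$. The property $(S, T)$-generality is built into the definition, since each of the eight exchange moves involving a point of $S \sqcup T$ corresponds to an adjustment of $c$ by $\pm 1$ or to a permutation within the symmetric differences, each of which preserves a valid witness.

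For uniqueness, let $\tla$ be any $(S,T)$-general lift of $\lambda$ of type $(k, l)$. Given any pair $(X, U) \rel{\tla} (Y, V)$, iterated $(S, T)$-generality allows us to reduce $|U|$ and $|V|$ to their minima by exchanging points of $S \sqcup T$ against points of $A \sqcup B$, producing a minimal pair $(X^*, U^*) \rel{\tla} (Y^*, V^*)$; the lift condition $\pi_{k',l'}(\tla) = 0_{AB}$ for $(k', l') \ngeq (k, l)$ forces $|U^*| \geq k$ and $|V^*| \geq l$, while $\pi_{k,l}(\tla) = \lambda$ combined with minimality yields $|U^*| = k$, $|V^*| = l$, and $X^* \rel\lambda Y^*$. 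Reversing the trimming exhibits the original pair as an element of $\tla_{k,l}$, so $\tla \subseteq \tla_{k,l}$; the reverse inclusion follows similarly by starting from the data $X' \rel\lambda Y'$ in the definition of $\tla_{k,l}$, using $\pi_{k,l}(\tla) = \lambda$ to find a corresponding pair in $\tla$, and then applying $(S,T)$-generality to reach the given pair.
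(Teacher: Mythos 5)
Your overall architecture (exchange axiom, then the lift conditions and generality, then a two\-/sided squeeze for uniqueness) is sound, and the uniqueness half is essentially the paper's argument. One small correction there: the ``trimming'' of $|U|+|V|$ down to $(k,l)$ is powered by the exchange axiom of $\tla$ applied against a reference pair with $|U'|=k$, $|V'|=l$, not by $(S,T)$-generality --- generality of a point $t\in T$ only supplies moves that \emph{add} $t$ to the codomain side (all four relevant cases in the exchangeability list insert $t$), so it is what you need for the re-expansion, not the descent.

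The genuine gap is in the exchange axiom for $\tla_{k,l}$, which is the real content of the lemma. In your case $u\in A\sqcup B$ you assert that the partner $v\in\overline{X_1'}\sqcup Y_1'$ produced by applying $\lambda$'s exchange axiom to the witnesses automatically lies in $\overline{(X_1,U_1)}\sqcup(Y_1,V_1)$ ``because the discrepancies between $X_1,X_1'$ are absorbed into $U_1,V_1$.'' Only the \emph{cardinalities} of those discrepancies are absorbed into $|U_1|,|V_1|$; the elements themselves are not. If $v\in X_1\setminus X_1'$ then $v\in\overline{X_1'}$ but $v\notin\overline{X_1}$, and since $v\in A$ it cannot lie in $Y_1$ or $V_1$ either, so it is not a legal partner. (Witnesses with $X_1'\neq X_1$ are unavoidable: for $\lambda=\elementary{\{a\}}{\emptyset}$ every pair with $X_1=\emptyset$ forces $X_1'=\{a\}$.) Repairing this requires a further chain of exchanges, essentially redoing the iteration in the proof of Theorem~\ref{thm:exchcomposition}; the adaptation step you describe also needs a second related pair of $\lambda$ to exchange against and must respect $c_0\geq 0$, neither of which is addressed. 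Likewise the case $u\in S\sqcup T$ is asserted rather than checked: for $u\in S\setminus U_0$ with $u\in U_1$ one must exhibit $v$ in $(\overline{Y_0}\cap Y_1)\cup(\overline{V_0}\cap V_1)\cup(X_0\setminus X_1)\cup(U_0\setminus U_1)$ compatible with an adjustment of the witness, and that this is always possible is a nontrivial case analysis. The paper sidesteps all of this by a different route: it identifies the associated matroid as $\alpha_{\tla_{k,l}}=\overline{\zeta_S^k\bigl(\overline{\zeta_T^l(\alpha_\lambda)}\bigr)}$, a $\circ$-composition of relations already known to satisfy the exchange axiom, so that Theorem~\ref{thm:exchcomposition} delivers the conclusion with no new case analysis. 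You should either adopt that reduction or carry out the direct verification in full.
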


\begin{proof}
We first verify that $\tla_{k,l}$ satisfies the exchange axiom.
To see this, we give an alternate 
description of the associated matroid $\alpha_{\tla_{k,l}}$.
For finite sets $P, Q$ and $p, q \geq 0$, let 
$\zeta_{P,Q}^{p,q} \in \Mor_\matroid(P, P \otimes Q)$
be the relation defined by
$X \rel{\zeta_{P,Q}^{p,q}} (Y,Z)$ if and only if $|X| = p$,
$Y \subseteq X$, and $|Y| + |Z| = p+q$. 
For a matroid $\alpha \in \M(P) = \Mor_\matroid(\emptyset, P)$, write 
$\zeta^q_Q(\alpha) := \alpha \circ \zeta^{p,q}_{P,Q}$, where
$p = \rank(\alpha)$.
Let $\beta_\lambda = \zeta_T^l(\alpha_\lambda)$
and
$\gamma_\lambda
= \zeta_S^k(\overline \beta_\lambda)$.
Unpacking the definitions, one can check that 
$\alpha_{\tla_{k,l}} = \overline \gamma_\lambda$,
and hence is a matroid.

It is clear that $\tla_{k,l}$ 
is both $(S,T)$-general and a lift of $\lambda$ of type $(k,l)$,
and as noted above, it is the minimal relation with this property.
It remains to show uniqueness.  Suppose $\tla'$ is another $(S,T)$-general
lift of $\lambda$ of type $(k,l)$.  
By the minimality of $\tla_{k,l}$, we have $\tla' \geq \tla_{k,l}$.
Therefore, suppose $(X,U) \rel{\tla'} (Y,V)$.  
We must show that $(X,U) \rel{\tla_{k,l}} (Y,V)$.  

We induct on $|U| + |V|$.  
By definition of a lift, we must have $|U| \geq k$, $|V| \geq l$.
If $|U| = k$, $|V| = l$, since $\tla'$ is a lift of $\lambda$,
we must have $X \rel\lambda Y$, and hence $(X,U) \rel{\tla_{k,l}} (Y,V)$.
Now, suppose without loss of generality 
that $|V| > l$ (if instead $|U| > k$, we can
apply the following argument to
$\lambda^\dagger$).
Let $X' \rel\lambda Y'$, and let $U', V'$
be subsets $U' \subseteq U$, $V' \subseteq V$, $|U'| = k$, $|V'| = l$.
Then $(X',U') \rel{\tla'} (Y',V')$.   Let $v \in V \setminus V'$.
By the exchange axiom there exists 
$z \in (U\setminus U') \sqcup (X\setminus X') \sqcup (Y'\setminus Y)
\sqcup (V' \setminus V)$ 
exchangeable for $v$ in $(X,U) \rel{\tla'} (Y,V)$.  Since $V' \setminus V = \emptyset$,  we must have $z \in U \sqcup X \sqcup \overline Y$. 
If $z \in U$, then we get
$(X,U-z) \rel{\tla'} (Y,V-v)$; by induction, 
$(X,U-z) \rel{\tla_{k,l}} (Y, V-v)$ which, since $\tla_{k,l}$ is
$(S,T)$-general, implies $(X,U) \rel{\tla_{k,l}} (Y, V)$.  
A similar argument applies if $z \in X$ or $z \in \overline{Y}$.
\end{proof}

\begin{lemma}
\label{lem:dominantgeneral}
Suppose $\tla \in \Mor_\matroid(A\otimes S,B \otimes T)$ 
is $(S,T)$-general, and $\mu \in \Mor_\matroid(B,C)$ is dominant.
Then $\tnu = \tla \circ (\mu \otimes 1_T) \in 
\Mor_\matroid(A\otimes S,C \otimes T)$ is $(S,T)$-general.
\end{lemma}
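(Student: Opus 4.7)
The plan is to verify the two conditions for $(S,T)$-generality of $\tnu$: non-vanishing, and generality in $\tnu$ of every point of $S\sqcup T$. For non-vanishing I view $\tla$ as a lift of some non-zero $\lambda$ (Lemma~\ref{lem:generallift}), pick a pair witnessing the lift, and use dominance of $\mu$ together with generality of $T$-points of $\tla$ to migrate the $B$-component of some $\tla$-pair into the corange of $\mu$, thereby producing a pair in $\tnu$. The substantive content of the lemma is generality, and I handle it for $t\in T$; the $s\in S$ case is structurally identical, using dominance of $\mu$ in precisely the same role.

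Fix $t\in T$ and suppose $(X,U)\rel{\tnu}(Z,W)$ with $t\notin W$. Unfolding composition gives $Y\subseteq B$ with $(X,U)\rel{\tla}(Y,W)$ and $Y\rel{\mu}Z$. Let $z'\in\overline{(X,U)}\sqcup(Z,W) = \overline X\sqcup\overline U\sqcup Z\sqcup W$. When $z'$ lies in $\overline X\sqcup\overline U\sqcup W$, it already lies in $\overline{(X,U)}\sqcup(Y,W)$; since $t\notin W$, $(S,T)$-generality of $\tla$ produces an exchanged $\tla$-pair in which the $B$-component $Y$ is unchanged. Post-composing with the unaltered $Y\rel{\mu}Z$ via $\mu\otimes 1_T$ yields the required exchange in $\tnu$, and a clause-by-clause check against the eight exchangeability cases confirms that the clause invoked in $\tla$ matches the one required for $\tnu$.

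The remaining case $z'\in Z$ is where dominance is essential. Since $Y\rel{\mu}Z$ and $z'\in Z$, dominance of $\mu$ supplies $y\in Y$ with $Y-y\rel{\mu}Z-z'$. Now $y\in Y\subseteq(Y,W)$ while $t\in\overline W\subseteq\overline{(Y,W)}$, so generality of $t$ in $\tla$ yields $(X,U)\rel{\tla}(Y-y,W+t)$. Composing with $(Y-y)\rel{\mu}(Z-z')$ produces $(X,U)\rel{\tnu}(Z-z',W+t)$, which is exactly the required exchange of $t$ and $z'$ in $(X,U)\rel{\tnu}(Z,W)$. This step is the conceptual heart of the lemma: dominance of $\mu$ converts an otherwise inaccessible $C$-element $z'$ into a $B$-element $y$ that $\tla$-generality can operate on, after which composition with $\mu\otimes 1_T$ packages the result as a $\tnu$-exchange. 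This is the main obstacle; the rest is mechanical bookkeeping against the exchange clauses.
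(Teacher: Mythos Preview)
Your proof is correct and follows essentially the same approach as the paper. You condense the paper's four explicit cases (a)--(d) for $t\in T$ into one ``easy'' group ($z'\in\overline X\sqcup\overline U\sqcup W$, where the $B$-component $Y$ is untouched) plus the one ``hard'' case ($z'\in Z$, where dominance of $\mu$ is needed), and your treatment of the hard case is identical to the paper's case (d). Your remark that the $s\in S$ argument is structurally identical is accurate: the paper's case (d$'$) uses dominance to produce $y\in\overline Y$ with $Y+y\rel\mu Z+z$, mirroring the $t$-case. One minor note: your non-vanishing argument is more than the paper attempts (the paper's proof silently omits it), but as written it implicitly requires $S$ and $T$ to be large enough to absorb the migration; this is not guaranteed by the hypotheses of the lemma alone, but is supplied downstream in Lemma~\ref{lem:bulletlift} where the result is actually applied.
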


\begin{proof}
Let $t \in T$.  Suppose $(X,U) \rel\tnu (Z,V)$, 
$t \notin V$.
We must show the following:
\begin{enumerate}[(a)]
\item For $v \in V$, $(X,U) \rel\tnu (Z,V-v+t)$.
\item For $u \in \overline U$, $(X,U+u) \rel\tnu (Z,V+t)$.
\item For $x \in \overline X$, $(X+x,U) \rel\tnu (Z,V+t)$.
\item For $z \in Z$, $(X,U) \rel\tnu (Z-z,V+t)$.
\end{enumerate}

By definition of strict composition there exists $Y$ such that
$Y \rel \mu Z$, and $(X,U) \rel{\tla} (Y,V)$.
For (a), let $v \in V$.  Since
$t$ is general in $\tla$, we have
$(X,U) \rel{\tla} (Y,V-v+t)$, and thus by composition
$(X,U) \rel\tnu (Z,V-v+t)$.  The arguments for (b) and (c) are 
essentially the same.
As for (d), 
since $\mu$ is dominant there exists $y \in Y$ such that
$Y-y \rel\mu Z-z$.  Since
$t$ is general in $\tla$, we have
$(X,U) \rel{\tla} (Y-y,V+t)$.  Thus by composition
$(X,U) \rel\tnu (Z-z,V+t)$.

Now, let $s \in S$.  Suppose $(X,U) \rel\tnu (Z,V)$, 
$s \notin U$.
In this case, we must show:
\begin{enumerate}[(a$'$)]
\item For $v \in \overline V$, $(X,U+s) \rel\tnu (Z,V+v)$.
\item For $u \in U$, $(X,U-u+s) \rel\tnu (Z,V)$.
\item For $x \in X$, $(X-x,U+s) \rel\tnu (Z,V+t)$.
\item For $z \in \overline Z$, $(X,U+s) \rel\tnu (Z+z,V)$.
\end{enumerate}

Again, there exists $Y$ such that
$Y \rel \mu Z$, and $(X,U) \rel{\tla} (Y,V)$.
The arguments for (a$'$)--(c$'$) are essentially the same as (a)--(c).
For (d$'$), this time using the fact that
$\mu$ is dominant, there exists $y \in \overline Y$ such that
$Y+y \rel\mu Z+z$, and since 
$(X,U+s) \rel\tla (Y+y,V)$ we deduce that
$(X,U+s) \rel\tnu (Z+z,V)$.
\end{proof}

\begin{lemma}
\label{lem:bulletlift}
Suppose $\lambda \in \Mor_\matroid(A,B)$, $\mu \in \Mor_\matroid(B,C)$,
$\nu \in \Mor_\matroid(A,C)$, and $\mu$ is dominant.  
Let $\tla$ be an $(S,T)$-general lift of $\lambda$.
Let $\tnu = \tla \circ (\mu \otimes 1_T)$.

Assume that $\lifttype{\tla}{\lambda} \leq (|S|-|B|, |T|-|B|)$.
Then the following are equivalent: 
\begin{enumerate}
\item[(a)] $\lambda \bullet \mu = \nu$\,;
\item[(b)] $\tnu$ is an $(S,T)$-general lift of $\nu$.
\end{enumerate}
Furthermore if these conditions hold, then 
$\lambda \bullet \mu$ has type
\[
   \type{\lambda}{\mu}
     = \lifttype{\tnu}{\nu} - \lifttype{\tla}{\lambda}
\,.
\]
\end{lemma}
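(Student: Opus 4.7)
The plan is to reduce the statement to Proposition~\ref{prop:bulletcirc} via a two-sided comparison of the projections $\pi_{a,b}(\tnu)$ with covering compositions. First I would unwind $\tnu = \tla \circ (\mu \otimes 1_T)$ to obtain the fundamental identity
\[
    \pi_{a,b}(\tnu) \,=\, \pi_{a,b}(\tla) \circ \mu \qquad \text{for all } a,b\geq 0,
\]
since $X \rel{\pi_{a,b}(\tnu)} Z$ iff there exist $U\subseteq S$, $V\subseteq T$, $Y\subseteq B$ with $|U|=a$, $|V|=b$, $(X,U)\rel{\tla}(Y,V)$ and $Y\rel{\mu}Z$. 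By Lemma~\ref{lem:dominantgeneral}, $\tnu$ is $(S,T)$-general, and Lemma~\ref{lem:generallift} then supplies a unique $\nu_0$ of which (non-zero) $\tnu$ is a lift, with some type $(k^*,l^*)$. In this way both (a) and (b) reduce to the identifications $\nu_0 = \lambda\bullet\mu$ and $(k^*,l^*) = (k,l) + \type{\lambda}{\mu}$, where $(k,l) = \lifttype{\tla}{\lambda}$.

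The core technical step is to establish, for $0 \leq p, q \leq |B|$, the two-sided sandwich
\[
    \lambda \bullet_{p,q} \mu \;\leq\; \pi_{k+p,\,l+q}(\tnu) \;\leq\; \lambda \circ \covercomp{p,q} \circ \mu,
\]
for a suitably chosen covering composition on the right. The room condition $(k,l) \leq (|S|-|B|,\,|T|-|B|)$ is precisely what ensures that $\pi_{k+p,\,l+q}(\tla)$ is well-defined for all such $p,q$. The lower bound comes from lifting a witness $X\rel{\lambda}Y'$, $Y\rel{\mu}Z$ (with $|Y \setminus Y'|=p$, $|Y' \setminus Y|=q$) to $(X,U)\rel{\tla_{k,l}}(Y,V)$ via the explicit description of the minimal $(S,T)$-general lift in Lemma~\ref{lem:uniquelift}, absorbing the discrepancy into extra points of $S$ and $T$; since any $(S,T)$-general lift satisfies $\tla \geq \tla_{k,l}$, the same witness lifts to $\tla$, and composing with $\mu$ gives the inclusion. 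The upper bound is obtained by using the exchange axiom in $\lambda$ to convert every $A$-side modification permitted by $\pi_{k+p,\,l+q}(\tla)$ into an additional $B$-side modification, which then composes with $\mu$ through the factor $\covercomp{p,q}$.

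With the sandwich established, Proposition~\ref{prop:bulletcirc} closes the argument. The upper bound together with part~(ii) forces $\totaltype{\lambda}{\mu} \leq p+q$ whenever $\pi_{k+p,\,l+q}(\tnu)\neq 0$, and in particular $\totaltype{\lambda}{\mu} \leq (k^*-k) + (l^*-l)$. The lower bound together with $\lambda \bullet_{k_0,l_0}\mu \neq 0$ gives $(k^*,l^*)\leq(k+k_0,\,l+l_0)$ for every such $(k_0,l_0)$. Comparing these inequalities across all eligible $(k_0,l_0)$ forces the type of $\lambda\bullet\mu$ to be definite, yields $(k^*,l^*) = (k+k_0,\,l+l_0)$ for the unique such pair, and, via part~(iii), identifies $\nu_0 = \pi_{k^*,l^*}(\tnu) = \lambda\circ\covercomp{k_0,l_0}\circ\mu = \lambda\bullet\mu$. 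Both directions of the equivalence and the type formula then follow simultaneously.

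The main obstacle is the upper bound in the sandwich: whereas the lower bound comes cleanly from the minimal lift and the room condition, the upper bound requires trading $A$-side modifications (permitted by $(S,T)$-generality of $\tla$) for $B$-side cover steps via the exchange axiom in $\lambda$, and showing that the composition with $\mu$ is indifferent to $A$-side changes thanks to dominance. It is here that $(S,T)$-generality, the exchange axiom in $\lambda$, dominance of $\mu$, and the room condition on $|S|,|T|$ all need to cooperate.
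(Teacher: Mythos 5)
Your architecture matches the paper's: pass to the general lift, invoke Lemmas~\ref{lem:generallift} and~\ref{lem:dominantgeneral} to produce the candidate $\nu_0$ of lift type $(k^*,l^*)$, and pin everything down by comparing projections. The lower half of your sandwich, $\lambda\bullet_{p,q}\mu\leq\pi_{k+p,\,l+q}(\tnu)$, is correct, and your use of the room hypothesis $\lifttype{\tla}{\lambda}\leq(|S|-|B|,|T|-|B|)$ to guarantee that the witnessing sets $U,V$ exist is exactly the role it plays in the paper.

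The gap is the upper half, $\pi_{k+p,\,l+q}(\tnu)\leq\lambda\circ\covercomp{p,q}\circ\mu$: this is false as an inequality of relations. Any relation of the form $\lambda\circ(\cdots)$ can relate $X$ to something only when $X$ lies in the corange of $\lambda$, whereas by Lemma~\ref{lem:uniquelift} the lift $\tla=\tla_{k,l}$ relates $(X,U)$ to $(Y,V)$ for many $X$ outside the corange: it suffices that \emph{some} $X'\rel\lambda Y'$ satisfy the two counting equations. For instance, with $\lambda=\elementary{\{a\}}{\{b\}}$ one has $(\emptyset,U)\rel{\tla}(\{b\},V)$ for $|U|=k+1$, $|V|=l$, so $\pi_{k+1,\,l}(\tnu)$ relates $\emptyset$ to a point of $\pow{C}$, while $\lambda\circ\covercomp{1,0}\circ\mu$ does not relate $\emptyset$ to anything. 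Your proposed repair --- using the exchange axiom in $\lambda$ to trade $A$-side discrepancies for $B$-side cover steps --- cannot work, since no sequence of exchanges produces a pair $X\rel\lambda W'$ when $X$ is not in the corange to begin with. The good news is that everything you need from the upper bound follows directly from the counting equations defining $\tla_{k,l}$: if $\pi_{k+p,\,l+q}(\tnu)\neq0_{AC}$, those equations produce $X'\rel\lambda Y'$ and $Y\rel\mu Z$ with $|Y\symdif Y'|\leq p+q$, whence $\totaltype{\lambda}{\mu}\leq p+q$; and at the minimal pair $(p,q)=(k^*-k,\,l^*-l)$, equality forces $c=0$ and $X=X'$, which yields the reverse containment $\pi_{k^*,l^*}(\tnu)\leq\lambda\bullet_{k^*-k,\,l^*-l}\mu$ with no detour through $\covercomp{p,q}$. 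Substituting this direct argument for your upper bound, the squeeze closes as you intend, and the result is essentially the paper's proof.
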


\begin{proof}
By Lemmas \ref{lem:generallift} and \ref{lem:dominantgeneral}
there is a unique morphism $\nu$ such (b) holds.  
Define $\nu$ to be this morphism. 
We show $X \rel {\lambda \bullet \mu} Z$ if and only if 
$X \rel \nu Z$.

Let $(k,l)$ be the type of the lift $\tla$, and let $(k',l')$ be the
type of the lift $\tnu$.  
By Lemma~\ref{lem:uniquelift},
$\tla = \tla_{k,l}$ and $\tnu = \tnu_{k',l'}$.
We note that since 
$\pi_{k',l'}(\tnu) \neq 0_{AC}$, by definition of strict composition, we 
have $\pi_{k',l'}(\tla) \neq 0_{AB}$, and therefore we must have
$(k',l') \geq (k,l)$.

Suppose $X \rel{\lambda \bullet \mu} Z$.
By definition, there exists $Y,Y'$ such that 
$|Y \symdif Y'| = \totaltype{\lambda}{\mu}$, $X \rel\lambda Y'$, $Y \rel \mu Z$.
Let $k_0 = |Y \setminus Y'|$ and $l_0 = |Y' \setminus Y|$.
Then by the definition of $\tla_{k,l} = \tla$, 
if $|U| = k + k_0$,  $|V| = l + l_0$ we have $(X,U) \rel\tla (Y,V)$,
and therefore $(X,U) \rel \tnu (Z,V)$.

Since, $\lambda \bullet \mu \neq 0_{AC}$, there exists
at least one pair $X \rel{\lambda \bullet \mu} Z$, and hence
$\pi_{k+k_0, l+l_0}(\tnu) \neq 0_{AC}$.  Since $\tnu$ has
type $(k',l')$, we deduce that 
\[
   (k+k_0, l+l_0) \geq (k',l')
\,,
\]
and hence $\totaltype{\lambda}{\mu} = k_0 + l_0 \geq (k'-k) + (l'-l)$.

Now suppose that $X \rel\nu Z$.  Then if $|U| = k'$, $|V| = l'$ we have
$(X,U) \rel\tnu (Z,V)$.  Hence there exists $Y$ such that 
$(X,U) \rel\tla (Y,V)$, and $Y \rel\mu Z$.  By definition of 
$\tla_{k,l} = \tla$, there exist $X',Y'$ such that $X' \rel\lambda Y'$
and
\[
    k' = |X'\setminus X| + |Y\setminus Y'| + k + c
\qquad\text{and}\qquad
    l' =  |X\setminus X'| + |Y'\setminus Y| + l + c 
\,.
\]
Adding these equations,
\[
    |Y \symdif Y'| = (k'-k) + (l'-l) - 2c - |X \symdif X'|
    \leq (k'-k) + (l'-l)
\,,
\]
with equality if $c = 0$ and $X = X'$.
Also, since $X' \rel\lambda Y'$ and $Y \rel\mu Z$, by the definition
of $\totaltype{\lambda}{\mu}$ we must have $\totaltype{\lambda}{\mu} \leq |Y \symdif Y'|$. 

Since $\nu \neq 0_{AC}$ there exists at least one pair 
$X \rel\nu Z$, and so we deduce that $\totaltype{\lambda}{\mu}  \leq (k'-k) + (l'-l)$.

Therefore all inequalities in the arguments above must be equalities.
If $X \rel{\lambda \bullet \mu} Z$, then if $|U| = k'$, $|V| = l'$,
there exists $Y,Y'$ such
that $X \rel\lambda Y'$, $Y \rel \mu Z$ such that 
$|Y \setminus Y'| = k'-k$ and $|Y' \setminus Y| = l'-l$,
$|U| = k'$, $V = l'$, and $(X,U) \rel \tnu (Z,V)$.  Therefore
$X \rel\nu Z$.

Conversely, if $X \rel \nu Z$ then if $|U| = k'$, $|V| = l'$, there
exists exists $Y,Y'$ such that $X \rel\lambda Y'$, $Y \rel\mu Z$,
and $|Y \symdif Y'| = \totaltype{\lambda}{\mu}$.  Thus, $X \rel{\lambda \bullet \mu} Z$.

If these equivalent conditions hold then the preceding arguments
show that $(k'-k,l'-l)$
is the type of $\lambda \bullet \mu$, as required.
(Note: we have implicitly used the inequality 
$\lifttype{\tla}{\lambda} \leq (|S|-|B|, |T|-|B|)$ 
to ensure that the sets $U,V$ of the appropriate size exist, in each of
the arguments above.)
\end{proof}

In particular, Lemma~\ref{lem:bulletlift} shows that $\lambda \bullet \mu$
has a definite type when $\mu$ is dominant.
We deduce the first part of Theorem~\ref{thm:bullet}.

\begin{proof}[Proof of Theorem~\ref{thm:bullet}(i)]
Observe that whether or not $\lambda \bullet \mu$ has a definite
type depends only on the range of $\lambda$ and the corange of $\mu$.
Consider the morphism $\nu$ defined in Example \ref{ex:makedominant}.
Then $\nu$ is dominant and $\range(\nu^\dagger) = \range(\mu^\dagger)$.
Therefore, since $\lambda \bullet \nu$ has a definite type,
so does $\lambda \bullet \mu$.  The fact that 
$\lambda \bullet \mu$ satisfies the exchange axiom now follows
from Proposition~\ref{prop:bulletcirc}.
\end{proof}

%%%%%%%%%%%%%%%%%%%%%%%%%%%%%%%%%%%%%%%%%%%%%%%%%%%%%
%%%%%%%%%%%%%%%%%%%%%%%%%%%%%%%%%%%%%%%%%%%%%%%%%%%%%
%%%%%%%%%%%%%%%%%%%%%%%%%%%%%%%%%%%%%%%%%%%%%%%%%%%%%

\section{Associativity}
\label{sec:assoc}

In this section, we prove parts (ii) and (iii) of Theorem~\ref{thm:bullet}.
Throughout this section we assume $A,B,C,D,$ and $\lambda, \mu, \nu$ are as in
the statement of Theorem~\ref{thm:bullet}.  

\begin{lemma}
\label{lem:specialassoc}
Suppose $\lambda \circ \mu \neq 0_{AC}$, $\mu \circ \nu \neq 0_{BD}$,
and $\lambda \circ \mu \circ \nu = 0_{AD}$.
\begin{enumerate}[(i)]
\item
$
    (\lambda \circ \mu) \bullet_{1,0} \nu = 
    \lambda \bullet_{1,0} (\mu \circ \nu) 
$
\item 
$    (\lambda \circ \mu) \bullet_{0,1} \nu = 
     \lambda  \bullet_{0,1} (\mu \circ \nu) 
$
\item 
If $(\lambda \circ \mu) \bullet_{0,1} \nu  = 0_{AD}$ and
$(\lambda \circ \mu)\bullet_{1,0}\nu   = 0_{AD}$
then
\[
    (\lambda \circ \mu)\bullet_{1,1} \nu  = 
     \lambda \bullet_{1,1}  (\mu \circ \nu) 
\,.
\]
\end{enumerate}
\end{lemma}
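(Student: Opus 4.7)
The approach is to establish each of (i)--(iii) by showing both inclusions of relations, using the exchange axiom together with the three hypotheses. The critical role of $\lambda \circ \mu \circ \nu = 0_{AD}$ is to forbid any chain $X^* \rel{\lambda} Y^* \rel{\mu} Z^* \rel{\nu} W^*$; consequently, whenever the exchange axiom would produce such a chain, we derive a contradiction that rules out the relevant exchangeability case. The other two hypotheses supply the witness relations needed to invoke the exchange axiom.

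For part (i), consider the inclusion $(\lambda \circ \mu) \bullet_{1,0} \nu \leq \lambda \bullet_{1,0} (\mu \circ \nu)$; the reverse inclusion will be symmetric. Suppose $X \rel{\lambda} Y \rel{\mu} Z$ and $(Z+c) \rel{\nu} W$ with $c \notin Z$; we must produce $X \rel{\lambda} Y^\star$, $(Y^\star + b) \rel{\mu} Z^\star \rel{\nu} W$ with $b \notin Y^\star$. Taking $Y^\star = Y$ and $Z^\star = Z+c$, it suffices to find $b \in B \setminus Y$ with $(Y+b) \rel{\mu} (Z+c)$. Viewing $\alpha_\mu$ as a matroid on $B \sqcup C$, the given $Y \rel{\mu} Z$ means $\overline{Y} \sqcup Z$ is a basis; matroid augmentation applied to $c$ yields $e$ in this basis with $\overline{Y} \sqcup Z - e + c$ again a basis. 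The ``good'' case $e \in \overline{Y}$ gives the basis $\overline{Y+e} \sqcup (Z+c)$, allowing $b = e$. The ``sideways'' case $e \in Z$ produces only $Y \rel{\mu} (Z-e+c)$; but then the hypothesis $\lambda \circ \mu \circ \nu = 0$, combined with $X \rel{\lambda} Y \rel{\mu} (Z-e+c)$, forces $(Z-e+c) \notin \domain(\nu)$. In this case we pivot to the exchange axiom for $\nu$ on $(Z+c) \rel{\nu} W$ with $u = c$ on the domain side; the same hypothesis forces $Z \notin \domain(\nu)$, so the exchangeability option that would yield $Z \rel{\nu} (W-v)$ is ruled out, and we instead obtain some $c' \in C \setminus (Z+c)$ with $(Z+c') \rel{\nu} W$. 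Retrying the matroid augmentation with $c'$ in place of $c$ either succeeds or extracts yet another $c''$, and iterating generates a sequence of distinct elements of $C$; finiteness of $C$ forces eventual success.

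Part (ii) reduces to part (i) by adjointness. Since $(\sigma \bullet_{k,l} \tau)^\dagger = \tau^\dagger \bullet_{l,k} \sigma^\dagger$, the equation in (ii) for $(\lambda, \mu, \nu)$ is the adjoint of that in (i) for $(\nu^\dagger, \mu^\dagger, \lambda^\dagger)$, whose hypotheses follow immediately from the original three. Part (iii) will be handled analogously to (i) but tracking two missing points simultaneously: the additional hypotheses, together with parts (i) and (ii), rule out any reduction to a $\bullet_{1,0}$ or $\bullet_{0,1}$ composition, so the only surviving configurations contributing to $\bullet_{1,1}$ on each side must correspond under the same augmentation/$\nu$-exchange machinery.

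The main obstacle will be rigorously verifying termination of the augmentation-plus-$\nu$-exchange iteration for (i): the elements $c, c', c'', \ldots$ must remain genuinely distinct rather than cycling. I expect this to follow from a decreasing potential argument on a quantity like $|Z \symdif Z^\sharp|$ for an appropriately chosen auxiliary witness $Z^\sharp$, in the spirit of the iterative construction used in the proof of Theorem~\ref{thm:exchcomposition}.
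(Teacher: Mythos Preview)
Your overall strategy---use the exchange axiom together with the hypothesis $\lambda \circ \mu \circ \nu = 0_{AD}$ to rule out the unwanted exchangeability cases---matches the paper's, and your reduction of (ii) to (i) via adjoints is exactly right (the paper likewise reduces everything to a single one-sided inclusion, obtaining the reverse inclusion by substituting $(\overline{\nu}^\dagger, \overline{\mu}^\dagger, \overline{\lambda}^\dagger)$ for $(\lambda, \mu, \nu)$). The substantive difference is organizational: where you iterate, the paper minimizes. Specifically, the paper \emph{fixes} a witness $X_0 \rel{\mu} Y_0 \rel{\nu} Z_0$ from the hypothesis $\mu \circ \nu \neq 0$, and then among all $(X,Y,Y')$ witnessing $W \rel{(\lambda\circ\mu)\bullet_{0,1}\nu} Z$, chooses one minimizing
\[
|Y_0 \cap \overline{Y} \cap \overline{Y'}| + |\overline{Y_0} \cap Y \cap Y'|.
\]
A single application of the exchange axiom (on $\nu$ if the extra element $y$ lies in $Y_0$, on $\mu$ if it lies in $\overline{Y_0}$) then yields either the desired conclusion, or a chain contradicting $\lambda\circ\mu\circ\nu = 0_{AD}$, or a new triple contradicting minimality. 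No iteration is needed, and the argument for (iii) follows the same template with the extra hypotheses ruling out the $\bullet_{1,0}$ and $\bullet_{0,1}$ contributions.

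Your iterative version has a genuine gap that you yourself flag: termination. The specific problem is that your ``exchange axiom for $\nu$ on $(Z+c)\rel{\nu}W$ with $u=c$'' step requires a second witness pair $Z_0 \rel{\nu} W_0$, which you never name; and once you do name it, the exchange axiom may simply return $v = c$ whenever $c \in Z_0$, stalling the iteration. The potential-function fix you gesture at (``something like $|Z \symdif Z^\sharp|$'') is not obviously monotone under your replacement $c \mapsto c'$, because in your scheme the base set $Z$ does not change---only the extra element $c$ does. The paper's minimality quantity is precisely the missing organizing principle: it is defined relative to a \emph{fixed} $Y_0$ coming from the hypothesis $\mu\circ\nu \neq 0$, and it decreases strictly under every ``sideways'' exchange, which is what converts your open-ended iteration into a one-step argument. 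For part (iii) the same minimality principle works, but you will need one additional exchange (between the two witnesses $X\!-\!x_1 \rel{\mu} Y'\!-\!y_1$ and $X\!+\!x_2 \rel{\mu} Y'\!+\!y_2$) to combine the two single-element modifications into one $X - x_1 + x_2 \rel{\mu} Y$; your sketch does not anticipate this.
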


\begin{proof}
We will prove the following two statements:
\begin{enumerate}
\item[(A)] $(\lambda \circ \mu) \bullet_{0,1} \nu 
\leq \lambda \bullet_{0,1} (\mu \circ \nu)$;
\item[(B)] 
Under the hypotheses of (iii),
$(\lambda \circ \mu) \bullet_{1,1} \nu 
\leq \lambda \bullet_{1,1} (\mu \circ \nu)$.
\end{enumerate}
This suffices, as 
(i) is equivalent to (ii),
by taking $(\nu^\dagger, \mu^\dagger, \lambda^\dagger)$
in place of $(\lambda, \mu, \nu)$;
(ii) is equivalent to (A) being true for both
$(\lambda, \mu, \nu)$ and 
$({\overline\nu}^\dagger, {\overline \mu}^\dagger, 
\smash{\overline \lambda}^\dagger)$; 
and (iii) is equivalent to (B) being true for both
$(\lambda, \mu, \nu)$ and 
$(\nu^\dagger, \mu^\dagger, 
\lambda^\dagger)$.

Fix $X_0, Y_0, Z_0$ such that $X_0 \rel \mu Y_0 \rel\nu Z_0$.  
These exist, since $\mu \circ \nu \neq 0$.

We first prove (A). Suppose that 
$W \rel { (\lambda \circ \mu) \bullet_{0,1} \nu  } Z$.  
Then there exists $X,Y,Y'$ such that $W \rel \lambda X \rel \mu Y'$,
$Y \rel\nu Z$, and $Y = Y'-y$ for some $y \in Y'$.  
Subject to these conditions, assume that
$X, Y,Y'$ are chosen such that 
\begin{equation}
\label{eq:minimality}
|Y_0 \cap \overline Y \cap \smash{\overline Y}'| 
+ |\overline Y_0 \cap Y \cap  Y'|
\quad \text{is minimal}.
\end{equation}

Suppose $y \in Y_0$.  Using the exchange axiom 
on $Y \rel\nu Z$ and $Y_0 \rel \nu Z_0$, there exists either
$z \in \overline Z$ such that $Y' = Y+y \rel \nu Z+z$, or 
$y' \in Y \setminus Y_0$ such that $Y+y-y' \rel\nu Z$.  In the
former case we have $W \rel\lambda X \rel \mu Y' \rel\nu Z+z$
which contradicts the assumption $\lambda \circ \mu \circ \nu = 0_{AD}$.
In the latter case $(X, Y+y-y', Y')$ contradicts the minimality assumption.
Therefore, we must have $y \in \overline{Y}_0$.  Using the exchange axiom
on $X \rel \mu Y'$ and $X_0 \rel \mu Y_0$, there exists either
$x \in X$ such that $X-x \rel\mu Y'-y = Y$, or 
$y' \in Y_0 \setminus Y'$ such that $X \rel\mu Y'-y + y'$.  However,
in the latter case $(X,Y, Y'-y+y')$ 
contradicts the minimality assumption, so the
former must hold.  Thus we have
$W \rel \lambda X$ and $X-x \rel\mu Y \rel\nu Z$, which implies
$W \rel { \lambda \bullet_{0,1} (\mu \circ \nu)  } Z$.
This completes the proof of (A).

Next we prove (B).
Suppose that 
$W \rel { (\lambda \circ \mu) \bullet_{1,1} \nu } Z$.  
Then there exists $X,Y,Y'$ such that $W \rel \lambda X \rel \mu Y'$,
$Y \rel\nu Z$, and $Y = Y'-y_1+y_2$ for some $y_1 \in Y'$, 
$y_2 \in \smash{\overline Y}'$.  As before, we may assume that
\eqref{eq:minimality} holds.
%$|Y_0 \cap \overline Y \cap \overline Y'| 
%+ |\overline Y_0 \cap Y \cap  Y'|$
%%$|Y_0 \setminus (Y \cup Y')| + |(Y \cap Y') \setminus Y_0|$
%$|Y \symdif Y_0| + |Y' \symdif Y_0|$ 
%is minimal.

First note that $y_1$ and $y_2$ are not exchangeable in either
$X \rel\mu Y'$ or $Y \rel\nu Z$. Otherwise, we have 
$W \rel\lambda X \rel \mu Y \rel\nu Z$ or 
$W \rel\lambda X \rel \mu Y' \rel\nu Z$; in either case, this
contradicts the assumption $\lambda \circ \mu \circ \nu = 0_{AD}$.

We now proceed as in the proof of (A).
If $y_1 \in Y_0$ or $y_2 \in \overline {Y_0}$, then using the
exchange axiom, we obtain a contradiction with either the 
minimality assumption, or with the hypotheses
$(\lambda \circ \mu) \bullet_{1,0} \nu =
(\lambda \circ \mu) \bullet_{0,1} \nu = 0_{AD}$.
Therefore $y_1 \in \overline {Y_0}$ and $y_2 \in Y_0$, and 
by the exchange axiom and the minimality assumption,
there exist $x_1 \in X$ and $x_2 \in \overline{X}$ such that 
$X - x_1 \rel\mu Y' -y_1$ and $X + x_2 \rel\mu Y' + y_2$.

Now apply the exchange axiom to this pair: we must have either
$X - x_1 + x_2 \rel \mu Y' -y_1 + y_2 = Y$ or $X \rel \mu Y'-y_1+y_2 = Y$.
However, by definition, 
the latter implies that $y_1$ and $y_2$ are exchangeable 
in $X \rel \nu Y'$.
Therefore the former must be true; hence
$W \rel \lambda X$ and $X-x_1+x_2 \rel \mu Y \rel \nu Z$, which implies
$W \rel{ \lambda  \bullet_{1,1} (\mu \circ \nu)} Z$.
This completes the proof of (B).
\end{proof}

\begin{lemma}
\label{lem:steps}
Suppose $\mu \circ \nu \neq 0_{BD}$ and 
$ \mu \bullet_{k,l} \nu \neq 0_{BD}$, where
$k,l$ are non-negative integers.
Then at least one of the following must be true.
\begin{enumerate}[(a)]
\item $(k,l) = (0,0)$.
\item $k > 0$ and 
$\mu \bullet_{k-1,l} \nu \neq 0_{BD}$.
\item $l > 0$ and 
$\mu \bullet_{k,l-1} \nu \neq 0_{BD}$.
\item $k >0$, $l > 0$ and 
$\mu \bullet_{k-1,l-1} \nu \neq 0_{BD}$.
\end{enumerate}
\end{lemma}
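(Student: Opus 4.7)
The plan is a minimality argument on a well-chosen potential function. If $(k,l)=(0,0)$ then conclusion (a) holds, so assume $(k,l)\neq(0,0)$. By the symmetry $(\mu,\nu,k,l)\mapsto(\nu^\dagger,\mu^\dagger,l,k)$---which swaps cases (b) and (c) while preserving (a) and (d), using that $(\mu\bullet_{k,l}\nu)^\dagger=\nu^\dagger\bullet_{l,k}\mu^\dagger$---I may further assume $k>0$. Fix $X_0\rel\mu Y_0$ and $Y_0\rel\nu Z_0$, which exist since $\mu\circ\nu\neq 0_{BD}$. Among all quadruples $(X,Y,Y',Z)$ witnessing $\mu\bullet_{k,l}\nu\neq 0_{BD}$, choose one minimizing
\[
\Phi(Y,Y') := |Y\cap\overline{Y_0}|+|Y'\cap\overline{Y_0}|\,,
\]
and pick $y\in Y\setminus Y'$.

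If $y\in Y_0$, I apply the exchange axiom for $\mu$ to the relations $X\rel\mu Y'$ and $X_0\rel\mu Y_0$ with $u=y\in\overline{Y'}$. Since $y\in Y_0$, the trivial exchange $v=y$ is ruled out, producing a non-trivial $v\in(\overline X\cap X_0)\sqcup(\overline{Y_0}\cap Y')$. The subcase $v\in\overline X\cap X_0$ gives $(X+v)\rel\mu(Y'+y)$, a witness of type $(k-1,l)$ for case (b). The subcase $v\in\overline{Y_0}\cap Y'\cap\overline Y$ gives $X\rel\mu(Y'+y-v)$ of type $(k-1,l-1)$, witnessing case (d). In the remaining subcase $v\in\overline{Y_0}\cap Y'\cap Y$, the new quadruple $(X,Y,Y'+y-v,Z)$ still has type $(k,l)$, but $\Phi$ strictly decreases---since $v\in\overline{Y_0}$ is removed from $Y'$ and $y\in Y_0$ is added---contradicting minimality.

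If instead $y\in\overline{Y_0}$, I apply the exchange axiom for $\nu$ to $Y\rel\nu Z$ and $Y_0\rel\nu Z_0$ with $u=y\in Y$. The trivial exchange is again excluded, yielding $w\in(Y_0\cap\overline Y)\sqcup(\overline{Z_0}\cap Z)$. The subcase $w\in Y_0\cap\overline Y\cap Y'$ gives $(Y-y+w)\rel\nu Z$ of type $(k-1,l-1)$, a witness for (d); the subcase $w\in\overline{Z_0}\cap Z$ gives $(Y-y)\rel\nu(Z-w)$ of type $(k-1,l)$, a witness for (b); and the subcase $w\in Y_0\cap\overline Y\cap\overline{Y'}$ keeps the type at $(k,l)$ while decreasing $\Phi$ by one (since $y\in\overline{Y_0}$ is replaced in $Y$ by $w\in Y_0$), contradicting minimality.

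The main obstacle is identifying the right potential $\Phi$: it must strictly decrease in precisely those subcases where neither the type nor a witness for (b), (c), (d) changes. The choice $\Phi(Y,Y')=|Y\cap\overline{Y_0}|+|Y'\cap\overline{Y_0}|$ works because the two ``bad'' subcases each swap a point of $\overline{Y_0}$ for one in $Y_0$; once this is in place, the remaining verifications of $|Y\setminus Y'|$ and $|Y'\setminus Y|$ are routine symmetric-difference bookkeeping.
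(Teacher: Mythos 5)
Your proof is correct and follows essentially the same strategy as the paper's: fix a base chain $X_0\rel\mu Y_0\rel\nu Z_0$, take a witness of $\mu\bullet_{k,l}\nu\neq 0_{BD}$ minimizing a potential measuring disagreement with $Y_0$, and apply the exchange axiom to a point of $Y\symdif Y'$, landing in case (b) or (d) or contradicting minimality. The only differences are organizational --- you reduce to $k>0$ by the duality $(\nu^\dagger,\mu^\dagger,l,k)$ up front where the paper instead runs four membership cases with dualities applied inside, and your potential $|Y\cap\overline{Y_0}|+|Y'\cap\overline{Y_0}|$ differs from the paper's $|Y_0\cap\overline{Y}\cap\overline{Y}'|+|\overline{Y}_0\cap Y\cap Y'|$ but decreases in exactly the same bad subcases --- so both work equally well.
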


\begin{proof}
Fix $X_0 \rel \mu Y_0 \rel \nu Z_0$, and suppose $X \rel
{ \mu \bullet_{k,l} \nu} Z$.  Then there exists $Y,Y'$ such that
$X \rel \mu Y'$, $Y \rel\nu Z$, $|Y\setminus Y'| =k$, $|Y' \setminus Y| = l$.
Subject to these conditions, assume $Y,Y'$ are chosen so that
the minimality condition \eqref{eq:minimality} holds.

Suppose there exists $y \in Y \setminus (Y' \cup Y_0)$.  
We claim that either (b) or (d) must hold.
Using the exchange axiom
on $Y \rel\nu Z$ and $Y_0 \rel\nu Z_0$ there exists either $z \in Z$
such that $Y - y \rel\nu Z - z$, or $y' \in Y_0 \setminus Y$ such
$Y - y + y' \rel\nu Z$.  In the former case we have
$X \rel \mu Y'$ and $Y - y \rel \nu Z - z$ which implies 
that (b) holds.
In the latter case, we cannot have $y' \in \smash{\overline Y}'$ 
as this would contradict the minimality assumption.  So 
$y' \in Y'$ and
$X \rel \mu Y'$, $Y - y + y' \rel \nu Z$ shows that (d) holds.
This proves the claim.

If there exists 
$y \in \overline Y \setminus (\smash{\overline Y}' \cup \overline{Y}_0)$, 
we use the argument above with 
$(\overline \mu, \overline \nu)$ in place of $(\mu, \nu)$ and
$(\overline{X}_0, \overline Y_0, \overline Z_0, \overline{X}, 
 \smash{\overline Y}', \overline Y, \overline Z)$ 
in place of $(X_0, Y_0, Z_0, X, Y', Y, Z)$. 
We conclude that either (c) or (d) must hold.
Similarly, if
there exists $y \in Y' \setminus (Y \cup Y_0)$, then applying
the same argument to 
$(\nu^\dagger, \mu^\dagger)$ and $(Z_0,Y_0,X_0, Z, Y, Y', X)$
we again find that (c) or (d)
holds. 
If there exists 
$y \in \smash{\overline Y}' \setminus (\overline{Y} \cup \overline {Y_0})$, 
then applying
the argument to 
$(\overline \nu^\dagger, \overline \mu^\dagger)$ and
$(\overline Z_0, \overline Y_0, \overline Z_0, 
\overline{Z},  \overline{Y},\smash{\overline Y}', \overline X)$, 
we find that
either (b) or (d) holds.  Here we are implicitly using the fact that 
the minimality condition \eqref{eq:minimality} is 
symmetric with respect to both $Y \leftrightarrow Y'$
and $(Y_0, Y' , Y) \leftrightarrow 
(\overline Y_0, \smash{\overline Y}', \overline{Y})$, so that the
argument is indeed valid in all of these variations.

Finally, if all of the sets 
$Y \setminus (Y' \cup Y_0)$, 
$\overline Y \setminus (\smash{\overline Y}' \cup \overline{Y}_0)$,
$Y' \setminus (Y \cup Y_0)$, 
$\smash{\overline Y}' \setminus (\overline{Y} \cup \overline{Y}_0)$
are empty, 
then $Y = Y'$, and so (a) holds.
\end{proof}

In the following arguments, we 
make frequent and implicit use 
Proposition~\ref{prop:bulletcirc}.
We write
$\covercomp{k,l} \in \Mor_\matroid(E,E)$ 
to mean a morphism which is 
a $\circ$-composition of $k$ factors of 
$\coverlt \in \Mor_\matroid(E,E)$ 
and $l$ factors of $\covergt \in \Mor_\matroid(E,E)$.
The set of points $E$ 
will be either $B$ or $C$, as can be determined from context.
We also use the notation 
$\covercomp{\type{\lambda}{\mu}} := \covercomp{k,l}$,
in the case where $(k,l) = \type{\lambda}{\mu}$.

\begin{lemma}
\label{lem:almost}
Suppose $\lambda \circ \mu \neq 0_{AC}$, $\mu \circ \nu \neq 0_{BD}$.
Then $(\lambda \circ \mu)\bullet \nu$ and $\lambda \bullet (\mu \circ \nu)$
have the same type $(k,l)$, and
\[
  (\lambda \circ \mu) \bullet_{k,l} \nu 
   = \lambda \bullet_{k,l} (\mu \circ \nu) 
\,.
\]
\end{lemma}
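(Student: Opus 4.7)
The plan is to proceed by induction on the total type $m = k + l$ of $\pi \bullet \nu$, where I set $\pi := \lambda \circ \mu$ and $\sigma := \mu \circ \nu$; both are nonzero by hypothesis. I use Theorem~\ref{thm:bullet}(i), established in Section~\ref{sec:dominant}, to know that both $\pi \bullet \nu$ and $\lambda \bullet \sigma$ have well-defined definite types.

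The base case $m = 0$ is immediate: $\lambda \circ \mu \circ \nu = \pi \circ \nu \neq 0_{AD}$ forces $\pi \bullet \nu = \pi \circ \nu = \lambda \circ \sigma = \lambda \bullet \sigma$, both at type $(0,0)$. For the inductive step $m \geq 1$, the condition $\lambda \circ \mu \circ \nu = 0_{AD}$ places us in the hypothesis of Lemma~\ref{lem:specialassoc}. Parts (i) and (ii) unconditionally yield $\pi \bullet_{1,0} \nu = \lambda \bullet_{1,0} \sigma$ and $\pi \bullet_{0,1} \nu = \lambda \bullet_{0,1} \sigma$; combined with the definite-type property of each side, these handle $m = 1$. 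When $m \geq 2$, both of these vanish and Lemma~\ref{lem:specialassoc}(iii) then gives $\pi \bullet_{1,1} \nu = \lambda \bullet_{1,1} \sigma$, settling $m = 2$ with type $(1,1)$.

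For $m \geq 2$ with type $(k, l)$ satisfying $\min(k, l) = 0$ (WLOG $k \geq 2$ and $l = 0$; the other case follows by applying the $\dagger$-functor), the plan is to reduce via Proposition~\ref{prop:bulletcirc} by absorbing cover factors into $\mu$. Setting $\mu' := \mu \circ \coverlt$, a direct comparison of the defining conditions shows that $(\pi \circ \coverlt) \bullet_{k-1,0} \nu = \pi \bullet_{k,0} \nu$, and then the definite-type property of $\pi \bullet \nu$ forces $(\pi \circ \coverlt) \bullet \nu$ to have definite type $(k-1, 0)$ and to equal $\pi \bullet \nu$. In the non-degenerate subcase $\mu' \circ \nu \neq 0_{BD}$, the triple $(\lambda, \mu', \nu)$ satisfies the hypotheses of the lemma with $\totaltype{\lambda \circ \mu'}{\nu} = k - 1 < m$, so the inductive hypothesis yields $(\lambda \circ \mu') \bullet \nu = \lambda \bullet (\mu' \circ \nu)$; chaining this with the analogous identity obtained by iterating the reduction (or symmetrically by absorbing a cover on the left-hand side) recovers $\pi \bullet \nu = \lambda \bullet \sigma$.

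The main obstacle is the degenerate subcase $\mu \circ \coverlt \circ \nu = 0_{BD}$ (and symmetrically $\mu \circ \covergt \circ \nu = 0_{BD}$ when $l \geq 2$), where the inductive reduction via $\mu'$ fails to produce a smaller valid triple. For this, the plan is to adapt the exchange-axiom-with-minimality argument from the proof of Lemma~\ref{lem:specialassoc} to arbitrary $(k, l)$: fix a reference triple $X_0 \rel\mu Y_0 \rel\nu Z_0$, and given a witness $W \rel\lambda X \rel\mu Y'$, $Y \rel\nu Z$ of $\pi \bullet_{k,l} \nu$ minimizing the quantity in \eqref{eq:minimality}, apply the exchange axiom iteratively on $\mu$ and on $\nu$ to transport all $k + l$ elements of $Y \symdif Y'$ across $\mu$, producing the desired witness of $\lambda \bullet_{k,l} \sigma$. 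The delicate part will be verifying, case by case, that the degeneracy hypothesis together with minimality and the definite-type property of both sides rules out every exchange outcome except the intended transport --- a direct generalization of the contradiction pattern in the proof of Lemma~\ref{lem:specialassoc}.
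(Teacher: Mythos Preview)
Your case analysis is incomplete, and the inductive step is not set up in a way that closes.

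First, you only treat the types $(0,0)$, $(1,0)$, $(0,1)$, $(1,1)$, and $(k,l)$ with $\min(k,l)=0$. Types such as $(2,1)$ or $(3,2)$ are never reached by your reduction. The paper handles all types uniformly by invoking Lemma~\ref{lem:steps}: since $(\lambda\circ\mu)\bullet_{k,l}\nu\neq 0$ forces $\mu\bullet_{k,l}\nu\neq 0$, and $\mu\circ\nu\neq 0$ by hypothesis, Lemma~\ref{lem:steps} guarantees that one of $\mu\bullet_{k-1,l}\nu$, $\mu\bullet_{k,l-1}\nu$, $\mu\bullet_{k-1,l-1}\nu$ is nonzero. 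This is exactly the tool that lets you absorb $\covercomp{k-1,l}$ (or $\covercomp{k,l-1}$, or $\covercomp{k-1,l-1}$) into $\mu$ and still have $\mu'\circ\nu\neq 0$, so that Lemma~\ref{lem:specialassoc} applies to the remaining single step. In particular, in your $l=0$ case Lemma~\ref{lem:steps} forces option (b) and shows your ``degenerate subcase'' $\mu\circ\coverlt\circ\nu=0$ cannot occur, so the direct exchange argument you sketch is unnecessary there.

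Second, even in your non-degenerate subcase, inducting on the triple $(\lambda,\mu',\nu)$ with $\mu'=\mu\circ\coverlt$ gives you $(\lambda\circ\mu')\bullet\nu=\lambda\bullet(\mu'\circ\nu)$, but $\mu'\circ\nu=\mu\circ\coverlt\circ\nu$ is not $\sigma=\mu\circ\nu$, and your ``chaining'' remark does not explain how to get back to $\lambda\bullet\sigma$. The paper's fix is to move the extra cover onto $\lambda$ rather than leave it inside $\mu$: after one application of Lemma~\ref{lem:specialassoc} it sets $\lambda'=\lambda\circ\coverlt$ and applies the inductive hypothesis to $(\lambda',\mu,\nu)$, where the right-hand side is $\lambda'\bullet(\mu\circ\nu)=\lambda'\bullet\sigma$, and then compares types of $\lambda'\bullet\sigma$ and $\lambda\bullet\sigma$ directly. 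You should reorganize your induction along these lines and use Lemma~\ref{lem:steps} to select which cover factors to absorb.
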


\begin{proof}
We assume, without loss of generality, that
\begin{equation}
\label{eq:assumption}
     \totaltype{\lambda \circ \mu}{\nu} \leq 
      \totaltype{\lambda}{\mu \circ \nu} 
\,.
\end{equation}
Let $(k,l) = \type{\lambda \circ \mu}{\nu}$.
By definition, $(\lambda \circ  \mu) \bullet_{k,l} \nu \neq 0_{AD}$,
and therefore $\mu \bullet_{k,l} \nu \neq 0_{BD}$.
We proceed by induction on $\totaltype{\lambda \circ \mu}{\nu} = k+l$.
If $(k,l) = (0,0)$ the result follows from
the associativity of $\circ$.  Otherwise, 
one of statements (b), (c) or (d) of 
Lemma~\ref{lem:steps} is true.

Suppose (b) is true, i.e.  $\mu \bullet_{k-1,l} \nu \neq  0_{BD}$.
Let $\mu' = \mu \circ \covercomp{k-1,l}$, and let 
$\lambda' = \lambda \circ \coverlt$.
Then $(\lambda \circ \mu) \bullet_{k,l} \nu  =
\lambda \circ \mu \circ \covercomp{k-1,l} \circ \coverlt \circ \nu 
=
(\lambda \circ \mu')\bullet_{1,0} \nu $ is non-zero.
In particular $\lambda \circ \mu' \neq 0_{BD}$, and
$\mu' \circ \nu \geq \mu \bullet_{k-1,l} \nu \neq  0_{BD}$.
Finally 
$\lambda \circ \mu' \circ \nu  = (\lambda \circ \mu) \circ 
\covercomp{k-1,l} \circ \nu = 0_{AC}$, since otherwise we would have
$\totaltype{\lambda \circ \mu}{\nu} \leq k-1+l$.
Thus $\lambda, \mu', \nu$ satisfy the hypotheses of 
Lemma~\ref{lem:specialassoc}.
Applying part (i) of the lemma, we deduce that
\[
   (\lambda \circ \mu) \bullet_{k,l} \nu = 
    (\lambda \circ \mu') \bullet_{1,0} \nu   = 
   \lambda \bullet_{1,0}  (\mu' \circ \nu) = 
   (\lambda'\circ \mu) \circ \covercomp{k-1,l} \circ \nu 
\,.
\]
Since the expression above is non-zero,
$\totaltype{\lambda' \circ \mu}{\nu} \leq k+l-1$.
By induction, $\totaltype{\lambda'}{\mu \circ \nu} = 
\totaltype{\lambda' \circ \mu}{\nu}$, and
$(\lambda'\circ \mu) \bullet \nu$ has the same type as
$\lambda' \bullet (\mu \circ \nu)$.
Since $\lambda' = \lambda \circ \eta$, we must have
$\type{\lambda}{\mu\circ\nu} \leq 
\type{\lambda'}{\mu\circ\nu} + (1,0)$.
Therefore
$\totaltype{\lambda}{\mu\circ\nu} \leq 
\totaltype{\lambda'}{\mu\circ\nu}+1 \leq k+l
 = \totaltype{\lambda \circ \mu}{\nu}$. 
By \eqref{eq:assumption}, all of these inequalities must be equalities.
Therefore, 
$(\lambda'\circ \mu) \bullet \nu $ and
$\lambda' \bullet (\mu \circ \nu) $ both have type $(k-1,l)$,
and
$(\lambda'\circ \mu) \circ \covercomp{k-1,l} \circ \nu 
   = (\lambda'\circ \mu) \bullet_{k-1,l} \nu $.
By induction,
\[
   (\lambda \circ \mu) \bullet_{k,l} \nu 
   = 
   (\lambda'\circ \mu) \bullet_{k-1,l} \nu 
   = 
     \lambda' \bullet_{k-1,l} (\mu \circ \nu )
   = 
  \lambda \circ \coverlt \circ \covercomp{k-1,l} \circ (\mu \circ \nu ) 
\,.
\]
Since the expression above is non-zero, and 
$\totaltype{\lambda}{\mu \circ \nu} = k+l$,
we conclude that $(k,l)$ is the type of
$\lambda \bullet (\mu \circ \nu)$, and 
$(\lambda \circ \mu)\bullet_{k,l} \nu  = 
\lambda \bullet_{k,l} (\mu \circ \nu ) $.

Similarly if (c) is true, we consider $\mu' = \mu \circ \covercomp{k,l-1}$, 
$\lambda' = \lambda \circ \covergt$ and proceed as above, this time
using Lemma~\ref{lem:specialassoc}(ii).

If nether (b) nor (c) is true, then (d) must be true.
In this case we consider $\mu' = \mu \circ \covercomp{k-1,l-1}$,
$\lambda' = \lambda \circ \covercomp{1,1}$.
Since neither (b) nor (c) is true, $\mu' \bullet_{1,0} \nu = 
\mu' \bullet_{0,1} \nu = 0_{BD}$,
which implies 
$\lambda \circ \mu' \circ \coverlt \circ \nu 
= \lambda \circ \mu' \circ \covergt \circ \nu 
= 0_{AD}$,
and hence $(\lambda \circ \mu')\bullet_{1,0} \nu  = 
(\lambda \circ \mu') \bullet_{0,1} \nu  = 0_{AD}$.
Thus we can apply Lemma~\ref{lem:specialassoc}(iii), and proceed
as in the other two cases.
\end{proof}

\begin{proof}[Proof of Theorem~\ref{thm:bullet}(ii) and (iii)]
Let $\lambda' = \lambda \circ \covercomp{\type{\lambda}{\mu}}$,
and $\nu' = \covercomp{\type{\mu}{\nu}} \circ \nu$.
Then $\lambda'\circ \mu = \lambda \bullet \mu \neq 0_{AC}$ and 
$\mu \circ \nu' = \mu \bullet \nu \neq 0_{BD}$.  By Lemma~\ref{lem:almost},
\[
(\lambda \bullet \mu) \bullet \nu 
= (\lambda'\circ \mu) \bullet_{k,l} \nu'
= \lambda' \bullet_{k,l} (\mu \circ \nu') 
= \lambda \bullet (\mu \bullet \nu)
\,,
\]
where $(k,l)$ is equal to each of the following:
\begin{itemize}
\item $\type{\lambda'\circ \mu}{\nu'}$
\item $\type{\lambda'}{\mu \circ \nu'}$
\item 
$\type{\lambda \bullet \mu}{\nu}
  - \type{\mu}{\nu}$
\item 
$\type{\lambda}{\mu \bullet \nu}
  - \type{\lambda}{\mu}$.
\end{itemize}
Thus $\bullet$ is associative, and additivity of types follows from
the equality of the last two quantities above.
\end{proof}

%%%%%%%%%%%%%%%%%%%%%%%%%%%%%%%%%%%%%%%%%%%%%%%%%%%%%
%%%%%%%%%%%%%%%%%%%%%%%%%%%%%%%%%%%%%%%%%%%%%%%%%%%%%
%%%%%%%%%%%%%%%%%%%%%%%%%%%%%%%%%%%%%%%%%%%%%%%%%%%%%

\bigskip

\footnotesize%

\noindent
\textsc{K. Purbhoo, Combinatorics and Optimization Department, 
       University of Waterloo,  
       Waterloo, ON, N2L 3G1, Canada.} \texttt{kpurbhoo@uwaterloo.ca}.


\begin{thebibliography}{00}

\bibitem{BB1}
J. Borcea and P. Br\"and\'en,
\emph{The Lee--Yang and P\'olya--Schur programs I: linear operators 
preserving stability},
Invent. Math. \textbf{177} (2009), 541--569.

\bibitem{BH}
P. Br\"and\'en and J. Huh,
\emph{Lorentzian polynomials}, 
Ann. Math., \textbf{192} (2020), 821--891.

\bibitem{COSW}
Y.-B. Choe, J.\,G. Oxley, A.\,D. Sokal, and D.\,G. Wagner,
\emph{Homogeneous multivariate polynomials with the half-plane property},
Adv. in Appl. Math. \textbf{32} (2004), 88--187.

\bibitem{Edmonds}
J. Edmonds, 
\emph{Submodular functions, matroids, and certain polyhedra}, 
in ``Combinatorial Structures and Their Applications'' 
(R. Guy, H. Hanani, N. Sauer and J. Schonheim, Eds.)
Gordon and Breach, New York, 1970, pp. 69 -- 87.

\bibitem{GV} I. Gessel and G. Viennot, 
\emph{Binomial determinants, paths, and hook length formulae},
Adv. in Math., \textbf{58} (1985),  300--321.

\bibitem{HP} C. Heunen and {V}. Patta, 
\emph{The Category of Matroids}, 
Appl. Categor. Struct. \textbf{26} (2018), 205--237.

\bibitem{Kung} J.\,{P}.\,S. Kung,
\emph{Bimatroids and invariants},
Adv. in Math., \textbf{30} (1978), 238--249.

\bibitem{Lind} B. Lindstr\"om. 
\emph{On the vector representations of induced matroids}, 
Bull. London Math. Soc., \textbf{5} (1972), 85--90.

\bibitem{MacLane} S. Mac Lane,
\emph{Categories for the working mathematician} (2nd ed.),
Springer-Verlag, New York, (1998).

\bibitem{Murota}
K. Murota, 
\emph{Discrete convex analysis},
SIAM Monographs on Discrete Mathematics and Applications,
Society for Industrial and Applied Mathematics (SIAM), 
Philadelphia, 2003.

\bibitem{Nelson} {P}. Nelson,
\emph{Almost all matroids are non-representable},
Bull. London Math. Soc. \textbf{50} (2018), 245--248.

\bibitem{nlab}
nLab authors, \emph{Category} (website),
{\small\url{https://ncatlab.org/nlab/show/category}}.

\bibitem{Oxley}
J.\,G. Oxley,
\emph{Matroid Theory},
Oxford University Press, second edition, 2011.

\bibitem{Perfect}
H. Perfect, 
\emph{Applications of Menger's graph theorem}, 
J. Math. Anal. Appl. \textbf{22} (1968), 96--111.

\bibitem{Postnikov-networks}
A. Postnikov,
\emph{Total positivity, Grassmannians, and networks}, 
arXiv:0609764, (2006).

\bibitem{Postnikov-perm}
A. Postnikov,
\emph{Permutohedra, associahedra, and beyond}, 
Int. Math. Res. Not. (2009), 1026--1106.


\bibitem{Pur-TNN}
K. Purbhoo,
\emph{Total Nonnegativity and Stable Polynomials},
Canad. Math. Bull. \textbf{61} (2018), 836--847.


\bibitem{Pym}
J.\,S. Pym,
\emph{The Linking of Sets in Graphs},
J. London Math. Soc. \textbf{44} (1969), 542--550.



\bibitem{Schrijver}
A. Schrijver, 
\emph{Matroids and linking systems}, 
J. Combin. Theory Ser. B \textbf{26} (1979) 349--369.

\bibitem{Talaska} 
K. Talaska, \emph{Determinants of weighted path matrices},
arXiv:1202.3128, (2012).

\bibitem{Wagner-survey}
D.\,G. Wagner,
\emph{Multivariate stable polynomials: Theory and applications},
Bull. Amer. Math. Soc. \textbf{48} (2011), 53--84.

\end{thebibliography}
\end{document}